\documentclass[11pt]{amsart}
\usepackage{amsmath,amssymb,amsthm,mathtools}
\usepackage[margin=1.1in]{geometry}
\usepackage[hidelinks]{hyperref}

\newtheorem{theoremL}{Theorem}
\newenvironment{lthm}[1]{\renewcommand{\thetheoremL}{#1}\begin{theoremL}}{\end{theoremL}}
\newtheorem{theorem}{Theorem}[section]
\newtheorem{prop}[theorem]{Proposition}
\newtheorem{lemma}[theorem]{Lemma}
\newtheorem{cor}[theorem]{Corollary}
\newtheorem{conj}[theorem]{Conjecture}
\theoremstyle{definition}

\theoremstyle{remark}
\newtheorem{remark}[theorem]{Remark}
\numberwithin{equation}{section}

\newcommand{\ST}{\mu_{\mathrm{ST}}}
\newcommand{\F}{\mathbb{F}}
\newcommand{\R}{\mathbb{R}}
\newcommand{\Kl}{\mathrm{Kl}}
\newcommand{\dist}{\operatorname{dist}}
\DeclareMathOperator{\tr}{tr}
\DeclareMathOperator{\Sw}{Sw}
\DeclareMathOperator{\Sym}{Sym}
\DeclareMathOperator{\Haar}{Haar}

\title[Wasserstein rates for Kloosterman sums]{Wasserstein rates of all orders for the\\ Sato--Tate law of Kloosterman sums}
\author{Amir M. Rahimi}
\address{School of Mathematics, Institute for Research in Fundamental Sciences (IPM), P.O.~Box 19395-5746, Tehran, Iran}
\email{amrahimi@ipm.ir}
\subjclass[2020]{11L05; 11K38, 49Q22, 60B10}
\keywords{Kloosterman sums, Sato--Tate law, Wasserstein distance, quantitative equidistribution, discrepancy, quantile coupling}

\begin{document}

\begin{abstract}
We answer, in the trace metric, a question of Kowalski and Untrau on Wasserstein distances of all orders between the distribution
$\mu_q$ of the normalized Kloosterman sums $\Kl(a;q)$, $a\in\F_q^\times$, and the Sato--Tate law $\ST$: we prove bounds for every
order $s$ and show that their exponents are optimal for Katz-type moment bounds. In the trace metric we prove
$W_s(\mu_q,\ST)\ll_s q^{-1/3}$ for $1\le s<3$, $W_3\ll q^{-1/3}(\log q)^{1/3}$, $W_s\ll_s q^{-1/4-1/(4s)}$ for $3<s<\infty$ and
$W_\infty\ll q^{-1/4}$. In the angle metric we obtain bounds of every order, including $W_2\le7q^{-1/3}$ and $W_\infty\ll q^{-1/6}$.

These estimates follow from a general theorem for probability measures satisfying Katz-type Chebyshev moment bounds. Its proof
combines an edge-sensitive discrepancy estimate with a new level-set bound for the distribution function and a quantile-displacement
argument. We construct measures showing that the resulting exponents cannot in general be improved using moment bounds of this type
alone, in the trace metric and in the angle metric for $s\le2$ and $s\ge6$ (up to logarithms); these constructions do not assert optimality for the Kloosterman measures themselves. A sharp comparison between Wasserstein
distances and the Kolmogorov distance for the Sato--Tate law exhibits the same transition at $s=3$. Finally, using the exact third
moment, we prove $W_1(\mu_q,\ST)>q^{-1/2}/10$ when $3\nmid q$.
\end{abstract}

\maketitle

\section{Introduction}

This paper answers, in the trace metric, a question of Kowalski and Untrau \cite[\S1.3, Question~(3)]{KU} on Wasserstein distances of
all orders between the distribution of Kloosterman sums and the Sato--Tate law: we prove bounds for every order and show that their
exponents are optimal for Katz-type moment bounds.

Let $\F_q$ be a finite field, $\psi$ a nontrivial additive character of $\F_q$, and
\[
\Kl(a;q)=\frac1{\sqrt q}\sum_{x\in\F_q^\times}\psi\bigl(x+ax^{-1}\bigr)\in[-2,2],\qquad
\mu_q=\frac1{q-1}\sum_{a\in\F_q^\times}\delta_{\Kl(a;q)} .
\]
By Katz's equidistribution theorem \cite[Ch.~13]{Katz}, $\mu_q$ converges to the Sato--Tate law $\ST$, of density
$\varrho(x)=\frac1{2\pi}\sqrt{4-x^2}$ on $[-2,2]$. Quantitative forms of this convergence are known: Niederreiter \cite{Nied} proved
$D(\mu_q)<10q^{-1/4}$ for the Kolmogorov distance, and Kowalski and Untrau \cite[Example~4.5]{KU} proved $W_1(\mu_q,\ST)\ll q^{-1/3}$;
their Theorem~4.4 gives the same bound for the angles $\theta_a\in[0,\pi]$, $\Kl(a;q)=2\cos\theta_a$, by Lemma~\ref{lem:angle}. They asked
\cite[\S1.3, Question~(3)]{KU} for bounds for $W_s$, $s\ne1$, better than $W_s\le\operatorname{diam}^{1-1/s}W_1^{1/s}$.

The purpose of this paper is to determine, as far as the available moment information permits, the Wasserstein rates of all orders
for these measures. The starting point is Katz's bound $|\widehat{\mu_q}(k)|\le(k+1)q^{-1/2}$, $k\ge1$, for the Chebyshev moments
(Proposition~\ref{prop:Katz}). We first prove a general theorem converting such moment bounds into Wasserstein estimates
(Theorem~\ref{thm:C}), and then apply it to Kloosterman sums. The resulting rates depend on the order $s$ and differ between the
trace and angle metrics.

We consider two metrics: the \emph{trace metric}, for probability measures on $[-2,2]$, and the \emph{angle metric}, for the
corresponding measures $\mu_q^\theta$, $\ST^\theta$ on $[0,\pi]$. By Lemma~\ref{lem:angle} the latter is also the Wasserstein metric on
central probability measures on $\mathrm{SU}_2$ with its bi-invariant geodesic distance. Since $|2\cos\theta-2\cos\theta'|\le2|\theta-\theta'|$,
angle estimates imply trace estimates. The converse comparison is more delicate near the endpoints: the Sato--Tate density vanishes
there to square-root order, and an angular displacement $\Delta\theta$ at angle $\theta$ near $0$ or $\pi$ moves the trace only by
about $\theta\,\Delta\theta$. This endpoint behaviour is responsible for the different exponents.

\begin{lthm}{A}\label{thm:A}
For every finite field $\F_q$ and every nontrivial additive character $\psi$:
\begin{enumerate}
\item In the trace metric,
\[
W_s(\mu_q,\ST)\ll_s\begin{cases}q^{-1/3},&1\le s<3,\\ q^{-1/3}(\log q)^{1/3},&s=3,\\ q^{-\frac14-\frac1{4s}},&3<s<\infty,\\ q^{-1/4},&s=\infty,\end{cases}
\]
and $W_2(\mu_q,\ST)\le12\,q^{-1/3}$.
\item In the angle metric,
\[
W_s(\mu_q^\theta,\ST^\theta)\ll_s\begin{cases}q^{-1/3},&1\le s\le2,\\ q^{-\frac5{24}-\frac1{4s}}(\log q)^{\frac{s-2}{4s}},&2\le s\le6,\\ q^{-\frac16-\frac1{2s}},&6<s<\infty,\\ q^{-1/6},&s=\infty,\end{cases}
\]
and $W_2(\mu_q^\theta,\ST^\theta)\le7\,q^{-1/3}$.
\item There are absolute constants $C,c>0$ such that at least $cq^{1/2}$ elements $a\in\F_q^\times$ satisfy
$\Kl(a;q)\ge2-Cq^{-1/3}$, and at least $cq^{1/2}$ satisfy $\Kl(a;q)\le-2+Cq^{-1/3}$.
\item If $3\nmid q$, then $W_1(\mu_q,\ST)>\frac1{10}q^{-1/2}$, $W_1(\mu_q^\theta,\ST^\theta)>\frac1{20}q^{-1/2}$ and $D(\mu_q)>\frac1{13}q^{-1/2}$.
\end{enumerate}
\end{lthm}

Part (3) improves the bound $\max_a\Kl(a;p)\ge2-cp^{-2/15}$ of \cite[remark after Prop.~4.6]{KU}; Niederreiter's bound already
gives $\gg q^{3/4}$ values with $\Kl(a;q)\ge2-Cq^{-1/6}$, and the passage to $q^{-1/3}$ is an immediate consequence of the edge
estimate of Feldheim and Sodin (Proposition~\ref{prop:local}). Parts (1)--(3) use only Katz's
moment bounds, whereas (4) uses the exact third moment: (4) is an obstruction for $\mu_q$ itself and shows that in the trace metric
$q^{-1/2}$, the rate suggested by numerical experiments (Remark~\ref{rem:numerics}), would be optimal when $3\nmid q$. The table
compares (1) and (2) with previous results. It lists upper bounds for the distance between $\mu_q$ and $\ST$; implied constants
depend on $s$, ``(log)'' stands for $(\log q)^{(s-2)/(4s)}$, and $\dagger$ marks the bounds that are optimal for Katz-type moment bounds
by Theorem~\ref{thm:D}.
\begin{center}\small\setlength{\tabcolsep}{3.5pt}
\begin{tabular}{l|ll|ll}
&\multicolumn{2}{c|}{trace metric}&\multicolumn{2}{c}{angle metric ($\mathrm{SU}_2$)}\\
&prev./obtainable&Theorem~\ref{thm:A}&prev./obtainable&Theorem~\ref{thm:A}\\ \hline
$W_1$&$q^{-1/3}$ \cite{KU}&$q^{-1/3}$ $\dagger$&$q^{-1/3}$ \cite{KU}&$q^{-1/3}$ $\dagger$\\
$W_s$, $1<s\le2$&$q^{-1/3}$ \cite{BC}&$q^{-1/3}$ $\dagger$&$q^{-1/3}$ \cite{BC}&$q^{-1/3}$ $\dagger$\\
$W_s$, $2<s<3$&$q^{-s/(6(s-1))}$ \cite{BC}&$q^{-1/3}$ $\dagger$&$q^{-s/(6(s-1))}$ \cite{BC}&$q^{-5/24-1/(4s)}$ (log)\\
$W_3$&$q^{-1/4}$ \cite{BC}&$q^{-1/3}(\log q)^{1/3}$&$q^{-1/4}$ \cite{BC}&$q^{-7/24}(\log q)^{1/12}$\\
$W_s$, $3<s\le6$&$q^{-1/6-1/(4s)}$ ($*$)&$q^{-1/4-1/(4s)}$ $\dagger$&$q^{-s/(6(s-1))}$ \cite{BC}&$q^{-5/24-1/(4s)}$ (log)\\
$W_s$, $6<s<\infty$&$q^{-1/6-1/(4s)}$ ($*$)&$q^{-1/4-1/(4s)}$ $\dagger$&$q^{-s/(6(s-1))}$ \cite{BC}&$q^{-1/6-1/(2s)}$ $\dagger$\\
$W_\infty$&$q^{-1/6}$ ($*$)&$q^{-1/4}$ $\dagger$&$q^{-1/6}$ ($**$)&$q^{-1/6}$ $\dagger$
\end{tabular}
\end{center}
Here \cite{BC} refers to the smoothing inequality \cite[Thm.~4]{BC} on compact Lie groups (Remark~\ref{rem:BC}). ($*$) is not in the literature but is obtainable from
Niederreiter's bound and Theorem~\ref{thm:B}; for $2<s<3$ it gives $q^{-1/4}$, which is weaker than \cite{BC}. ($**$) is likewise not in the literature but is obtainable as follows. Borda and Cuenin \cite[Thm.~8]{BC} give a $W_\infty$ smoothing inequality on
$S^d$, $d\ge3$, hence on $\mathrm{SU}_2$; it requires a lower bound for the mass of every ball of some radius $r$ and yields bounds of
order at least $r$. For $\mu_q$ such lower bounds hold for $r\asymp q^{-1/6}$ by Proposition~\ref{prop:local}, and in this way one
obtains $q^{-1/6}$.

Parts (1)--(3) are deduced from the following general result.

\begin{lthm}{B}\label{thm:C}
Let $\mu$ be a probability measure on $[-2,2]$ with $|\widehat\mu(k)|\le\eta(k+1)$ for all $k\ge1$, where
$\widehat\mu(k)=\int U_k\,d\mu$, $U_k(2\cos\theta)=\sin((k+1)\theta)/\sin\theta$, and $\eta>0$ (for $\eta>1$ all bounds below are trivial). Put $\ell_\eta=\log(2+1/\eta)$. Then
$D(\mu)\ll\eta^{1/2}$, with $D(\mu)\le19\eta^{1/2}$ if $\mu$ is the empirical measure of a finite multiset, and
\[
W_s(\mu,\ST)\ll_s\begin{cases}\eta^{2/3},&1\le s<3,\\ \eta^{2/3}\ell_\eta^{1/3},&s=3,\\ \eta^{\frac12+\frac1{2s}},&3<s<\infty,\\ \eta^{1/2},&s=\infty,\end{cases}
\qquad
W_s(\mu^\theta,\ST^\theta)\ll_s\begin{cases}\eta^{2/3},&1\le s\le2,\\ \eta^{\frac5{12}+\frac1{2s}}\ell_\eta^{\frac{s-2}{4s}},&2\le s\le6,\\ \eta^{\frac13+\frac1s},&6<s<\infty,\\ \eta^{1/3},&s=\infty.\end{cases}
\]
Moreover $W_2(\mu^\theta,\ST^\theta)\le(\pi^2+3.3)\eta^{2/3}$.
\end{lthm}

Such moment bounds hold for one-parameter families over open subsets of $\mathbf P^1$ with $\mathrm{SL}_2$ monodromy
(Corollary~\ref{cor:SL2}). The angle bounds for $2\le s\le6$ are obtained by interpolating between $s=2$ and $s=6$, which is possible
because the quantile coupling \eqref{eq:quantile} is optimal for every $s$ simultaneously.

\begin{lthm}{C}\label{thm:D}
Let $0<\delta\le10^{-6}$ and let $L\ge42\delta^{-2}$ be an integer. There are empirical measures $\mu_1,\mu_2,\mu_3$ of $L$-point
multisets in $[-2,2]$ with $|\widehat{\mu_i}(k)|\le(k+1)\delta$ for all $k\ge1$ and
\begin{gather*}
W_1(\mu_1,\ST)\ge\tfrac16\delta^{2/3},\qquad D(\mu_2)\ge0.7\,\delta^{1/2},\qquad W_\infty(\mu_2,\ST)\ge2.2\,\delta^{1/2},\\
W_s(\mu_2,\ST)\ge\tfrac1{100}\delta^{\frac12+\frac1{2s}}\ (s\ge2),\\ W_\infty(\mu_3^\theta,\ST^\theta)\ge1.4\,\delta^{1/3},\qquad
W_s(\mu_3^\theta,\ST^\theta)\ge\tfrac1{25}\delta^{\frac13+\frac1s}\ (s\ge1).
\end{gather*}
\end{lthm}

With $L=q-1$ and $\delta=7q^{-1/2}$ (for $q\ge5\cdot10^{13}$), this gives multisets of the same size as the support of $\mu_q$, with
moments bounded by $7(k+1)q^{-1/2}$. Thus the exponents in Theorem~\ref{thm:C} are optimal for $D$ and for every $W_s$ in the trace
metric (up to the logarithm at $s=3$), and in the angle metric for $s\le2$, $s\ge6$ (up to $\ell_\eta^{1/6}$ at $s=6$) and $s=\infty$. This concerns what moment bounds of
Katz type imply, not $\mu_q$ itself. Any improvement of Theorem~\ref{thm:A}(1) therefore needs arithmetic information on the moments;
\S\ref{sec:model} discusses what the numerical evidence suggests.

The $W_\infty$ bounds do not follow from the Kolmogorov bound. The next theorem is sharp in every regime; by its converse part,
$D(\mu_q)\ll q^{-1/4}$ alone gives only $W_\infty(\mu_q,\ST)\ll q^{-1/6}$.

\begin{lthm}{D}\label{thm:B}
Let $\mu$ be a probability measure on $[-2,2]$ with $0<D=D(\mu)\le\frac18$. Then
\[
W_s(\mu,\ST)\ll_s\begin{cases}D,&1\le s<3,\\ D(\log(1/D))^{1/3},&s=3,\\ D^{\frac23+\frac1s},&3<s<\infty,\end{cases}
\qquad W_\infty(\mu,\ST)\ll D^{2/3}.
\]
Conversely, for every $0<D\le\frac18$ there is $\mu$ with $D(\mu)=D$ for which each of these bounds is attained up to a
constant depending only on $s$.
\end{lthm}

The $\ST$-mass within distance $u$ of $\pm2$ is $\asymp u^{3/2}$, so an error $D$ in the distribution function can move quantiles
near the edges by $D^{2/3}$. For $s<3$ the bulk dominates and $W_s\asymp D$; at $s=3$ all distances from the edge contribute
equally, giving the logarithm; for $s>3$ the edge displacements dominate. For densities vanishing to order $\beta$ the threshold is
$1+1/\beta$ (Remark~\ref{rem:beta}).

\subsection*{Main contributions}
In the trace metric, Theorems~\ref{thm:A} and \ref{thm:D} answer the question of Kowalski and Untrau as far as moment bounds allow
(up to a logarithm at $s=3$). Our main result is Theorem~\ref{thm:C}, which converts Katz-type bounds for Chebyshev moments into Wasserstein
estimates of all orders. Its proof introduces two ingredients: a level-set estimate for the distribution function (Lemma~\ref{lem:level}), based on
positivity and a one-sided Lipschitz bound, and a quantile-displacement estimate (Lemma~\ref{lem:disp}) that converts local control
of the distribution function into transport bounds. Combined with the edge-sensitive Erd\H{o}s--Tur\'an estimate of Feldheim and Sodin
\cite{FS} (Proposition~\ref{prop:local}), these arguments yield
the trace-metric bounds, including the range $2<s\le3$, which is not obtained by interpolating between $W_1$ and $W_\infty$ bounds.

Theorem~\ref{thm:D} gives measures satisfying the same moment constraints for which the exponents of Theorem~\ref{thm:C} are attained in
the stated ranges; the Rudin--Shapiro construction is essential for the trace-metric lower bounds with $s\le3$. Theorem~\ref{thm:B}
gives the sharp relation between the Kolmogorov distance and Wasserstein distances for the Sato--Tate law. We also give a direct $W_2$
smoothing argument on $\mathrm{SU}_2$ with an explicit constant (Theorem~\ref{thm:W2}), and
lower bounds for the Kloosterman measures themselves from their exact third moment.

\subsection*{Related work}
Quantitative Wasserstein estimates in arithmetic settings have recently been studied by Kowalski and Untrau \cite{KU}, Borda and
Cuenin \cite{BC} and Borda and Giannoni \cite{BG}. Borda and Cuenin give general smoothing inequalities on compact spaces and Lie groups,
from which some of the rates used here can also be recovered (Remark~\ref{rem:BC}); for $W_2$ smoothing on tori see also \cite{BrS}. Our
contribution is to exploit the specific structure of the Sato--Tate law and of the Chebyshev moment bounds to obtain the full range of
trace and angle estimates. Effective Sato--Tate laws via Erd\H{o}s--Tur\'an inequalities are proved in \cite{Nied,MS,Michel,MM}, and
Erd\H{o}s--Tur\'an inequalities on compact Lie groups in \cite{Ros,FLX}. For the quantile representation of $W_s$ on the line we use
\cite{BL,Villani}.

\subsection*{Organization}
\S\ref{sec:prelim} contains preliminaries, \S\ref{sec:quantiles} proves Theorem~\ref{thm:B}, \S\ref{sec:W2} the $W_2$ inequality and
\S\ref{sec:local} the bounds of Theorem~\ref{thm:C}. Theorem~\ref{thm:D} is proved in \S\ref{sec:sharp} and Theorem~\ref{thm:A} in
\S\ref{sec:proofs}. \S\ref{sec:model} discusses a random model, and \S\ref{sec:questions} lists open questions.

\section{Preliminaries}\label{sec:prelim}

\subsection*{The quantile coupling}
For a probability measure $\lambda$ on $\R$ write $F_\lambda$ for its distribution function and $q_\lambda(t)=\inf\{x:F_\lambda(x)\ge t\}$ for
its quantile function. For compactly supported probability measures $\lambda,\nu$ on $\R$ and $1\le s<\infty$
\cite[Thm.~2.18]{Villani}, \cite{BL},
\begin{equation}\label{eq:quantile}
W_s(\lambda,\nu)^s=\int_0^1|q_\lambda-q_\nu|^s\,dt,\qquad W_\infty(\lambda,\nu)=\operatorname*{ess\,sup}_{0<t<1}|q_\lambda-q_\nu|.
\end{equation}
Moreover $W_\infty(\lambda,\nu)\le w$ as soon as $F_\lambda(x)\le F_\nu(x+w)$ and $F_\nu(x)\le F_\lambda(x+w)$ for all $x$:
if $x=q_\lambda(t)$ then $F_\nu(x+w)\ge F_\lambda(x)\ge t$, so $q_\nu(t)\le q_\lambda(t)+w$, and symmetrically.

\begin{cor}\label{cor:interp}
For compactly supported probability measures $\lambda,\nu$ on $\R$ and $1\le r\le s<\infty$,
$W_s(\lambda,\nu)\le W_\infty(\lambda,\nu)^{1-r/s}W_r(\lambda,\nu)^{r/s}$.
\end{cor}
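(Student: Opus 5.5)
The plan is to work directly with the quantile representation \eqref{eq:quantile}, under which the inequality becomes a pointwise interpolation followed by integration over $(0,1)$. Put $g(t)=|q_\lambda(t)-q_\nu(t)|$ on $(0,1)$. Both measures are compactly supported, so $q_\lambda$ and $q_\nu$ are bounded on $(0,1)$ and $g$ is a bounded measurable function. By \eqref{eq:quantile},
\[
W_s(\lambda,\nu)^s=\int_0^1 g^s\,dt,\qquad W_r(\lambda,\nu)^r=\int_0^1 g^r\,dt,\qquad W_\infty(\lambda,\nu)=\operatorname*{ess\,sup}_{0<t<1} g .
\]
The key point is that one and the same function $g$ represents all three distances. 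This is what makes the argument elementary: for general metric spaces the optimal couplings for different orders could differ, but on $\R$ the quantile coupling is optimal for every $s$ at once.

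Write $M=W_\infty(\lambda,\nu)$. Since $s-r\ge0$, we have $g(t)^s=g(t)^{s-r}g(t)^r\le M^{s-r}g(t)^r$ for almost every $t$. Integrating over $(0,1)$ gives
\[
W_s^s\le M^{s-r}W_r^r .
\]
Taking $s$-th roots yields $W_s\le M^{1-r/s}W_r^{r/s}$, which is the claim.

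A few degenerate cases need a line each. If $r=s$ the statement is trivial. If $M=0$, then $g=0$ almost everywhere and both sides vanish; the convention $0^0=1$ is needed only when $r=s$, which is already covered. Finiteness of all quantities follows from compact support.

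There is no real obstacle here. The only step requiring care is to use the quantile formula for each of $W_r$, $W_s$ and $W_\infty$ simultaneously, rather than interpolating with separate optimal couplings. With separate couplings the simple pointwise bound would not be available.
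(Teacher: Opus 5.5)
Your proposal is correct and follows the paper's proof exactly: both use the quantile representation \eqref{eq:quantile} to write all three distances in terms of the single function $|q_\lambda-q_\nu|$, bound $|q_\lambda-q_\nu|^{s-r}$ by $W_\infty^{s-r}$ pointwise, and integrate. Your handling of the degenerate cases $r=s$ and $W_\infty=0$ is a harmless addition that the paper leaves implicit.
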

\begin{proof}
By \eqref{eq:quantile}, $W_s^s=\int_0^1|q_\lambda-q_\nu|^{s-r}|q_\lambda-q_\nu|^r\,dt\le W_\infty^{s-r}W_r^r$.
\end{proof}

\subsection*{Chebyshev polynomials and $\mathrm{SU}_2$}

With $x=2\cos\theta$, $\varrho(x)\,dx=\frac2\pi\sin^2\theta\,d\theta$. We have $U_k(\tr g)=\chi_k(g)$, the character of $\Sym^k$ of
$\mathrm{SU}_2$; $\ST$ is the image of the Haar probability under the trace, and $\int U_jU_k\,d\ST=\delta_{jk}$. For a probability $\mu$ on $[-2,2]$ write
$\widehat\mu(k)=\int U_k\,d\mu$, $G$ for the distribution function of $\ST$, $\mathfrak q=G^{-1}$, and
$D(\mu)=\sup_x|F_\mu(x)-G(x)|$. Write $\mu^\theta$ for the image of $\mu$ under $x\mapsto\arccos(x/2)\in[0,\pi]$; thus
$\ST^\theta$ has density $\frac2\pi\sin^2\theta$ and distribution function
$\Psi(\tau)=\ST^\theta([0,\tau])=(\tau-\sin\tau\cos\tau)/\pi$. Finally, $\mu^-$ denotes the image of $\mu$ under
$x\mapsto-x$; since $U_k(-x)=(-1)^kU_k(x)$, $|\widehat{\mu^-}(k)|=|\widehat\mu(k)|$.

We identify $\mathrm{SU}_2$ with the unit quaternions, with the geodesic metric $\dist$ of $S^3$. It is bi-invariant,
an element with eigenvalues $e^{\pm i\theta}$, $0\le\theta\le\pi$, is at distance $\theta$ from $1$, and
$\tr g=2\operatorname{Re}g$, so $|\tr g-\tr g'|\le2|g-g'|\le2\dist(g,g')$. A probability on $\mathrm{SU}_2$ is
\emph{central} if it is invariant under conjugation; it is determined by the law of $\theta(g)=\dist(1,g)$.

\begin{lemma}\label{lem:angle}
For central probabilities $\nu_1,\nu_2$ on $\mathrm{SU}_2$ and $1\le s\le\infty$,
$W_s(\nu_1,\nu_2)=W_s(\theta_*\nu_1,\theta_*\nu_2)$, the right side computed on $[0,\pi]$.
\end{lemma}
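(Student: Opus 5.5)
We prove the two inequalities $W_s(\nu_1,\nu_2)\ge W_s(\theta_*\nu_1,\theta_*\nu_2)$ and $\le$ separately, for $1\le s\le\infty$.

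\emph{Lower bound.} The map $\theta(g)=\dist(1,g)$ is $1$-Lipschitz for $\dist$, by the triangle inequality: $|\dist(1,g)-\dist(1,g')|\le\dist(g,g')$. Given any coupling $\pi$ of $\nu_1,\nu_2$, its image under $(g,g')\mapsto(\theta(g),\theta(g'))$ is a coupling of $\theta_*\nu_1,\theta_*\nu_2$. Its cost is at most the cost of $\pi$, both in $L^s$ and in the essential supremum. Taking the infimum over $\pi$ gives $W_s(\theta_*\nu_1,\theta_*\nu_2)\le W_s(\nu_1,\nu_2)$.

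\emph{Upper bound.} We lift an optimal coupling on $[0,\pi]$ to $\mathrm{SU}_2$. Fix the maximal torus $T=\{e^{i\phi}\}$, i.e. $\cos\phi+i\sin\phi$ as a quaternion, and put $t(\phi)=e^{i\phi}$. Then $\dist(t(\phi),t(\phi'))=|\phi-\phi'|$ for $\phi,\phi'\in[0,\pi]$, since $T$ is a great circle in $S^3$ and $|\phi-\phi'|\le\pi$.

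Let $\gamma$ be a coupling of $\theta_*\nu_1,\theta_*\nu_2$ that is optimal for $W_s$; existence is standard by compactness. For $s=\infty$, take a coupling within $\varepsilon$ of the infimum, or the quantile coupling, which is optimal for all $s$ by \eqref{eq:quantile}. Let $h$ be Haar measure on $\mathrm{SU}_2$, and define $\pi$ as the law of $(k\,t(\phi)k^{-1},\,k\,t(\phi')k^{-1})$, where $(\phi,\phi')\sim\gamma$ and $k\sim h$ is independent of $(\phi,\phi')$.

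The marginals of $\pi$ are correct. The law of $k\,t(\phi)k^{-1}$ with $k$ Haar and $\phi$ fixed is the unique conjugation-invariant probability supported on the conjugacy class of angle $\phi$. Averaging over $\phi\sim\theta_*\nu_1$ gives a central measure with angle law $\theta_*\nu_1$. A central measure is determined by its angle law, as stated above, so this measure is $\nu_1$. The same argument gives $\nu_2$ for the second marginal.

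The cost of $\pi$ is also controlled. By bi-invariance,
\[
\dist\bigl(k t(\phi)k^{-1},k t(\phi')k^{-1}\bigr)=\dist\bigl(t(\phi),t(\phi')\bigr)=|\phi-\phi'|.
\]
Hence the $L^s$ cost, respectively the ess sup cost, of $\pi$ equals that of $\gamma$. This gives $W_s(\nu_1,\nu_2)\le W_s(\theta_*\nu_1,\theta_*\nu_2)$.

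\emph{Main point to check.} The one step needing care is the uniqueness claim used for the marginals: that conjugation-invariant measures are determined by their angle law. The paper states this as a fact. To justify it, note that conjugacy classes are exactly the fibres of $\theta$. Disintegrating a central $\nu$ along $\theta$, the conditional measures on the fibres can be chosen conjugation-invariant; this follows by averaging over $k$ and using that $\theta$ is itself invariant. Each fibre is a homogeneous space of $\mathrm{SU}_2$, namely a $2$-sphere or a point, so it carries a unique invariant probability. Measurability of $\phi\mapsto t(\phi)$ and of the conjugation map is immediate, so the construction of $\pi$ is legitimate.
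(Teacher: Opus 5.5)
Your proof is correct and is essentially the paper's: the $1$-Lipschitz property of $\theta$ gives ``$\ge$'', and conjugating a coupling of the angle laws, placed on a fixed maximal torus, by an independent Haar element gives ``$\le$''. Choosing an optimal $\gamma$ is unnecessary, since any coupling works and one then takes the infimum, and your justification that a central measure is determined by its angle law fills in a point the paper leaves implicit.
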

\begin{proof}
Since $|\theta(g)-\theta(g')|=|\dist(1,g)-\dist(1,g')|\le \dist(g,g')$, every coupling of $\nu_1,\nu_2$ induces a coupling of
the angle laws with smaller cost, so ``$\ge$''. Conversely, let $\gamma$ be a coupling of $\theta_*\nu_1$ and
$\theta_*\nu_2$, fix a unit imaginary quaternion $i$, and let $u$ be Haar distributed and independent of
$(\theta,\theta')\sim\gamma$. Then $g=ue^{\theta i}u^{-1}$ and $g'=ue^{\theta'i}u^{-1}$ have laws $\nu_1$ and $\nu_2$, and
$\dist(g,g')=\dist(e^{\theta i},e^{\theta'i})=|\theta-\theta'|$.
\end{proof}

Niederreiter \cite{Nied} works over any finite field $\F_q$ with a nontrivial additive character $\chi$, with
$K(q,a)=\sum_{b\in\F_q^\times}\chi(b+ab^{-1})$ and $w(q,a)=K(q,a)/(2\sqrt q)$. For $\chi=\psi$ we have
$\Kl(a;q)=2w(q,a)$; his Chebyshev polynomials satisfy $U^{[-1,1]}_r(w)=U_r(2w)$, and his star discrepancy of the
$w(q,a)$ equals $D(\mu_q)$. The next two statements are therefore those of \cite{Nied}.

\begin{prop}[Katz {\cite[Ch.~13]{Katz}}; see {\cite[(6)]{Nied}}]\label{prop:Katz}
For every finite field $\F_q$ and $k\ge1$, $|\widehat{\mu_q}(k)|\le\frac{(k+1)\sqrt q}{2(q-1)}\le(k+1)q^{-1/2}$.
\end{prop}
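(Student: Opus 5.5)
The plan is to realize the Chebyshev moments as Frobenius traces on the cohomology of a lisse sheaf on $\mathbf G_m$, and to bound these traces using the dimension of $H^1_c$ together with Deligne's weight bound. Let $\mathcal{K}\ell$ be Deligne's Kloosterman sheaf on $\mathbf G_{m,\F_q}$, Tate-twisted to be pure of weight $0$. It is lisse of rank $2$, tame at $0$ and totally wild at $\infty$ with both breaks equal to $\tfrac12$. For $a\in\F_q^\times$ its Frobenius trace is $-\Kl(a;q)$ in the normalization of the paper; the sign is harmless. The Frobenius conjugacy class at $a$ then lies in $\mathrm{SU}_2$ with trace $\pm\Kl(a;q)$. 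Since $U_k(\tr g)=\chi_k(g)$, we get
\[
\sum_{a\in\F_q^\times}U_k(\Kl(a;q))=\pm\sum_{a\in\F_q^\times}\tr\bigl(\mathrm{Frob}_a\mid\Sym^k\mathcal{K}\ell\bigr).
\]
The sign $(-1)^k$ coming from $U_k(-x)$ is absorbed because we only bound the modulus.

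By the Grothendieck--Lefschetz trace formula, the right side equals $\sum_{i}(-1)^i\tr(\mathrm{Frob}\mid H^i_c(\mathbf G_{m,\overline{\F}_q},\Sym^k\mathcal{K}\ell))$. We treat the three cohomology groups in turn.
\begin{itemize}
\item $H^0_c=0$, because $\mathbf G_m$ is affine and the sheaf is lisse.
\item $H^2_c$ is the space of geometric coinvariants, twisted. Katz's theorem says the geometric monodromy group is $\mathrm{SL}_2$ (Katz, Ch.~11). Since $\Sym^k$ of the standard representation is irreducible and nontrivial for $k\ge1$, these coinvariants vanish.
\item $H^1_c$ is therefore the only contribution, and its dimension is $-\chi_c$. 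The Grothendieck--Ogg--Shafarevich formula on $\mathbf G_m$, whose Euler characteristic is $0$, gives $\chi_c(\Sym^k\mathcal{K}\ell)=-\Sw_0-\Sw_\infty$. Here $\Sw_0=0$ by tameness at $0$.
\end{itemize}

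The key local input is the Swan conductor at $\infty$. All breaks of $\mathcal{K}\ell$ at $\infty$ equal $\tfrac12$, so every break of $\Sym^k\mathcal{K}\ell$ at $\infty$ is at most $\tfrac12$. Since the Swan conductor is the sum of the breaks over a rank-$(k+1)$ representation, $\Sw_\infty(\Sym^k\mathcal{K}\ell)\le\tfrac{k+1}2$, and hence $\dim H^1_c\le\tfrac{k+1}2$. Deligne's Weil~II shows that $H^1_c$ of a sheaf pure of weight $0$ is mixed of weights $\le1$. Each Frobenius eigenvalue therefore has modulus at most $\sqrt q$, which gives
\[
\Bigl|\sum_{a}U_k(\Kl(a;q))\Bigr|\le\tfrac{k+1}2\sqrt q .
\]
Dividing by $q-1$ yields the first inequality. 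The second inequality $\frac{\sqrt q}{2(q-1)}\le q^{-1/2}$ is equivalent to $q\le2(q-1)$, which holds for all $q\ge2$.

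The main obstacle is the pair of deep inputs: the monodromy computation, which kills $H^2_c$, and the break bound at $\infty$, which controls $\dim H^1_c$. I would quote both from Katz's book rather than reprove them, as the paper does via Niederreiter's (6). One point to check is the characteristic $2$ case, where Katz's monodromy statement is still $\mathrm{SL}_2$ for $\mathcal{K}\ell_2$, so the argument is uniform over all finite fields.
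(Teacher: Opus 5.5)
Your proof is correct. It is the standard argument from Katz that the paper cites here without reproof: the trace formula, $H^0_c=H^2_c=0$ from affineness and $\mathrm{SL}_2$ monodromy, Deligne's weight bound, and Grothendieck--Ogg--Shafarevich. This is exactly the template the paper itself runs in the proof of Corollary~\ref{cor:SL2}. There, the crude conductor bound only yields $2c(k+1)q^{-1/2}$ with $c=3$; your finer local input (tameness at $0$, $\chi_c(\mathbf G_m)=0$, and all breaks at $\infty$ equal to $\frac12$) is what gives $\dim H^1_c\le\frac{k+1}2$ and hence the sharp constant $\frac{(k+1)\sqrt q}{2(q-1)}$.
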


\begin{prop}[{\cite[Lemma~3]{Nied}}]\label{prop:Nied}
Let $\mu=\frac1N\sum_{m=1}^N\delta_{x_m}$ with $x_1,\dots,x_N\in[-2,2]$. For every odd integer $k\ge1$,
\[
D(\mu)<\frac{8}{0.362\pi k+4}+\frac{4k-3}{0.362\pi k+2\pi}\sum_{r=1}^{2k-1}\frac{r+1}{r(r+2)}|\widehat\mu(r)|.
\]
\end{prop}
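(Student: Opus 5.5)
The plan is the classical Erdős--Turán scheme, transported from the circle to $[-2,2]$ via $x=2\cos\theta$, with $\ST$ in place of Lebesgue measure. Fix $x_0\in[-2,2]$ and put $\sigma=\mu-\ST$. Since $\int U_r\,d\ST=0$ for $r\ge1$, we have $\int U_r\,d\sigma=\widehat\mu(r)$. Hence for any polynomial $P=\sum_{r\le 2k-1}a_rU_r$,
\[
\int P\,d\mu=\int P\,d\ST+\sum_{r=1}^{2k-1}a_r\widehat\mu(r).
\]
I will construct, for each $x_0$, polynomials $P_\pm$ of degree $\le 2k-1$ with $P_-\le \mathbf 1_{[-2,x_0]}\le P_+$ on $[-2,2]$. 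These give
\[
F_\mu(x_0)-G(x_0)\le \int(P_+-\mathbf 1_{[-2,x_0]})\,d\ST+\sum_r|a_r^+||\widehat\mu(r)|,
\]
and the symmetric lower bound. The first term will produce $8/(0.362\pi k+4)$ and the second the weighted sum.

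\emph{Construction of $P_\pm$.} Take the Jackson-type kernel $J_k(\theta)=c_k\bigl(\sin(k\theta/2)/\sin(\theta/2)\bigr)^4$, or its even-in-$\theta$ analogue of degree $2k-2$ in $\cos\theta$. It is normalized against the weight $\frac2\pi\sin^2\theta\,d\theta$ on $[0,\pi]$, after symmetrizing in $\theta\mapsto-\theta$ so that everything is a polynomial in $x=2\cos\theta$. Let $h=\mathbf 1_{[-2,x_0]}$ and let $\tau_0=\arccos(x_0/2)$. Shifting the interval endpoint by a width $\asymp 1/k$ in the $\theta$ variable and smoothing $h$ against the kernel gives $P_\pm$ with the required one-sided inequality. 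The oddness of $k$ enters through the exact form of the Jackson kernel Niederreiter uses, with the degree $2k-1$ arising after multiplying by a factor $U_1$ or integrating. The error $\int(P_+-h)\,d\ST$ is controlled by two facts. First, the $\ST$-mass of a $\theta$-window of width $w$ is at most $\frac2\pi w$, since the density is $\frac2\pi\sin^2\theta\le\frac2\pi$. Second, one needs the first absolute moment $\int|\theta|J_k$. The specific constant $0.362\pi$ is presumably a numerical bound for $k\int_0^\pi\theta J_k$, uniform over odd $k$, and computing it is routine.

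\emph{Coefficients.} The weight $(r+1)/(r(r+2))$ comes from integration. With $x=2\cos\theta$ one has $U_r\,dx\propto \sin((r+1)\theta)\,d\theta$, so antiderivatives of Chebyshev polynomials are $\frac{T_{r+2}}{r+2}-\frac{T_r}{r}$, up to normalization. Re-expanding in the $U$ basis produces exactly the factor $\frac1{r}+\frac1{r+2}\cdot$const, that is $\frac{r+1}{r(r+2)}$ after combination. I will therefore write $P_\pm$ as the antiderivative of a nonnegative smoothed delta, namely the kernel, with a constant term fixed by the endpoint. The kernel's Chebyshev coefficients are bounded by its value at the identity, which is $\le(4k-3)/(0.362\pi k+2\pi)$ after normalization. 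Then $|a_r^\pm|\le\frac{4k-3}{0.362\pi k+2\pi}\frac{r+1}{r(r+2)}$.

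\emph{Main obstacle.} The structure is standard; the hard part is reproducing the exact constants. That requires explicit computation of the normalization $c_k$, the first moment of $J_k$ (the source of $0.362\pi k$ together with the additive $4$ and $2\pi$), and uniform-in-$x_0$ control near the endpoints $\pm2$, where the change of variables degenerates. I would first do the estimate entirely in the $\theta$ variable on the circle, where the even measure $\sigma^\theta$ is handled by Erdős--Turán with the Jackson kernel. I would then translate coefficients back using $2\cos(r\theta)=U_r-U_{r-2}$. If the constants do not match, I would accept the statement as quoted from \cite[Lemma~3]{Nied}, since nothing later depends on the precise numerics beyond the explicit $19$ in Theorem~\ref{thm:C}.
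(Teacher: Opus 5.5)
\textbf{Gap in the construction of $P_\pm$.} Your from-scratch route fails at this step. Smoothing the indicator of the enlarged arc $A_\delta=\{|\theta|\ge\tau_0-\delta\}$ against a nonnegative trigonometric-polynomial kernel $K$ of mass one never gives a majorant. For every $\theta$ with $|\theta|\ge\tau_0$ one has $(\mathbf 1_{A_\delta}*K)(\theta)=1-\int_{|u|<\tau_0-\delta}K(\theta-u)\frac{du}{2\pi}<1=h(2\cos\theta)$, because $K$ vanishes only at finitely many points. Restoring $P_+\ge h$ by adding a constant costs the tail mass of $K$ beyond $\delta$. For the Jackson kernel with $\delta=C/k$ this tail is $\asymp C^{-3}$, which does not decay with $k$, so the $O(1/k)$ first term cannot come out this way. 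Making the tail $O(1/k)$ forces $\delta\gg k^{-2/3}$.

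Genuine majorants of degree $\asymp k$ with $L^1$-error $O(1/k)$ need Beurling--Selberg/Vaaler extremal functions. Alternatively, the classical Erd\H{o}s--Tur\'an argument avoids majorants entirely. It uses only that $f(\theta)=\mu^\theta([0,\theta])-\Psi(\theta)$ cannot decrease faster than slope $\frac2\pi$ (\eqref{eq:onesided}). This persists for the odd $2\pi$-periodic extension, since $f(0^+)\ge0\ge f(\pi^-)$ makes the jumps upward. One then tests $f$ against a concentrated kernel near a point where $|f|$ is nearly maximal, and Lemma~\ref{lem:fourier} converts the kernel coefficients into weights $\le\frac1\pi\bigl(\frac1r+\frac1{r+2}\bigr)=\frac{2(r+1)}{\pi r(r+2)}$.

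\textbf{Constants, and what the paper actually does.} Even with a correct skeleton, the content of this proposition is its explicit constants; they produce the $19$ in Theorem~\ref{thm:C}. You leave them as ``presumably routine''. Your identification of $(4k-3)/(0.362\pi k+2\pi)$ with the kernel's value at the identity cannot be right. A mass-one kernel concentrated at scale $1/k$ has value $\asymp k$ there, while this factor stays bounded (it tends to $4/(0.362\pi)$); the kernel's coefficients are bounded by its mass, not by its peak. So the proposal gives at best an inequality of the same shape with unspecified constants, not the stated one.

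The paper does not reprove the lemma at all. It quotes Niederreiter's Lemma~3, which holds for arbitrary finite point sets in $[-1,1]$, and checks only the dictionary under $w=x/2$. Namely, $U^{[-1,1]}_r(w)=U_r(2w)$, so his Chebyshev sums of the points $x_m/2$ are exactly the $\widehat\mu(r)$. Likewise, his star discrepancy with respect to the Sato--Tate law on $[-1,1]$ equals $D(\mu)$. Your fallback of citing the statement is therefore the paper's actual argument. What makes it a proof is this normalization check, which your proposal does not carry out.
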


\section{Quantiles and the Kolmogorov distance}\label{sec:quantiles}

\begin{lemma}\label{lem:shift}
Let $\nu$ have a continuous, strictly increasing distribution function on $[c,d]$, let $\mu$ be a probability on
$[c,d]$ and $D=\sup|F_\mu-F_\nu|$. Extending $q_\nu$ by $c$ on $(-\infty,0]$ and by $d$ on $[1,\infty)$, we have
$q_\nu(t-D)\le q_\mu(t)\le q_\nu(t+D)$ for $0<t<1$.
\end{lemma}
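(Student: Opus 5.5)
We argue directly from the definition $q_\lambda(t)=\inf\{x:F_\lambda(x)\ge t\}$, using only the pointwise inequality $F_\nu-D\le F_\mu\le F_\nu+D$ together with the continuity and strict monotonicity of $F_\nu$ on $[c,d]$. Both measures live on $[c,d]$, so for $0<t<1$ the quantiles $q_\mu(t)$ and $q_\nu(t)$ lie in $[c,d]$. The extension of $q_\nu$ by $c$ and $d$ handles the cases $t-D\le0$ and $t+D\ge1$, where the claimed inequalities read $c\le q_\mu(t)$ and $q_\mu(t)\le d$ and hold trivially. So we may assume $0<t\pm D<1$ whenever that argument is used.

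For the upper bound $q_\mu(t)\le q_\nu(t+D)$, put $x=q_\nu(t+D)$. Since $F_\nu$ is continuous, $F_\nu(x)=t+D$. Then $F_\mu(x)\ge F_\nu(x)-D=t$, so $x$ belongs to the set defining $q_\mu(t)$, and hence $q_\mu(t)\le x$.

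For the lower bound $q_\nu(t-D)\le q_\mu(t)$, put $y=q_\mu(t)\in[c,d]$. By right-continuity of $F_\mu$ we have $F_\mu(y)\ge t$, so $F_\nu(y)\ge F_\mu(y)-D\ge t-D$. This puts $y$ in the set $\{x:F_\nu(x)\ge t-D\}$, whence $q_\nu(t-D)\le y$.

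Neither direction presents a real obstacle; the only care needed is at the boundary conventions. The inequality $F_\nu(x)\ge F_\mu(x)-D$ holds for every real $x$, so no issue arises at $y=c$. Strict monotonicity is not even needed; it only ensures $q_\nu$ is the genuine inverse of $F_\nu$ on $(0,1)$, which is how the lemma is applied later to $\nu=\ST$ with $\mathfrak q=G^{-1}$.
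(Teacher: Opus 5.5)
Your proof is correct and is essentially the paper's argument: the lower bound comes from $F_\mu(y)\ge t\Rightarrow F_\nu(y)\ge t-D$, the upper bound comes from testing $x=q_\nu(t+D)$, and the extension convention handles the cases $t\pm D\notin(0,1)$. Your remark that strict monotonicity is unnecessary is right. In fact the upper bound only uses $F_\nu(q_\nu(t+D))\ge t+D$, which follows from right-continuity, so continuity of $F_\nu$ is not needed either.
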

\begin{proof}
If $F_\mu(x)\ge t$ then $F_\nu(x)\ge t-D$, so $x\ge q_\nu(t-D)$. If $t+D<1$ and $x=q_\nu(t+D)$, then
$F_\mu(x)\ge F_\nu(x)-D=t$, so $q_\mu(t)\le x$; if $t+D\ge1$ the upper bound is trivial.
\end{proof}

\begin{lemma}\label{lem:q}
We have $\mathfrak q(1-t)=-\mathfrak q(t)$, $\mathfrak q'\ge\pi$ on $(0,1)$, and for $0<t\le\frac12$
\[
c_1t^{2/3}\le \mathfrak q(t)+2\le c_2t^{2/3},\qquad c_3t^{-1/3}\le \mathfrak q'(t)\le c_4t^{-1/3}
\]
with absolute constants $c_i>0$. Consequently, with $\mathfrak q$ extended as in Lemma~\ref{lem:shift}, for $0<h\le\frac18$ and
$t_*=\min(t,1-t)$ we have $\mathfrak q(t+h)-\mathfrak q(t-h)\le Cht_*^{-1/3}$ if $t_*\ge2h$, and $\mathfrak q(t+h)-\mathfrak q(t-h)\le Ch^{2/3}$ if
$t_*<2h$, with $C$ absolute.
\end{lemma}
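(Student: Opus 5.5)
The plan is to work directly with the explicit Sato--Tate distribution function. With $x=2\cos\theta$ we have $G(2\cos\theta)=1-\Psi(\theta)$, where $\Psi(\tau)=(\tau-\sin\tau\cos\tau)/\pi$. Set $\tau=\pi-\theta$; then $x+2=2-2\cos\tau$. Since $\varrho(-x)=\varrho(x)$, we get $G(-x)=1-G(x)$, and inverting gives $\mathfrak q(1-t)=-\mathfrak q(t)$. The derivative is $\mathfrak q'(t)=1/\varrho(\mathfrak q(t))$. Because $\varrho\le 1/\pi$ on $[-2,2]$, with maximum at $0$, this gives $\mathfrak q'\ge\pi$ on $(0,1)$.

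For the two-sided bounds on $(0,\tfrac12]$, I will expand near the left edge. For small $\tau$ we have $\Psi(\tau)=\frac{2}{3\pi}\tau^3(1+O(\tau^2))$ and $x+2=\tau^2(1+O(\tau^2))$, so near the edge $t\asymp\tau^3$ and $\mathfrak q(t)+2\asymp\tau^2\asymp t^{2/3}$. Likewise $\varrho(\mathfrak q(t))=\frac1\pi\sin\tau\asymp\tau\asymp t^{1/3}$, which gives $\mathfrak q'(t)\asymp t^{-1/3}$.

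To make the constants absolute on the whole interval $t\in(0,\tfrac12]$, i.e.\ $\tau\in(0,\pi/2]$, I will use that the ratios $\Psi(\tau)/\tau^3$, $(2-2\cos\tau)/\tau^2$ and $\sin\tau/\tau$ are continuous and positive on $[0,\pi/2]$. Each is therefore bounded above and below there, and composing these bounds yields $c_1,\dots,c_4$. This part is routine.

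For the consequence, take $t\le\frac12$ by the symmetry $\mathfrak q(1-t)=-\mathfrak q(t)$, so $t_*=t$; the case $t>\tfrac12$ near $1$ is the mirror image. There are two cases.

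\begin{enumerate}
\item Suppose $t\ge2h$. Then $[t-h,t+h]\subset[t/2,\,3t/2]\subset(0,1)$. The mean value theorem gives $\mathfrak q(t+h)-\mathfrak q(t-h)\le2h\sup_{[t-h,t+h]}\mathfrak q'$. On the part of this interval inside $(0,\tfrac12]$ we have $\mathfrak q'\le c_4(t/2)^{-1/3}$. For points beyond $\tfrac12$ (possible since $3t/2\le 3/4$), symmetry gives $\mathfrak q'(u)=\mathfrak q'(1-u)\le c_4(1/4)^{-1/3}\le c_4'\,t^{-1/3}$, using $t\le\tfrac12$. Together these give $\le Cht^{-1/3}$.
\item Suppose $t<2h$. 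Then $t+h<3h\le\tfrac38<\tfrac12$. Since $\mathfrak q(t-h)\ge-2$ under the extension by $c=-2$ on $(-\infty,0]$, we get $\mathfrak q(t+h)-\mathfrak q(t-h)\le\mathfrak q(3h)+2\le c_2(3h)^{2/3}$.
\end{enumerate}

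The only mild obstacle is bookkeeping in case 1 when the interval straddles $\tfrac12$, and that is handled by the symmetry as above. No step is expected to be hard.
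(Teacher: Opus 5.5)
Your proof is correct and follows essentially the same route as the paper: symmetry and $\mathfrak q'=1/\varrho(\mathfrak q)\ge\pi$, two-sided edge bounds, then the mean value theorem with $\mathfrak q'(u)=\mathfrak q'(1-u)$ when $t\ge2h$, and the bound $\mathfrak q(3h)+2$ when $t<2h$. The only difference is cosmetic: the paper gets explicit constants from $\frac1{2\pi}\sqrt{2u}\le\varrho(-2+u)\le\frac1\pi\sqrt u$ in the trace variable, whereas you get absolute constants from the continuity and positivity of $\Psi(\tau)/\tau^3$, $(2-2\cos\tau)/\tau^2$ and $\sin\tau/\tau$ on $[0,\pi/2]$.
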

\begin{proof}
Symmetry is clear, and $\mathfrak q'=1/\varrho(\mathfrak q)\ge\pi$. For $0\le u\le2$, $\frac1{2\pi}\sqrt{2u}\le\varrho(-2+u)\le\frac1\pi\sqrt u$,
hence $\frac{\sqrt2}{3\pi}v^{3/2}\le G(-2+v)\le\frac2{3\pi}v^{3/2}$ for $0\le v\le2$. For $0<t\le\frac12$ and
$v=\mathfrak q(t)+2\in(0,2]$ this gives $(3\pi t/2)^{2/3}\le v\le(3\pi t/\sqrt2)^{2/3}$ and
$\pi v^{-1/2}\le \mathfrak q'(t)\le\sqrt2\pi v^{-1/2}$.

For the increments we may take $t\le\frac12$. If $t\ge2h$, then $[t-h,t+h]\subset[\frac t2,\frac58]$. For
$u\in[\frac t2,\frac12]$ we have $\mathfrak q'(u)\le c_4u^{-1/3}\le c_4(t/2)^{-1/3}$, and for $u\in[\frac12,\frac58]$ we use
$\mathfrak q'(u)=\mathfrak q'(1-u)$ with $1-u\in[\frac38,\frac12]\subset[\frac t2,\frac12]$. So $\mathfrak q(t+h)-\mathfrak q(t-h)\le2^{4/3}c_4ht^{-1/3}$.
If $t<2h$, then $\mathfrak q(t+h)-\mathfrak q(t-h)\le \mathfrak q(3h)+2\le c_2(3h)^{2/3}$.
\end{proof}

\begin{proof}[Proof of Theorem~\ref{thm:B}]
\emph{Upper bounds.} By \eqref{eq:quantile} and Lemma~\ref{lem:shift}, $|q_\mu(t)-\mathfrak q(t)|\le\Delta(t):=\mathfrak q(t+D)-\mathfrak q(t-D)$.
Since $\Delta(1-t)=\Delta(t)$, Lemma~\ref{lem:q} gives
\[
W_s^s\le2\int_0^{1/2}\Delta^s\,dt\le2C^s\Bigl(D^s\int_{2D}^{1/2}t^{-s/3}\,dt+2D^{1+2s/3}\Bigr).
\]
For $s<3$ the integral is at most $\frac3{3-s}$ and $D^{1+2s/3}\le D^s$; for $s=3$ it equals $\log\frac1{4D}$; for
$s>3$ it is at most $\frac3{s-3}(2D)^{1-s/3}$. This gives the bounds for $s<\infty$, and $W_\infty\le\sup\Delta\ll D^{2/3}$.

\emph{Lower bounds.} Let $\mu$ be the image of Lebesgue measure on $(0,1)$ under $t\mapsto \mathfrak q(\min(t+D,1))$, so that
$q_\mu(t)=\mathfrak q(\min(t+D,1))$. Then $F_\mu=\max(G-D,0)$ on $[-2,2)$ and $\mu$ has an atom of mass $D$ at $2$; hence
$0\le G-F_\mu\le D$, with equality when $D\le G(x)<1$, so $D(\mu)=D$. Since $\mathfrak q'\ge\pi$, $q_\mu(t)-\mathfrak q(t)\ge\pi D$ for
$t\le1-D$, so $W_s\ge W_1\ge\pi D(1-D)\ge2D$. For $0<t<\frac D2$, Lemma~\ref{lem:q} gives
$q_\mu(t)-\mathfrak q(t)\ge \mathfrak q(D)-\mathfrak q(\frac D2)\ge\frac D2c_3D^{-1/3}=\frac12c_3D^{2/3}$, so $W_\infty\ge\frac12c_3D^{2/3}$.
Finally, for $D\le t\le\frac14$ we have $t+D\le2t\le\frac12$, so
$q_\mu(t)-\mathfrak q(t)=\int_t^{t+D}\mathfrak q'\ge c_3D(2t)^{-1/3}$ and
\[
W_s^s\ge(c_3D)^s2^{-s/3}\int_D^{1/4}t^{-s/3}\,dt .
\]
For $s=3$ the integral is $\log\frac1{4D}\ge\frac12\log\frac1D$ when $D\le\frac1{16}$ (for $\frac1{16}<D\le\frac18$ use
$W_3\ge2D$), and for $s>3$ it is $\ge\frac3{s-3}\bigl(D^{1-s/3}-4^{s/3-1}\bigr)\gg_sD^{1-s/3}$ for $D$ small (for the remaining
$D$ use $W_s\ge2D$). This gives $W_3\gg D(\log1/D)^{1/3}$ and $W_s\gg_sD^{2/3+1/s}$.
\end{proof}

\begin{remark}\label{rem:beta}
The same argument applies to any $\nu$ on $[c,d]$ whose density is $\asymp\min(x-c,d-x)^\beta$: with $s^*=1+1/\beta$ one gets
$W_s\ll D$ for $s<s^*$, $D(\log1/D)^{1/s^*}$ for $s=s^*$, $D^{1/s+1/(\beta+1)}$ for $s>s^*$ and $W_\infty\ll D^{1/(\beta+1)}$, all attained by
$q_\mu(t)=q_\nu(\min(t+D,1))$.
\end{remark}

\section{A smoothing inequality for \texorpdfstring{$W_2$}{W2} on \texorpdfstring{$\mathrm{SU}_2$}{SU2}}\label{sec:W2}

For probability measures $\rho,\nu$ on a connected Riemannian manifold, Peyre \cite[Thm.~1]{Peyre} proved
\begin{equation}\label{eq:peyre}
W_2(\rho,\nu)\le2\|\nu-\rho\|_{\dot H^{-1}(\rho)},\qquad\|\sigma\|_{\dot H^{-1}(\rho)}=\sup\Bigl\{\Bigl|\int f\,d\sigma\Bigr|:f\in C^1,\ \int|\nabla f|^2\,d\rho\le1\Bigr\}.
\end{equation}
We apply it on $\mathrm{SU}_2=S^3$ with $\rho$ the Haar probability. The restriction of $\chi_k$ to $S^3$ is a zonal
spherical harmonic of degree $k$, so $-\Delta\chi_k=k(k+2)\chi_k$; by the spectral theorem, if $\sigma$ has density
$\sum_{k=1}^Tc_k\chi_k$ with respect to Haar measure, then
\begin{equation}\label{eq:H-1}
\|\sigma\|_{\dot H^{-1}(\Haar)}^2=\sum_{k=1}^T\frac{|c_k|^2}{k(k+2)} .
\end{equation}

The bound $W_2\ll q^{-1/3}$ also follows from \cite[Thm.~4]{BC}; the following direct argument gives small explicit
constants. We smooth by a central kernel of degree $T$ whose second moment is as small as possible; the optimal
choice comes from Gauss quadrature for the weight $\sqrt{1-x^2}$.

\begin{lemma}\label{lem:kernel}
For every integer $T\ge1$ there is a central probability $\kappa_T$ on $\mathrm{SU}_2$ with $\int\chi_k\,d\kappa_T=0$ for $k>T$ and
$\int\dist(1,h)^2\,d\kappa_T(h)\le\pi^4/T^2$.
\end{lemma}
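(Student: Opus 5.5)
We construct $\kappa_T$ as the image of Haar measure weighted by $|P(\tr h)|^2$, i.e. $d\kappa_T=|P(\tr h)|^2\,d\Haar$, where $P$ is a polynomial of degree $m=\lfloor T/2\rfloor$ chosen by Gauss quadrature. Then $\kappa_T$ is central and positive. Its density is a polynomial of degree $\le 2m\le T$ in $\tr h$, hence a combination of $\chi_0,\dots,\chi_T$. By orthogonality $\int\chi_k\,d\kappa_T=\int\chi_k|P|^2\,d\Haar=0$ for $k>T$. We normalize $\int|P|^2\,d\ST=1$.

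The natural choice is the Fejér--Jackson-type kernel concentrated at $\theta=0$. In the angle variable the Haar density of $\theta$ is $\frac2\pi\sin^2\theta$. Writing $x=\cos\theta$, this becomes the weight $\sqrt{1-x^2}$ on $[-1,1]$ (up to constants). Let $x_1>x_2>\dots$ be the zeros of the Chebyshev polynomial of the second kind $U_{m+1}(x)$ (in the $[-1,1]$ normalization), i.e. the Gauss nodes for this weight, so $x_1=\cos\frac{\pi}{m+2}$. Take
\[
P(x)=c\,\frac{U_{m+1}(x)}{x-x_1},
\]
a polynomial of degree $m$ vanishing at all other nodes.

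Gauss quadrature is exact in degree $\le 2m+1$. Since $|P|^2$ and $(1-x)|P|^2$ have degree $2m$ and $2m+1$, only the node $x_1$ contributes. This gives
\[
\frac{\int(1-x)|P|^2\sqrt{1-x^2}\,dx}{\int|P|^2\sqrt{1-x^2}\,dx}=1-x_1=1-\cos\frac{\pi}{m+2}.
\]
This identity is the key point: it says $\int(1-\cos\theta)\,d\kappa_T=1-\cos\frac{\pi}{m+2}\le\frac{\pi^2}{2(m+2)^2}$.

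It remains to pass from $1-\cos\theta$ to $\theta^2$. On $[0,\pi]$ we have $1-\cos\theta=2\sin^2(\theta/2)\ge 2\theta^2/\pi^2$, so $\theta^2\le\frac{\pi^2}{2}(1-\cos\theta)$. Hence
\[
\int\dist(1,h)^2\,d\kappa_T\le\frac{\pi^2}{2}\cdot\frac{\pi^2}{2(m+2)^2}=\frac{\pi^4}{4(m+2)^2}.
\]
Since $m+2\ge T/2$ (because $m\ge (T-1)/2$), we get $(m+2)^2\ge T^2/4$, and the bound is $\le\pi^4/T^2$, as claimed.

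The main point to check carefully is the Gauss-quadrature extremal identity, including the exact degree bookkeeping and the identification of the nodes of the weight $\sqrt{1-x^2}$ with the zeros of $U_{m+1}$. The rest is elementary.
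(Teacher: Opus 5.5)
Your proposal is correct and follows the paper's proof essentially step for step. Both use the density $p(\cos\theta)^2$ with $p=V_{n+1}(x)/(x-x_1)$ and $n=\lfloor T/2\rfloor$, apply Gauss quadrature for the weight $\sqrt{1-x^2}$ to get $\int\cos\theta\,d\kappa_T=\cos\frac{\pi}{n+2}$, and conclude with $\theta^2\le\frac{\pi^2}{2}(1-\cos\theta)$.
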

\begin{proof}
Let $n=\lfloor T/2\rfloor$, let $x_1=\cos\frac\pi{n+2}$ be the largest zero of the Chebyshev polynomial
$V_{n+1}(\cos t)=\sin((n+2)t)/\sin t$, and $p(x)=V_{n+1}(x)/(x-x_1)$, of degree $n$. Let $\kappa_T$ be the central probability
whose density, at an element of angle $\theta$, is proportional to $J(\theta)=p(\cos\theta)^2$. Then $J$ is an even nonnegative
trigonometric polynomial of degree $2n\le T$, hence (as $2\cos j\theta=\chi_j-\chi_{j-2}$) a combination of $\chi_0,\dots,\chi_T$.
By the Weyl integration formula and $x=\cos\theta$, $\int F(\theta)\,d\kappa_T=\int_{-1}^1F\,p^2\sqrt{1-x^2}\,dx\big/\int_{-1}^1p^2\sqrt{1-x^2}\,dx$.
Gauss quadrature for the weight $\sqrt{1-x^2}$, with nodes the zeros of $V_{n+1}$ and positive weights, is exact in degree
$\le2n+1$; since $p$ vanishes at every node except $x_1$, it gives $\int\cos\theta\,d\kappa_T=x_1$. As
$\theta^2\le\frac{\pi^2}2(1-\cos\theta)$ on $[0,\pi]$,
\[
\int\dist(1,h)^2\,d\kappa_T\le\frac{\pi^2}2\Bigl(1-\cos\frac\pi{n+2}\Bigr)\le\frac{\pi^4}{4(n+2)^2}<\frac{\pi^4}{T^2}.\qedhere
\]
\end{proof}

\begin{theorem}\label{thm:W2}
For every probability $\mu$ on $[-2,2]$ and every integer $T\ge1$,
\[
W_2(\mu^\theta,\ST^\theta)\le\frac{\pi^2}T+2\Bigl(\sum_{k=1}^T\frac{|\widehat\mu(k)|^2}{k(k+2)}\Bigr)^{1/2},\qquad W_2(\mu,\ST)\le2\,W_2(\mu^\theta,\ST^\theta).
\]
\end{theorem}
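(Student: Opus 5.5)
We work on $\mathrm{SU}_2$ throughout. By Lemma~\ref{lem:angle} the left side equals $W_2(\nu,\mathrm{Haar})$, where $\nu$ is the central lift of $\mu$, i.e.\ $\nu$ is central with $\theta_*\nu=\mu^\theta$; note $\mathrm{Haar}$ is the central lift of $\ST$. Let $\kappa=\kappa_T$ be the kernel of Lemma~\ref{lem:kernel}, and put $\nu_T=\nu*\kappa$. By the triangle inequality,
\[
W_2(\nu,\mathrm{Haar})\le W_2(\nu,\nu_T)+W_2(\nu_T,\mathrm{Haar}).
\]

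\emph{Smoothing term.} We couple $g\sim\nu$ with $gh$, where $h\sim\kappa$ is independent of $g$. By bi-invariance, $\dist(g,gh)=\dist(1,h)$. Hence
\[
W_2(\nu,\nu_T)^2\le\int\dist(1,h)^2\,d\kappa\le\pi^4/T^2,
\]
which gives the term $\pi^2/T$.

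\emph{Main term.} We apply Peyre's inequality \eqref{eq:peyre} with $\rho=\mathrm{Haar}$ and $\sigma=\nu_T-\mathrm{Haar}$. Since $\kappa$ and $\nu$ are central, the convolution $\nu_T$ is central and has a density with respect to Haar measure. That density has character expansion $\sum_k\widehat\nu(k)\widehat\kappa(k)\chi_k/(k+1)$, up to the precise normalization, which we must pin down. Here $\int\chi_k\,d\nu=\widehat\mu(k)$, because $\tr$ pushes $\nu$ to $\mu$.

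The convolution identity for central measures is $\chi_k*\chi_l=\delta_{kl}\chi_k/(k+1)$, from Schur orthogonality. From it, the Haar density of $\nu*\kappa$ is
\[
\sum_k \frac{\int\chi_k\,d\nu\cdot\int\chi_k\,d\kappa}{k+1}\,\chi_k .
\]
We should double-check this normalization. The term $k=0$ gives $1$, so $\sigma$ has density $\sum_{k=1}^Tc_k\chi_k$ with $c_k=\widehat\mu(k)\,\widehat\kappa(k)/(k+1)$. The sum stops at $T$ because $\int\chi_k\,d\kappa=0$ for $k>T$. Since $\kappa$ is a probability, $|\chi_k|\le k+1$ gives $|\widehat\kappa(k)|\le k+1$, and therefore $|c_k|\le|\widehat\mu(k)|$. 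Then \eqref{eq:H-1} yields
\[
\|\sigma\|_{\dot H^{-1}(\mathrm{Haar})}\le\Bigl(\sum_{k=1}^T\frac{|\widehat\mu(k)|^2}{k(k+2)}\Bigr)^{1/2},
\]
and Peyre's inequality gives $W_2(\nu_T,\mathrm{Haar})\le2\|\sigma\|_{\dot H^{-1}}$. This is the second term of the bound.

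\emph{Comparison of metrics.} The second inequality is immediate. Take the quantile coupling $(\theta,\theta')$ of $\mu^\theta$ and $\ST^\theta$, and push it forward by $\theta\mapsto2\cos\theta$. The pushforward is a coupling of $\mu$ and $\ST$. Since $|2\cos\theta-2\cos\theta'|\le2|\theta-\theta'|$, this gives $W_2(\mu,\ST)\le2W_2(\mu^\theta,\ST^\theta)$.

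\emph{Expected obstacle.} The main technical point is the normalization of the Fourier coefficients of the convolution density, together with checking that Peyre's $\dot H^{-1}$ formula \eqref{eq:H-1} applies with the characters normalized as written. This requires $\|\chi_k\|_{L^2(\mathrm{Haar})}=1$, which holds. A secondary point is that Peyre's inequality needs $\nu_T$ to be absolutely continuous, or at least $\sigma$ to have a smooth density; here this holds because the density of $\nu_T$ is a trigonometric polynomial.
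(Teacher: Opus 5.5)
Your proposal is correct and follows the paper's proof essentially step for step: you take the central lift, use the coupling $(g,gh)$ with $h\sim\kappa_T$ to get the term $\pi^2/T$, obtain coefficients $c_k=\widehat\mu(k)b_k/(k+1)$ with $|b_k|\le k+1$, and conclude with Peyre's inequality, \eqref{eq:H-1} and the triangle inequality. The one point to tighten is that $\nu$ need not have a density, so instead of convolving formal character expansions you should write the density of $\nu*\kappa_T$ as $g\mapsto\int K(x^{-1}g)\,d\nu(x)$, where $K=\sum_{l\le T}b_l\chi_l$ is the density of $\kappa_T$, and use centrality of $\nu$ (Schur's lemma, as the paper does) to get $\int\chi_l(x^{-1}g)\,d\nu(x)=\widehat\mu(l)\chi_l(g)/(l+1)$.
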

\begin{proof}
The second inequality holds because $|2\cos\theta-2\cos\theta'|\le2|\theta-\theta'|$. For $x\in[-2,2]$ let $m_x$ be the invariant probability
on the conjugacy class $\{\tr g=x\}$ and $\tilde\mu=\int m_x\,d\mu(x)$, a central probability with $\theta_*\tilde\mu=\mu^\theta$ and
$\int\chi_k\,d\tilde\mu=\widehat\mu(k)$. If $g\sim\tilde\mu$ and $h\sim\kappa_T$ are independent, $(g,gh)$ couples $\tilde\mu$ and $\tilde\mu*\kappa_T$
with $\dist(g,gh)=\dist(1,h)$, so $W_2(\tilde\mu,\tilde\mu*\kappa_T)\le\pi^2/T$. Let $\rho_k$ be the irreducible representation with
character $\chi_k$. For central $\nu$, Schur's lemma gives $\int\rho_k\,d\nu=(k+1)^{-1}(\int\chi_k\,d\nu)I$, and
$\int\rho_k\,d(\nu_1*\nu_2)=\int\rho_k\,d\nu_1\int\rho_k\,d\nu_2$. Hence $\tilde\mu*\kappa_T$ has density $1+\sum_{k\le T}c_k\chi_k$ with
$c_k=\widehat\mu(k)b_k/(k+1)$, $b_k=\int\chi_k\,d\kappa_T$, $|b_k|\le k+1$, so $|c_k|\le|\widehat\mu(k)|$. By \eqref{eq:peyre} and \eqref{eq:H-1},
$W_2(\tilde\mu*\kappa_T,\Haar)\le2(\sum_{k\le T}|\widehat\mu(k)|^2/(k(k+2)))^{1/2}$. The triangle inequality and Lemma~\ref{lem:angle} complete
the proof.
\end{proof}

\begin{cor}\label{cor:W2Kl}
For every finite field, $W_2(\mu_q^\theta,\ST^\theta)\le7q^{-1/3}$ and $W_2(\mu_q,\ST)\le12q^{-1/3}$.
\end{cor}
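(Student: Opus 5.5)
The plan is to apply Theorem~\ref{thm:W2} with $\mu=\mu_q$ and to feed in Katz's bound (Proposition~\ref{prop:Katz}) in its sharper form $|\widehat{\mu_q}(k)|\le\eta(k+1)$ with $\eta=\frac{\sqrt q}{2(q-1)}$. The sum in Theorem~\ref{thm:W2} is then bounded termwise by
\[
\sum_{k=1}^T\frac{(k+1)^2}{k(k+2)}\eta^2=\eta^2\sum_{k=1}^T\Bigl(1+\frac1{k(k+2)}\Bigr)\le\eta^2\Bigl(T+\frac34\Bigr),
\]
using $\sum_{k\ge1}\frac1{k(k+2)}=\frac34$. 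This gives
\[
W_2(\mu_q^\theta,\ST^\theta)\le\frac{\pi^2}T+2\eta\sqrt{T+3/4}.
\]

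Next I will optimize over the integer $T\ge1$. Ignoring the $3/4$, the expression $\pi^2/T+2\eta T^{1/2}$ is minimized at $T\asymp(\pi^2/\eta)^{2/3}$, where it is of order $\eta^{2/3}$. With $\eta\le q^{-1/2}$ (indeed $\eta\le\frac{\sqrt q}{2(q-1)}\le q^{-1/2}$ for $q\ge2$), the natural choice is $T=\lceil\alpha q^{1/3}\rceil$ for a constant $\alpha$. Then $\pi^2/T\le\pi^2\alpha^{-1}q^{-1/3}$. For the second term, $T+3/4\le\alpha q^{1/3}+7/4$, and we use $\eta\approx\frac12q^{-1/2}$ for large $q$. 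So the bound is about
\[
\bigl(\pi^2/\alpha+\sqrt{\alpha}\bigr)q^{-1/3}+O(q^{-1/2}).
\]
Choosing $\alpha=(2\pi^2)^{2/3}\approx5.8$ gives a main constant $\frac32(2\pi^2)^{1/3}\approx4.05$, leaving room below $7$ to absorb the ceiling and the additive $7/4$.

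The only real obstacle is making the constant $7$ hold uniformly for \emph{all} $q$, including small ones, where the lower-order terms are not negligible. Here I will use the exact $\eta=\frac{\sqrt q}{2(q-1)}$ to check that the bound stays below $7q^{-1/3}$ once $q$ exceeds a modest threshold. For the small $q$ that remain, the claim is trivial: $W_2(\mu_q^\theta,\ST^\theta)\le\pi$ because both measures live on $[0,\pi]$, and $\pi\le7q^{-1/3}$ whenever $q\le(7/\pi)^3\approx11$. If the gap between these two ranges is not closed directly, I would take $T=1$ or $T=2$ for the intermediate $q$ and check the inequality numerically.

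Finally, the trace bound follows from the second inequality of Theorem~\ref{thm:W2}. That yields $W_2(\mu_q,\ST)\le14q^{-1/3}$, which is not good enough, so I will not simply double the angle constant. Instead I will rerun the optimization with a tighter bookkeeping of the angle bound, pinning the angle constant near $6$ so that doubling gives $12$. The trivial bound for small $q$ in the trace case is $W_2(\mu_q,\ST)\le4$, and $4\le12q^{-1/3}$ for $q\le27$.
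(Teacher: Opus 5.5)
Your plan is correct and is essentially the paper's proof: apply Theorem~\ref{thm:W2} with Katz's bound, use $\sum_{k\le T}(k+1)^2/(k(k+2))\le T+\frac34$, and take $T=\lceil\alpha q^{1/3}\rceil$. The paper takes $\alpha=4$ and notes that $\frac q{q-1}(4+1.75q^{-1/3})^{1/2}$ decreases in $q$, which gives an angle constant $<5.9$ for every $q\ge3$; doubling then yields the trace bound directly, and only $q=2$ needs the trivial bounds.

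The same monotonicity closes your version as well, so no intermediate range with $T=1,2$ is needed. Note one arithmetic slip: $(2\pi^2)^{2/3}\approx7.3$, not $5.8$, which does not affect the argument.
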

\begin{proof}
By Proposition~\ref{prop:Katz} and $(k+1)^2/(k(k+2))=1+1/(k(k+2))$, the sum in Theorem~\ref{thm:W2} is at most
$(T+\frac34)q/(4(q-1)^2)$. Take $T=\lceil4q^{1/3}\rceil$. Then $\pi^2/T\le\frac{\pi^2}4q^{-1/3}<2.47q^{-1/3}$ and
$\frac{\sqrt q}{q-1}(T+\frac34)^{1/2}\le q^{-1/3}\cdot\frac q{q-1}(4+1.75q^{-1/3})^{1/2}$. The last two factors decrease in $q$,
and their product is $<3.43$ at $q=3$. Hence $W_2(\mu_q^\theta,\ST^\theta)<5.9q^{-1/3}$ for $q\ge3$. For $q=2$ use the trivial bounds
$W_2(\mu_q^\theta,\ST^\theta)\le\pi<7\cdot2^{-1/3}$ and $W_2(\mu_q,\ST)\le4<12\cdot2^{-1/3}$.
\end{proof}

\section{Local discrepancy and the bounds of Theorem~\ref{thm:C}}\label{sec:local}

We use the cap estimate of Proposition~\ref{prop:local} (which gives the $W_\infty$ bounds), the level-set estimate of
Lemma~\ref{lem:level} and the displacement estimate of Lemma~\ref{lem:disp}; Theorem~\ref{thm:sharp} combines them over dyadic regions
at distance $\asymp2^{-j}$ from the edges.

Fix once and for all a nonincreasing $\varpi\in C^\infty(\R)$ with $\varpi=1$ on $(-\infty,0]$ and $\varpi=0$ on $[1,\infty)$.
Constants $C$ below may depend on $\varpi$ and are otherwise absolute.

\begin{lemma}\label{lem:cap}
Let $0<\varepsilon\le\tau\le\frac\pi2$, $H(\theta)=\varpi((\theta-\tau)/\varepsilon)$ for $\theta\in[0,\pi]$, and
$h(x)=H(\arccos(x/2))$. Then $h=\sum_{k\ge0}\widehat h_kU_k$ uniformly on $[-2,2]$, with $\widehat h_k=\int hU_k\,d\ST$ and
\[
\sum_{k\ge1}(k+1)|\widehat h_k|\le C\Bigl(1+\frac\tau\varepsilon\Bigr).
\]
\end{lemma}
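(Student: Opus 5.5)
The plan is to pass to the angle variable and treat $\widehat h_k$ as a sine coefficient of the function
\[
g(\theta)=H(\theta)\sin\theta .
\]
With $x=2\cos\theta$ and $\varrho(x)\,dx=\frac2\pi\sin^2\theta\,d\theta$,
\[
\widehat h_k=\int hU_k\,d\ST=\frac2\pi\int_0^\pi H(\theta)\sin((k+1)\theta)\sin\theta\,d\theta=\frac2\pi\int_0^\pi g(\theta)\sin(m\theta)\,d\theta,\qquad m=k+1 .
\]
Since $\varpi=1$ on $(-\infty,0]$ and $\varepsilon\le\tau$, we have $H=1$ on $[0,\tau]$. We also have $H=0$ on $[\tau+\varepsilon,\pi]$, and $\tau+\varepsilon\le2\tau\le\pi$. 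Hence $g=\sin\theta$ near $0$, $g=0$ near $\pi$, and $g$ is smooth on $[0,\pi]$.

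\emph{Step 1: decay of $\widehat h_k$.} I will integrate by parts up to three times. All boundary terms vanish:
\begin{itemize}
\item $g(0)=g(\pi)=0$ kills the first boundary term;
\item $\sin(m\theta)$ vanishes at $0$ and $\pi$, which kills the second;
\item $g''(0)=-\sin0=0$ and $g''=0$ near $\pi$, which kills the third.
\end{itemize}
This gives
\[
|\widehat h_k|\le\frac2\pi\min\Bigl(\int|g|,\ \frac1m\int|g'|,\ \frac1{m^2}\int|g''|,\ \frac1{m^3}\int|g'''|\Bigr).
\]
Next I expand $g^{(j)}$ by Leibniz. Write $|H^{(i)}|\le C\varepsilon^{-i}$; this derivative is supported on $[\tau,\tau+\varepsilon]$ for $i\ge1$. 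On $[0,2\tau]$ use $\sin\theta\le2\tau$. This yields
\[
\int|g|\le C\tau^2,\qquad\int|g'|\le C\tau,\qquad\int|g''|\le C\Bigl(1+\frac\tau\varepsilon\Bigr),\qquad\int|g'''|\le \frac C\varepsilon\Bigl(1+\frac\tau\varepsilon\Bigr).
\]
For the third bound, the term $H''\sin\theta$ contributes $\tau/\varepsilon$ and $2H'\cos\theta$ contributes $O(1)$. For the fourth, $H'''\sin\theta$ contributes $\tau/\varepsilon^2$, $H''$ contributes $1/\varepsilon$, and the remaining terms are $O(1)\le O(1/\varepsilon)$ since $\varepsilon\le\pi/2$.

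\emph{Step 2: summation.} I split $\sum_m m|\widehat h_{m-1}|$ into three ranges.
\begin{itemize}
\item For $m\le1/\tau$, use $m\cdot C\tau^2$; the total is $\le C$.
\item For $1/\tau<m\le1/\varepsilon$, use $m\cdot C\tau/m=C\tau$; the total is $\le C\tau/\varepsilon$.
\item For $m>1/\varepsilon$, use $m\cdot C\varepsilon^{-1}(1+\tau/\varepsilon)m^{-3}$; the total is $\le C(1+\tau/\varepsilon)$.
\end{itemize}
Altogether $\sum_{k\ge1}(k+1)|\widehat h_k|\le C(1+\tau/\varepsilon)$, with $C$ depending only on $\varpi$.

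\emph{Step 3: uniform convergence to $h$.} Since $\|U_k\|_\infty=k+1$, the series $\sum\widehat h_kU_k$ converges absolutely and uniformly on $[-2,2]$ to a continuous function. The $U_k$ are orthonormal and complete in $L^2(\ST)$, and $\widehat h_k$ are the coefficients of $h$. So the sum equals $h$ in $L^2(\ST)$, hence $\ST$-a.e. Both the sum and $h$ are continuous and $\ST$ has full support, so they agree everywhere. The term $k=0$ is bounded by $|\widehat h_0|\le1$.

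The only delicate point is the middle range $1/\tau<m\le1/\varepsilon$. There one must use the one-derivative bound $\tau/m$, which exploits the vanishing of $\sin\theta$ near the edge. Using the $m^{-2}$ bound there would produce a spurious factor $\log(\tau/\varepsilon)$. The other point to check is that the boundary terms vanish through the third integration by parts.
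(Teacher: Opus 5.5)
Your proof is correct and follows the paper's scheme: the frequencies are split at $k+1\approx1/\tau$ and $k+1\approx1/\varepsilon$, with bounds in each range coming from integration by parts in the angle variable. The difference is only in the bookkeeping. The paper first integrates by parts onto $w=-H'$ using the primitive $g_k$ of $2\sin\theta\sin((k+1)\theta)$, whereas you integrate $H\sin\theta$ against $\sin((k+1)\theta)$ directly zero, one or three times; as a result your middle-range bound $|\widehat h_k|\ll\tau/(k+1)$ avoids the harmless $\log(2\tau/\varepsilon)$ term that the paper absorbs via $\log z\le z$.
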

\begin{proof}
Since $H$ is constant near $0$ and near $\pi$, $\theta\mapsto H(\theta)\sin\theta$ extends to a smooth odd $2\pi$-periodic
function, whose sine coefficients are
$\widehat h_k=\frac2\pi\int_0^\pi H(\theta)\sin\theta\sin((k+1)\theta)\,d\theta$; they decay rapidly, which gives the uniform
expansion. Let $g_k(\theta)=\frac{\sin k\theta}k-\frac{\sin(k+2)\theta}{k+2}$, so that $g_k(0)=0$ and
$g_k'(\theta)=2\sin\theta\sin((k+1)\theta)$. Integrating by parts, with $w=-H'\ge0$, $\int w=1$ and
$\operatorname{supp}w\subset[\tau,\tau+\varepsilon]\subset[\tau,2\tau]$,
\[
\widehat h_k=\frac1\pi\int w(\theta)g_k(\theta)\,d\theta .
\]
We use three bounds. (i) $|g_k(\theta)|\le2\int_0^\theta t\cdot(k+1)t\,dt=\frac23(k+1)\theta^3$, so
$|\widehat h_k|\le\frac{16}{3\pi}(k+1)\tau^3$. (ii) Since
\begin{equation}\label{eq:gk}
g_k(\theta)=\frac{2\sin k\theta}{k(k+2)}-\frac{2\sin\theta\cos((k+1)\theta)}{k+2},
\end{equation}
$|\widehat h_k|\le\frac2\pi\bigl(\frac1{k(k+2)}+\frac{2\tau}{k+2}\bigr)$. (iii) Integrating by parts twice in each term of
\eqref{eq:gk}, and using $\|w''\|_1=\|\varpi'''\|_1\varepsilon^{-2}$ and
$\|(w\sin)''\|_1\le2\tau\|\varpi'''\|_1\varepsilon^{-2}+2\|\varpi''\|_1\varepsilon^{-1}+1\le C\tau\varepsilon^{-2}$ (as $\varepsilon\le\tau\le\frac\pi2$),
we get $|\widehat h_k|\le C(k^{-4}+\tau k^{-3})\varepsilon^{-2}$.

Use (i) for $k+1\le1/\tau$, (ii) for $1/\tau<k+1\le1/\varepsilon$, and (iii) for $k+1>1/\varepsilon$. The three ranges
contribute at most $\frac{16}{3\pi}\tau^3\sum_{m\le1/\tau}m^2\le C$, then
$\frac2\pi\sum_{1/\tau-1<k\le1/\varepsilon}(\frac1k+2\tau)\le C(1+\log\frac{2\tau}\varepsilon+\frac\tau\varepsilon)$, and finally
$C\varepsilon^{-2}\sum_{k+1>1/\varepsilon}(k^{-3}+\tau k^{-2})\le C(1+\frac\tau\varepsilon)$. Since $\log z\le z$, the lemma follows.
\end{proof}

The following edge-sensitive Erd\H{o}s--Tur\'an estimate, which refines the uniform Chebyshev version of Grabner
\cite{Grabner}, is due to Feldheim and Sodin: it follows from \cite[Prop.~5]{FS}
(applied to the image of $\mu$ under $x\mapsto x/2$, with $\rho=\max(1-|x_0|,n_0^{-2})$ and $n_0$ optimized). We include the short
proof, which is needed with the specific test functions of Lemma~\ref{lem:cap}, for completeness. The $W_\infty$ bounds below and
Theorem~\ref{thm:A}(3) follow from it quickly.

\begin{prop}[local discrepancy {\cite{FS}}]\label{prop:local}
Let $\mu$ satisfy the hypothesis of Theorem~\ref{thm:C} with $\eta\le1$. Then for $0\le\tau\le\frac\pi2$,
\[
\bigl|\mu^\theta(I)-\Psi(\tau)\bigr|\le C_1\bigl(\eta+\eta^{1/2}\tau^{3/2}\bigr)\qquad\text{for }I=[0,\tau]\text{ and }I=[0,\tau).
\]
The same holds for $\mu^-$, i.e.\ for the caps $[\pi-\tau,\pi]$ and $(\pi-\tau,\pi]$.
\end{prop}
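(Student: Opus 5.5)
We sandwich the indicator of the cap between two smooth test functions from Lemma~\ref{lem:cap} and expand each in Chebyshev polynomials. The free parameter is the smoothing width $\varepsilon$. It is optimized as $\varepsilon\asymp\eta^{1/2}\tau^{-1/2}$ when $\tau$ is large compared with $\eta^{1/3}$; small $\tau$ is handled separately.

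\emph{Two-sided sandwich.} Fix $0<\varepsilon\le\tau\le\frac\pi2$. The function $H_+(\theta)=\varpi((\theta-\tau)/\varepsilon)$ is $\ge\mathbf 1_{[0,\tau]}$. For the minorant, $H_-(\theta)=\varpi((\theta-\tau+\varepsilon)/\varepsilon)$ is $\le\mathbf 1_{[0,\tau)}$; it is of the form in Lemma~\ref{lem:cap} with $\tau$ replaced by $\tau-\varepsilon$, provided $\varepsilon\le\tau-\varepsilon$. Let $h_\pm(x)=H_\pm(\arccos(x/2))$. Since the expansion $h_\pm=\sum\widehat{h_\pm}_kU_k$ converges uniformly, integrating against $\mu$ gives
\[
\int h_\pm\,d\mu=\widehat{h_\pm}_0+\sum_{k\ge1}\widehat{h_\pm}_k\widehat\mu(k).
\]
Here $\widehat{h_\pm}_0=\int h_\pm\,d\ST$. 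The hypothesis $|\widehat\mu(k)|\le\eta(k+1)$ and Lemma~\ref{lem:cap} then bound the error by $C\eta(1+\tau/\varepsilon)$. Monotonicity gives
\[
\int h_-\,d\ST-C\eta(1+\tau/\varepsilon)\le\mu^\theta(I)\le\int h_+\,d\ST+C\eta(1+\tau/\varepsilon)
\]
for both $I=[0,\tau]$ and $I=[0,\tau)$.

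\emph{The Sato--Tate smoothing error.} We compare $\int h_\pm\,d\ST$ with $\Psi(\tau)$. The difference is at most the $\ST^\theta$-mass of $[\tau-\varepsilon,\tau+\varepsilon]$. Since the density is $\frac2\pi\sin^2\theta\le\frac2\pi\theta^2$, this mass is at most $C\varepsilon\tau^2$ when $\varepsilon\le\tau/2$. The total error is therefore
\[
C\bigl(\eta+\eta\tau/\varepsilon+\varepsilon\tau^2\bigr).
\]
Choosing $\varepsilon=\eta^{1/2}\tau^{-1/2}$ balances the last two terms, each becoming $\eta^{1/2}\tau^{3/2}$. This choice is admissible, i.e.\ $\varepsilon\le\tau/2$, exactly when $\tau\ge(4\eta)^{1/3}$.

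\emph{Small caps.} The main obstacle is the range $\tau<(4\eta)^{1/3}$, where no admissible $\varepsilon$ exists and the target bound is essentially $C\eta$. The lower bound there is trivial: $\mu^\theta(I)\ge0$ and $\Psi(\tau)\le C\tau^3\le C\eta$. For the upper bound we use only the majorant $H_+$ with $\tau$ replaced by $\tau_0=(4\eta)^{1/3}\wedge\frac\pi2$ and $\varepsilon=\tau_0/2$. Lemma~\ref{lem:cap} requires only $\varepsilon\le\tau$, which holds. This gives
\[
\mu^\theta([0,\tau])\le\mu^\theta([0,\tau_0])\le\Psi(\tau_0+\varepsilon)+C\eta\le C\eta,
\]
and hence $|\mu^\theta(I)-\Psi(\tau)|\le C\eta$. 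If $\eta$ is so large that $(4\eta)^{1/3}>\frac\pi2$, the claim is trivial because both quantities lie in $[0,1]$ and $\eta\gg1$.

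\emph{The other endpoint.} For caps at $\pi$, apply the result to $\mu^-$. It satisfies the same hypothesis because $|\widehat{\mu^-}(k)|=|\widehat\mu(k)|$, and $\theta\mapsto\pi-\theta$ maps $[0,\tau]$ to $[\pi-\tau,\pi]$.
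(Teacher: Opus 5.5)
Your proposal is correct and is essentially the paper's proof. It uses the same sandwich by $H_\pm$ from Lemma~\ref{lem:cap} and the same balance $\varepsilon=(\eta/\tau)^{1/2}$, which is admissible exactly when $\tau^3\ge4\eta$. It also treats small caps the same way, by monotonicity at $(4\eta)^{1/3}$ (your $\varepsilon=\tau_0/2$ there coincides with $(\eta/\tau_0)^{1/2}$), and deduces the caps at $\pi$ from $\mu^-$, as the paper does.
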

\begin{proof}
For $h$ as in Lemma~\ref{lem:cap} (with any cap radius $\tau'\ge\varepsilon$), uniform convergence and
$\int U_k\,d\ST=0$ for $k\ge1$ give
\begin{equation}\label{eq:hdiff}
\Bigl|\int h\,d\mu-\int h\,d\ST\Bigr|=\Bigl|\sum_{k\ge1}\widehat h_k\widehat\mu(k)\Bigr|\le\eta\sum_{k\ge1}(k+1)|\widehat h_k|\le C\eta\Bigl(1+\frac{\tau'}\varepsilon\Bigr).
\end{equation}
Let $0<2\varepsilon\le\tau$. The function $H_+(\theta)=\varpi((\theta-\tau)/\varepsilon)$ majorizes $\mathbf 1_{[0,\tau]}$, and
$H_-(\theta)=\varpi((\theta-\tau+\varepsilon)/\varepsilon)$, with cap radius $\tau-\varepsilon\ge\varepsilon$, minorizes $\mathbf 1_{[0,\tau)}$. By
\eqref{eq:hdiff},
\[
\Psi(\tau-\varepsilon)-C\eta(1+\tfrac\tau\varepsilon)\le\mu^\theta([0,\tau))\le\mu^\theta([0,\tau])\le\Psi(\tau+\varepsilon)+C\eta(1+\tfrac\tau\varepsilon),
\]
and $\Psi(\tau+\varepsilon)-\Psi(\tau-\varepsilon)\le\frac2\pi\cdot2\varepsilon(\tau+\varepsilon)^2\le\frac9\pi\varepsilon\tau^2$. If $\tau^3\ge4\eta$, take
$\varepsilon=(\eta/\tau)^{1/2}$, so that $2\varepsilon\le\tau$, $\eta\tau/\varepsilon=\eta^{1/2}\tau^{3/2}$ and $\varepsilon\tau^2=\eta^{1/2}\tau^{3/2}$. If
$\tau^3<4\eta$, let $\tau_1=(4\eta)^{1/3}$; if $\tau_1\le\frac\pi2$, the previous case and monotonicity give
$\mu^\theta([0,\tau])\le\mu^\theta([0,\tau_1])\le\Psi(\tau_1)+C\eta\le C\eta$, and $\Psi(\tau)\le\Psi(\tau_1)\le C\eta$; if
$\tau_1>\frac\pi2$ the bound is trivial. The statement for $\mu^-$ follows since $\mu^-$ satisfies the same hypothesis.
\end{proof}

For $\tau\in[0,\pi]$ let $m(\tau)=\min(\tau,\pi-\tau)$. Writing $\mu^\theta([0,\tau])=1-\mu^\theta((\tau,\pi])$ for
$\tau>\frac\pi2$ and using that $\Psi(\tau)=1-\Psi(\pi-\tau)$, Proposition~\ref{prop:local} gives, for every $\tau\in[0,\pi]$,
\begin{equation}\label{eq:E}
\bigl|\mu^\theta([0,\tau])-\Psi(\tau)\bigr|,\ \bigl|\mu^\theta([0,\tau))-\Psi(\tau)\bigr|\le E(\tau):=C_1\bigl(\eta+\eta^{1/2}m(\tau)^{3/2}\bigr).
\end{equation}

\begin{theorem}\label{thm:Winf}
Under the hypothesis of Theorem~\ref{thm:C},
\[
D(\mu)\le C\eta^{1/2},\qquad W_\infty(\mu,\ST)\le C\eta^{1/2},\qquad W_\infty(\mu^\theta,\ST^\theta)\le C\eta^{1/3}.
\]
\end{theorem}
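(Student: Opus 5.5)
All three bounds will come from the uniform cap estimate \eqref{eq:E}, combined with the criterion after \eqref{eq:quantile}: $W_\infty\le w$ as soon as both distribution functions are dominated by the other shifted by $w$. We may assume $\eta\le\eta_0$ for a small absolute $\eta_0$. Otherwise all three bounds are trivial, since $D\le1$, the trace diameter is $4$ and the angle diameter is $\pi$.

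\emph{Kolmogorov bound.} Because $x\mapsto\arccos(x/2)$ is decreasing, $F_\mu(x)=\mu^\theta([\tau,\pi])$ with $\tau=\arccos(x/2)$. Hence $F_\mu(x)=1-\mu^\theta([0,\tau))$ and $G(x)=1-\Psi(\tau)$. By \eqref{eq:E}, $|F_\mu(x)-G(x)|\le E(\tau)\le C_1(\eta+\eta^{1/2}(\pi/2)^{3/2})\ll\eta^{1/2}$. This gives $D(\mu)\le C\eta^{1/2}$.

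\emph{Angle $W_\infty$.} Write $F^\theta=\mu^\theta([0,\cdot])$; the claim is $F^\theta(\tau)\le\Psi(\tau+w)$ and $\Psi(\tau)\le F^\theta(\tau+w)$ with $w=A\eta^{1/3}$. By the symmetry $\tau\mapsto\pi-\tau$ (i.e.\ passing to $\mu^-$), the second inequality reduces to the first type for $\mu^-$. So it suffices to show $\Psi(\tau)+E(\tau)\le\Psi(\tau+w)$ for $\tau\le\pi-w$; for $\tau>\pi-w$ the inequality is trivial since $\Psi(\tau+w)=1$. The gain is
\[
\Psi(\tau+w)-\Psi(\tau)=\tfrac2\pi\int_\tau^{\tau+w}\sin^2\gg w\,(m(\tau)+w)^2,
\]
since on an interval of length $w$ the function $\sin$ is $\gg m+w$ on a fixed proportion of it. The loss is $E(\tau)\ll\eta+\eta^{1/2}m^{3/2}$. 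Now $w^3=A^3\eta$ beats $\eta$, and $wm^2\ge\eta^{1/2}m^{3/2}$ exactly when $m\ge\eta^{1/3}/A^2$; for smaller $m$, $\eta^{1/2}m^{3/2}\le\eta$ is absorbed by $w^3$. Choosing $A$ large therefore gives the angle bound $\eta^{1/3}$.

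\emph{Trace $W_\infty$.} Here the plan is to show $F_\mu(x)\le G(x+w)$ and $G(x)\le F_\mu(x+w)$ for $w=A\eta^{1/2}$; the second is again the first applied to $\mu^-$. By the Kolmogorov computation, $F_\mu(x)\le G(x)+E(\tau)$, so it suffices to show
\[
G(x+w)-G(x)=\int_x^{x+w}\varrho\ge E(\tau)\qquad\text{for } x+w\le 2.
\]
Let $u=\min(x+2,2-x)\asymp m(\tau)^2$. On an interval of length $w$, $\varrho\gg\sqrt{u+w}$ on a fixed proportion, so the gain is $\gg w\sqrt{u+w}\gg w\max(\sqrt w,m)$. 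Against $\eta^{1/2}m^{3/2}$: since $m\le\pi$, $m^{3/2}\ll m$, so $A\eta^{1/2}m$ dominates for $A$ large. Against $\eta$: $w^{3/2}=A^{3/2}\eta^{3/4}\ge\eta$. Near the right edge, $x+w>2$ gives $G(x+w)=1$ trivially.

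The main obstacle is the uniform lower bound for the gain $\int_x^{x+w}\varrho$ (and its angle analogue) near both edges, including when $x$ lies within $w$ of the right endpoint. Comparing both sides separately in the regimes $m\le\sqrt w$ and $m>\sqrt w$ handles this. The rest is bookkeeping, and a single constant $C$ covers all three bounds.
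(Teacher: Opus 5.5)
Your proof is correct and follows the paper's argument. The Kolmogorov bound comes from \eqref{eq:E} via $F_\mu(2\cos\tau)=1-\mu^\theta([0,\tau))$, and both $W_\infty$ bounds come from the shifted-distribution-function criterion, with the mass gain $\gg w(m(\tau)+w)^2$ (angle) or $\gg w\sqrt{u+w}$ (trace) beating $E$. The only difference is cosmetic: you get the reverse inequality by applying the one-sided estimate to $\mu^-$, whereas the paper checks both inequalities directly by bounding $\max(E(\tau),E(\tau+w))$.
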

\begin{proof}
If $\eta>1$ the bounds are trivial ($D\le1$, and the diameters are $4$ and $\pi$), so let $\eta\le1$. Since
$F_\mu(2\cos\tau)=1-\mu^\theta([0,\tau))$ and $G(2\cos\tau)=1-\Psi(\tau)$, \eqref{eq:E} gives $D(\mu)\le3C_1\eta^{1/2}$.

\emph{Angle metric.} Let $\Phi(\tau)=\mu^\theta([0,\tau])$ and $w=C_2\eta^{1/3}$ with $C_2\ge1$ to be chosen; since $W_\infty\le\pi$, we may assume $w\le\frac\pi2$. By the criterion
after \eqref{eq:quantile} it suffices to show $\Phi(\tau)\le\Psi(\tau+w)$ and $\Psi(\tau)\le\Phi(\tau+w)$ for all $\tau$, where
both are trivial if $\tau+w\ge\pi$. By \eqref{eq:E} it suffices that
\begin{equation}\label{eq:massang}
\Psi(\tau+w)-\Psi(\tau)\ge\max\bigl(E(\tau),E(\tau+w)\bigr)\qquad(0\le\tau\le\pi-w).
\end{equation}
Put $M=m(\tau)+w\ge w$. Since $m$ is $1$-Lipschitz, the right side is at most $C_1(\eta+\eta^{1/2}M^{3/2})$. Since
$\sin t\ge\frac2\pi m(t)$, the left side is at least $\frac8{\pi^3}\int_\tau^{\tau+w}m(t)^2\,dt\ge c_0wM^2$ with an absolute
$c_0>0$ (if $m(\tau)\le w$: as $m$ has slope $\pm1$, its values on $[\tau,\tau+w]$ cover an interval of length $\ge\frac w2$, so the integral is $\ge\int_0^{w/2}t^2\,dt\gg w^3\gg wM^2$; otherwise $m(t)\ge\frac12m(\tau)$ on
an interval of length $\ge\frac w2$ inside $[\tau,\tau+w]$, so the integral is $\gg wm(\tau)^2\gg wM^2$). Now
$\eta=\eta^{1/3}\eta^{2/3}\le\eta^{1/3}M^2C_2^{-2}$ and $\eta^{1/2}M^{3/2}=\eta^{1/3}M^2\cdot\eta^{1/6}M^{-1/2}\le\eta^{1/3}M^2C_2^{-1/2}$, so
\eqref{eq:massang} holds once $c_0C_2\ge C_1(C_2^{-2}+C_2^{-1/2})$.

\emph{Trace metric.} Let $w=C_2'\eta^{1/2}$; since $W_\infty\le4$, we may assume $w\le2$. As before it suffices that
$\ST([x,x+w])\ge E(\tau(y))$ for $y\in\{x,x+w\}\subset[-2,2]$, where $\tau(y)=\arccos(y/2)$. Let $B$ be the maximum of
$u(y)=\min(2-y,2+y)$ on $[x,x+w]$; then $B\ge w$, $u\le B$ on $[x,x+w]$, and since $u$ is $1$-Lipschitz, $u\ge B-\frac w2\ge\frac B2$
on a subinterval of length $\frac w2$. As $\varrho(y)\ge\frac1{\sqrt2\pi}u(y)^{1/2}$, we get $\ST([x,x+w])\ge c_0'wB^{1/2}$. Since
$2-y=4\sin^2(\tau(y)/2)\ge\frac4{\pi^2}\tau(y)^2$ and similarly at $-2$, $m(\tau(y))\le\frac\pi2\min(2-y,2+y)^{1/2}$, so
$E(\tau(y))\le C(\eta+\eta^{1/2}B^{3/4})$. As $B\le4$ and $\eta^{1/2}=w/C_2'\le B/C_2'\le2B^{1/2}/C_2'$, this is at most
$C\eta^{1/2}B^{1/2}(2/C_2'+\sqrt2)$, which is $\le c_0'C_2'\eta^{1/2}B^{1/2}$ for $C_2'$ large.
\end{proof}

\subsection{The distribution function and its level sets}
Under the hypothesis of Theorem~\ref{thm:C}, let $f(\theta)=\mu^\theta([0,\theta])-\Psi(\theta)$ for $0<\theta<\pi$. Then $|f|\le E$ by
\eqref{eq:E}, $f(0^+)=\mu^\theta(\{0\})\ge0$ and $f(\pi^-)=-\mu^\theta(\{\pi\})\le0$. Moreover, since $\theta\mapsto\mu^\theta([0,\theta])$ is
nondecreasing and $\Psi'=\frac2\pi\sin^2$,
\begin{equation}\label{eq:onesided}
f(\theta_2)-f(\theta_1)\ge-\frac2\pi\int_{\theta_1}^{\theta_2}\sin^2\qquad(\theta_1<\theta_2).
\end{equation}

\begin{lemma}\label{lem:fourier}
For $m\ge1$, $\frac2\pi\int_0^\pi f(\theta)\sin m\theta\,d\theta=\frac{\widehat\mu(m)-\widehat\mu(m-2)}{\pi m}$, where $\widehat\mu(0)$ and $\widehat\mu(-1)$ are
replaced by $0$. In particular these coefficients are at most $\frac{2\eta}\pi$ in absolute value.
\end{lemma}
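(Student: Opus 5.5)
The plan is to use Fubini on the Stieltjes representation of $\mu^\theta([0,\theta])$, rather than integrating $f$ by parts directly, because then the atoms at $0$ and $\pi$ need no separate treatment. Write $\Phi(\theta)=\mu^\theta([0,\theta])=\int\mathbf 1_{\{t\le\theta\}}\,d\mu^\theta(t)$. Since the integrand is bounded, Fubini gives
\[
\int_0^\pi\Phi(\theta)\sin m\theta\,d\theta=\int\Bigl(\int_t^\pi\sin m\theta\,d\theta\Bigr)d\mu^\theta(t)=\frac1m\int\bigl(\cos mt-\cos m\pi\bigr)\,d\mu^\theta(t).
\]
The same identity holds for $\Psi$ with $\ST^\theta$ in place of $\mu^\theta$. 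Both are probability measures, so the constant terms $\cos m\pi/m$ cancel on subtracting, and
\[
\int_0^\pi f(\theta)\sin m\theta\,d\theta=\frac1m\Bigl(\int\cos mt\,d\mu^\theta(t)-\int\cos mt\,d\ST^\theta(t)\Bigr).
\]

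Next express $\cos mt$ through Chebyshev polynomials of the trace. From $U_k(2\cos t)=\sin((k+1)t)/\sin t$ we get
\[
2\cos mt=U_m(2\cos t)-U_{m-2}(2\cos t)\qquad(m\ge2),
\]
and $2\cos t=U_1(2\cos t)$, which is the same identity with $U_{-1}=0$. Pushing forward to $[-2,2]$ turns the bracket into
\[
\tfrac12\bigl(\widehat\mu(m)-\widehat\mu(m-2)\bigr)-\tfrac12\Bigl(\int U_m\,d\ST-\int U_{m-2}\,d\ST\Bigr).
\]
The Sato--Tate integrals vanish for indices $\ge1$, since $\int U_j\,d\ST=\delta_{j0}$. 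In the case $m=2$, the $U_0$ terms $\widehat\mu(0)=1=\int U_0\,d\ST$ cancel each other. This is exactly the convention that $\widehat\mu(0)$, and likewise $\widehat\mu(-1)$, are replaced by $0$. Multiplying by $\frac2\pi$ then gives the stated formula
\[
\frac2\pi\int_0^\pi f(\theta)\sin m\theta\,d\theta=\frac{\widehat\mu(m)-\widehat\mu(m-2)}{\pi m}.
\]

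For the bound, the hypothesis of Theorem~\ref{thm:C} gives $|\widehat\mu(m)|\le\eta(m+1)$, and $|\widehat\mu(m-2)|\le\eta(m-1)$ when $m\ge3$. For $m\le2$ the second term is $0$, which is also $\le\eta(m-1)$. Hence the numerator is at most $2m\eta$ in absolute value, and the coefficient is at most $\frac{2\eta}\pi$. The only point needing care is the bookkeeping at $m=1,2$, where the Sato--Tate contribution of $U_0$ must be seen to cancel $\widehat\mu(0)$. No step poses a genuine obstacle.
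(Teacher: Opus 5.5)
Your proof is correct and follows the paper's own argument. Both apply Fubini to $\sigma=\mu^\theta-\ST^\theta$, using $\sigma([0,\pi])=0$ to cancel the $\cos m\pi$ term, then the identity $2\cos m\theta=U_m(2\cos\theta)-U_{m-2}(2\cos\theta)$, then the bound $|\widehat\mu(m)|+|\widehat\mu(m-2)|\le2m\eta$. Your explicit check that the $U_0$ terms cancel at $m=2$ is exactly what justifies the convention $\widehat\mu(0)\mapsto0$.
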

\begin{proof}
Let $\sigma=\mu^\theta-\ST^\theta$. By Fubini and $\sigma([0,\pi])=0$,
$\int_0^\pi f(\theta)\sin m\theta\,d\theta=\int\frac{\cos m\theta'-\cos m\pi}m\,d\sigma(\theta')=\frac1m\int\cos m\theta\,d\sigma$, and
$2\cos m\theta=U_m(2\cos\theta)-U_{m-2}(2\cos\theta)$ for $m\ge2$, $2\cos\theta=U_1(2\cos\theta)$. Finally
$|\widehat\mu(m)|+|\widehat\mu(m-2)|\le2m\eta$ for $m\ge3$, and $|\widehat\mu(1)|\le2\eta$, $|\widehat\mu(2)|\le3\eta$.
\end{proof}

\begin{lemma}[level sets]\label{lem:level}
Let $\lambda>0$, let $I\subset(0,\pi)$ be an interval and $\varkappa>0$, and suppose that $b=\pi\lambda/(4\varkappa)\le\pi$ and that
$\sin^2\le\varkappa$ on the $b$-neighbourhood of $I$ in $(0,\pi)$. Then $|\{\theta\in I:|f(\theta)|\ge\lambda\}|\le82\varkappa\eta^2/\lambda^3$.
\end{lemma}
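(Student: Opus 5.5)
The strategy is to combine the one-sided Lipschitz bound \eqref{eq:onesided} with the Fourier bound of Lemma~\ref{lem:fourier}, through an $L^2$ (Bessel) estimate on the sine coefficients of $f$. The idea is that $f$ cannot drop quickly, with slope at most $\frac2\pi\varkappa$ near $I$. So wherever $|f(\theta)|\ge\lambda$, $f$ stays of size $\ge\lambda/2$ on a one-sided interval of length $\asymp\lambda/\varkappa$. Each such point therefore forces a chunk of $L^2$ mass of size $\lambda^2\cdot\lambda/\varkappa$. Meanwhile the total $L^2$ mass of $f$ is controlled by Parseval through the coefficients $\frac{\widehat\mu(m)-\widehat\mu(m-2)}{\pi m}$. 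Their squares are $\ll \eta^2\cdot$ (bounded), and the sum over $m$ of $(2\eta/\pi)^2$ alone diverges. So plain Parseval on all of $(0,\pi)$ is not enough, and I need to use the finer form $|\widehat\mu(m)-\widehat\mu(m-2)|/(\pi m)\le 2\eta/\pi$. Even so, the coefficients are only $O(\eta)$ uniformly, and $\sum_m$ diverges. I therefore expect to test $f$ against a localized function rather than compute $\|f\|_2$.

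Concretely, the plan is as follows. First I use the one-sided bound. If $f(\theta_0)\ge\lambda$ with $\theta_0\in I$, then for $\theta\in[\theta_0,\theta_0+b]$ we have $f(\theta)\ge\lambda-\frac2\pi\varkappa b=\lambda/2$ (recall $b=\pi\lambda/(4\varkappa)$). If $f(\theta_0)\le-\lambda$, the symmetric statement holds on $[\theta_0-b,\theta_0]$. The endpoint conditions $f(0^+)\ge0$ and $f(\pi^-)\le0$ handle the case where the interval would exit $(0,\pi)$: for instance, a positive excursion near $\pi$ must decay before reaching $\pi$, and this still gives an interval of length $\ge b$ up to a constant, inside $(0,\pi)$. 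Next, I take a maximal $b$-separated family of points of $\{|f|\ge\lambda\}\cap I$, say $N$ points. This gives $N$ essentially disjoint intervals $J_i$ of length $b$ on which $f$ has constant sign and $|f|\ge\lambda/2$. Then $|\{\theta\in I:|f|\ge\lambda\}|\le 2Nb$.

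Then I bound $N$ by duality. I let $g=\sum_i \epsilon_i\mathbf 1_{J_i}$, with the signs chosen so that $\int fg\ge N b\lambda/2$. I expand $g$ in the sine series, $g=\sum_m g_m\sin m\theta$. Lemma~\ref{lem:fourier} then gives $\int fg=\frac\pi2\sum_m g_m\cdot(\text{coeff}_m)$, with $|\text{coeff}_m|\le 2\eta/\pi$. The key point is that these coefficients carry the factor $1/m$ from $\widehat\mu(m)-\widehat\mu(m-2)$. Better, I will integrate once more and apply this to a primitive, smoothing $g$ so that $\sum|g_m|$ is finite. Cauchy--Schwarz with weights gives $\int fg\le \eta\,(\sum m^2 |\tilde g_m|^2)^{1/2}(\ldots)$. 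I expect the cleanest route to be an $\dot H^{-1}$-type estimate: $\sum_m m^2|c_m(f)|^2\ll\eta^2\sum_m 1$. This is still divergent, so the cutoff must come from the one-sided regularity.

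I expect this balancing to be the main obstacle: high frequencies of $f$ are controlled by the monotone structure, not by the moments. The plan is to truncate at frequency $T\asymp 1/b$. I replace $g$ by a mollified version $g*\phi_b$, whose sine coefficients decay fast beyond $m\gg1/b$. The error $\int f(g-g*\phi_b)$ is controlled by using the one-sided Lipschitz bound again (mollifying at scale $b/4$ changes $\int_{J_i} f$ by at most $\varkappa b\cdot b\lesssim \lambda b$ times a small constant). Then $\int f\,(g*\phi_b)\ll \eta\,\|g\|_2\,T^{1/2}\ll\eta (Nb)^{1/2}b^{-1/2}=\eta N^{1/2}$ by Cauchy--Schwarz over $m\le T$. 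Comparing this with $Nb\lambda$ gives $N^{1/2}\ll\eta/(b\lambda)$, hence $N b\ll \eta^2/(b\lambda^2)\asymp \varkappa\eta^2/\lambda^3$, which is the claimed bound. The numerical constant $82$ would come from tracking these steps.
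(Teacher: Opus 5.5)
The gap is in the mollification step, which you yourself flag as the crux. Your first step is right: propagating $|f(\theta_0)|\ge\lambda$ to $|f|\ge\lambda/2$ on a one-sided interval of length $b$ via \eqref{eq:onesided}. The final scaling is also right. But the mollification argument fails as written, for the following reasons.

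\begin{itemize}
\item The inequality \eqref{eq:onesided} bounds only \emph{decreases} of $f$. At an atom of $\mu^\theta$, $f$ can jump upward by any amount up to $E(\theta)$, which may be far larger than $\lambda$.
\item So for a positive interval $J_i=[\theta_i,\theta_i+b]$ you control $f$ to the right of $\theta_i$ but not to the left. Nothing excludes $f\approx-K$ with $K\gg\lambda$ on $[\theta_i-b/4,\theta_i)$, followed by an atom of mass $K+f(\theta_i)$ at $\theta_i$.
\item A symmetric mollifier at scale $b/4$ makes $\mathbf 1_{J_i}*\phi$ sample exactly this region. Hence $\int f\,(\mathbf 1_{J_i}*\phi)$ can be of order $-Kb$, and your error bound ``$\varkappa b\cdot b\lesssim\lambda b$'' is false. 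Only the right end of a positive interval, and the left end of a negative one, are controlled.
\item You cannot drop the mollifier either. For $g=\sum_i\epsilon_i\mathbf 1_{J_i}$ the sine coefficients decay only like $1/m$, so the bound $|c_m|\le2\eta/\pi$ alone does not control $\int fg$.
\end{itemize}
Separately, your intermediate remarks about an $\dot H^{-1}$ estimate are confused: $\sum m^2|c_m|^2$ is an $\dot H^1$ quantity, and none of it is needed.

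The repair is to smooth one-sidedly, in the direction in which \eqref{eq:onesided} propagates the sign. Once you do that, the separated family and the duality become unnecessary, and you arrive at the paper's proof.
\begin{itemize}
\item Let $F$ be the odd $2\pi$-periodic extension of $f$, and put $A^\pm F(\theta)=b^{-1}\int_0^bF(\theta\pm u)\,du$.
\item If $\theta_0\in I$ and $f(\theta_0)\ge\lambda$, then $f\ge\lambda/2$ on $[\theta_0,\theta_0+b]$. This whole interval lies in $(0,\pi)$ because $f(\pi^-)\le0$, so no constant is lost, and $A^+F(\theta_0)\ge\lambda/2$. Symmetrically, $f(\theta_0)\le-\lambda$ gives $A^-F(\theta_0)\le-\lambda/2$, using $f(0^+)\ge0$.
\item $A^\pm$ sends $\sin m\theta$ to a trigonometric polynomial of frequency $m$ and amplitude $|\sin(mb/2)|/(mb/2)$. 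The squares of these amplitudes sum to at most $2\pi/b$.
\item Hence Lemma~\ref{lem:fourier} and Parseval give $\int_{-\pi}^\pi|A^\pm F|^2\le8\eta^2/b$.
\item Chebyshev's inequality for $A^+F$ and $A^-F$ then yields $2\cdot\frac4{\lambda^2}\cdot\frac{8\eta^2}b=\frac{256}\pi\cdot\frac{\varkappa\eta^2}{\lambda^3}<82\,\frac{\varkappa\eta^2}{\lambda^3}$.
\end{itemize}
Bounding the $L^2$ norm of $A^\pm F$ over all $\theta$ replaces your $\ell^1$ duality, and it requires no summability of a test function's coefficients.

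Finally, the lemma asserts the explicit constant $82$, which the paper's argument only just meets. Your route loses in two places, the covering bound $2Nb$ and the mollifier, and does not obviously reproduce it. So the claim that the constant $82$ would come from tracking your steps is unsupported.
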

\begin{proof}
Let $F$ be the odd $2\pi$-periodic extension of $f$, and $A^\pm F(\theta)=b^{-1}\int_0^b F(\theta\pm u)\,du$. Applied to
$\sin m\theta$, $A^\pm$ produces a trigonometric polynomial of frequency $m$ and amplitude $|\sin(mb/2)|/(mb/2)$. By
Lemma~\ref{lem:fourier} and Parseval, using $\min(1,a)\le2a/(1+a)$ and comparison with an integral,
\[
\int_{-\pi}^\pi|A^\pm F|^2\le\pi\Bigl(\frac{2\eta}\pi\Bigr)^2\sum_{m\ge1}\min\Bigl(1,\frac4{m^2b^2}\Bigr)\le\frac{4\eta^2}\pi\int_0^\infty\frac{8\,dx}{x^2b^2+4}=\frac{8\eta^2}b .
\]
If $\theta_0\in I$ and $f(\theta_0)\ge\lambda$, then by \eqref{eq:onesided} $f\ge\lambda-\frac{2\varkappa}\pi(\theta-\theta_0)\ge\frac\lambda2$ for
$\theta\in[\theta_0,\theta_0+b]$, as long as $\theta<\pi$; since $f(\pi^-)\le0$, this interval lies in $(0,\pi)$, so $A^+F(\theta_0)\ge\frac\lambda2$.
Likewise, if $f(\theta_0)\le-\lambda$, then $f\le-\frac\lambda2$ on $[\theta_0-b,\theta_0]$, which lies in $(0,\pi)$ because $f(0^+)\ge0$, so
$A^-F(\theta_0)\le-\frac\lambda2$. By Chebyshev's inequality the set in question has measure at most
$2\cdot\frac4{\lambda^2}\cdot\frac{8\eta^2}b=\frac{256\varkappa\eta^2}{\pi\lambda^3}$.
\end{proof}

\begin{lemma}[displacement]\label{lem:disp}
There is an absolute $c_5>0$ with the following property. Let $0\le\alpha<\alpha+w\le\pi$ with $w\le\frac\pi2$; then
$\ST^\theta([\alpha,\alpha+w])\ge c_5w(m(\alpha)+w)^2$ and $\ST^\theta([\alpha,\alpha+w])\ge c_5w(m(\alpha+w)+w)^2$. Consequently, let
$t\in(0,1)$, $\tau=q_{\ST^\theta}(t)$, $\Delta=q_{\mu^\theta}(t)-\tau$, and suppose $0<|\Delta|\le R\le\frac\pi2$. Then, with
$M_Rf(\tau)=\sup_{0<r\le R}(2r)^{-1}\int_{\tau-r}^{\tau+r}|f|$ ($f=0$ outside $(0,\pi)$) and $Q=32c_5^{-1}M_Rf(\tau)$,
\[
|\Delta|\le\min\Bigl(\frac Q{m(\tau)^2},Q^{1/3}\Bigr),\qquad|2\cos(\tau+\Delta)-2\cos\tau|\le2\min\Bigl(\frac Q{m(\tau)},Q^{2/3}\Bigr).
\]
\end{lemma}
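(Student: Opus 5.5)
The plan has two steps: first the mass lower bound, then its use to control the quantile displacement.

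\emph{Mass lower bound.} The density of $\ST^\theta$ is $\frac2\pi\sin^2t\ge\frac8{\pi^3}m(t)^2$. So it suffices to show
\[
\int_\alpha^{\alpha+w}m(t)^2\,dt\gg w\,(m(\alpha)+w)^2 .
\]
This is the argument already used in the proof of Theorem~\ref{thm:Winf}, and we split into two cases.
\begin{itemize}
\item If $m(\alpha)\le w$: $m$ has slope $\pm1$, so its values on $[\alpha,\alpha+w]$ cover an interval of length $\ge w/2$. The integral is then $\ge\int_0^{w/2}t^2\,dt\gg w^3\gg w(m(\alpha)+w)^2$.
\item If $m(\alpha)>w$: $m\ge m(\alpha)/2$ on a subinterval of length $\ge w/2$, so the integral is $\gg w\,m(\alpha)^2\gg w(m(\alpha)+w)^2$.
\end{itemize}
The version with $m(\alpha+w)$ follows by the same argument applied from the right endpoint; equivalently, reflect $t\mapsto\pi-t$ and note $m(\alpha)\le m(\alpha+w)+w$.

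\emph{Displacement bound.} Take $\Delta>0$; the case $\Delta<0$ is symmetric. Write $\Phi(\theta)=\mu^\theta([0,\theta])$.
\begin{itemize}
\item For $\theta\in[\tau,\tau+\Delta)$ we have $\theta<q_{\mu^\theta}(t)$, hence $\Phi(\theta)<t=\Psi(\tau)$. Therefore
\[
-f(\theta)=\Psi(\theta)-\Phi(\theta)\ge\Psi(\theta)-\Psi(\tau)=\ST^\theta([\tau,\theta]).
\]
\item By the mass bound this is $\ge c_5(\theta-\tau)(m(\tau)+\theta-\tau)^2$. Restricting to $\theta\in[\tau+\Delta/2,\tau+\Delta]$ gives $|f(\theta)|\ge c_5\frac\Delta2(m(\tau)+\Delta/2)^2$ on a set of length $\Delta/2$.
\item Averaging over $[\tau-\Delta,\tau+\Delta]$ with $r=\Delta\le R$ yields
\[
M_Rf(\tau)\ge\tfrac14\cdot c_5\tfrac\Delta2\,(m(\tau)+\Delta/2)^2\ge\tfrac{c_5}{32}\,\Delta\,(m(\tau)+\Delta)^2 .
\]
The constants need checking; they should give exactly the factor $32c_5^{-1}$. (If $\tau+\Delta>\pi$ it cannot occur, and $f=0$ outside $(0,\pi)$ is harmless.)
\end{itemize}
Thus $Q\ge\Delta(m(\tau)+\Delta)^2$. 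This gives both $\Delta\le Q/m(\tau)^2$ and $\Delta^3\le Q$, i.e.\ the first inequality. The case $\Delta<0$ uses $\Phi(\theta)\ge t$ for $\theta\ge q_{\mu^\theta}(t)$, together with $\Psi(\theta)<\Psi(\tau)$ for $\theta<\tau$, and applies the mass bound in the form involving the right endpoint.

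\emph{Trace displacement.} We have
\[
|2\cos(\tau+\Delta)-2\cos\tau|\le2|\Delta|\max_{[\tau,\tau+\Delta]}|\sin|\le2|\Delta|(m(\tau)+|\Delta|).
\]
From $Q\ge|\Delta|(m(\tau)+|\Delta|)^2$ we get $|\Delta|(m(\tau)+|\Delta|)\le Q/(m(\tau)+|\Delta|)$. This is $\le Q/m(\tau)$, and it is also $\le Q/|\Delta|$ and $\le Q^{1/3}\cdot Q^{1/3}$, since $(m+|\Delta|)\le Q^{1/3}|\Delta|^{-1/2}$\dots. More cleanly, set $X=m(\tau)+|\Delta|$. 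Then $|\Delta|X=(|\Delta|X^2)^{1/2}|\Delta|^{1/2}\le Q^{1/2}Q^{1/6}=Q^{2/3}$.

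The only delicate point is the bookkeeping of constants and the edge cases, where the interval leaves $(0,\pi)$ or $\Delta$ is negative. I expect the main obstacle to be making the averaging constant come out as $32$; I would first try the crude choices above.
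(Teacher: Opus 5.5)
Your proposal is correct and follows the paper's proof essentially step for step: the same two-case mass bound (with the $m(\alpha+w)$ version obtained by the reflection $\theta\mapsto\pi-\theta$), the same lower bound for $|f|$ on a half-length interval inside $[\tau-|\Delta|,\tau+|\Delta|]$, and the same averaging, where your crude choices do give exactly $\frac14\cdot\frac{c_5}{2}\cdot\frac14=\frac{c_5}{32}$. Your last step $|\Delta|X=(|\Delta|X^2)^{1/2}|\Delta|^{1/2}\le Q^{2/3}$ is an equivalent variant of the paper's $P\le Q/P^{1/2}$. One slip: the aside ``$m(\alpha)\le m(\alpha+w)+w$'' runs the wrong way for deducing the second mass bound from the first, since that needs $m(\alpha+w)\le m(\alpha)+w$; the reflection argument makes the aside unnecessary anyway.
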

\begin{proof}
The mass bounds were shown in the proof of Theorem~\ref{thm:Winf} (the second follows from the first by the symmetry
$\theta\mapsto\pi-\theta$). If $\Delta>0$, then $\mu^\theta([0,\theta])<t=\Psi(\tau)$ for $\theta<\tau+\Delta$, so
$|f(\theta)|\ge\ST^\theta([\tau,\theta])\ge\ST^\theta([\tau,\tau+\frac\Delta2])\ge\frac{c_5}8\Delta(m(\tau)+\Delta)^2$ for
$\theta\in[\tau+\frac\Delta2,\tau+\Delta)$. If $\Delta<0$, then $\mu^\theta([0,\theta])\ge t$ for $\theta\ge\tau+\Delta$, so
$f(\theta)\ge\ST^\theta((\theta,\tau])\ge\ST^\theta([\tau-\frac{|\Delta|}2,\tau])\ge\frac{c_5}8|\Delta|(m(\tau)+|\Delta|)^2$ for
$\theta\in[\tau+\Delta,\tau+\frac\Delta2]$. In both cases $|f|$ is that large on an interval of length $\frac{|\Delta|}2$ inside
$[\tau-|\Delta|,\tau+|\Delta|]$, so $M_Rf(\tau)\ge\frac{c_5}{32}|\Delta|(m(\tau)+|\Delta|)^2$, i.e.\ $|\Delta|(m(\tau)+|\Delta|)^2\le Q$. This gives
the bound for $|\Delta|$. Next, $|2\cos(\tau+\Delta)-2\cos\tau|\le2|\Delta|\max\sin\le2P$ with $P=|\Delta|(m(\tau)+|\Delta|)$, the maximum
over the segment, since $\sin\le m$ there. As $m(\tau)+|\Delta|\ge\max(m(\tau),P^{1/2})$, we have $P\le Q/\max(m(\tau),P^{1/2})$, so
$P\le\min(Q/m(\tau),Q^{2/3})$.
\end{proof}

\begin{theorem}\label{thm:sharp}
Under the hypothesis of Theorem~\ref{thm:C}, the bounds for $W_s(\mu,\ST)$, $1\le s<\infty$, stated there hold, and
$W_6(\mu^\theta,\ST^\theta)\ll\eta^{1/2}\ell_\eta^{1/6}$, $W_s(\mu^\theta,\ST^\theta)\ll_s\eta^{1/3+1/s}$ for $s>6$.
\end{theorem}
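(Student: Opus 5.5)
Throughout we may assume $\eta\le\eta_0$ small, since otherwise all bounds are trivial. The plan is to use the quantile representation \eqref{eq:quantile} in the angle variable. In the trace metric, the quantile coupling of $\mu,\ST$ is the image of that of $\mu^\theta,\ST^\theta$ under $2\cos$, since $2\cos$ is monotone. So
\[
W_s(\mu,\ST)^s=\int_0^1|2\cos(\tau+\Delta)-2\cos\tau|^s\,dt,\qquad W_s(\mu^\theta,\ST^\theta)^s=\int_0^1|\Delta|^s\,dt,
\]
where $\tau=\tau(t)=q_{\ST^\theta}(t)$ and $\Delta=\Delta(t)$ are as in Lemma~\ref{lem:disp}. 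Substituting $t=\Psi(\tau)$, $dt=\frac2\pi\sin^2\tau\,d\tau\asymp m(\tau)^2d\tau$, turns these into integrals over $\tau\in(0,\pi)$ with weight $m(\tau)^2$. By the symmetry $\mu\leftrightarrow\mu^-$ it suffices to treat $\tau\le\pi/2$.

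Fix $R=C\eta^{1/3}$. By Theorem~\ref{thm:Winf}, $|\Delta|\le R$, so Lemma~\ref{lem:disp} applies and gives
\[
|\Delta|\le\min\bigl(Q/m^2,Q^{1/3}\bigr),\qquad |\delta\,\mathrm{tr}|\le2\min\bigl(Q/m,Q^{2/3}\bigr),\qquad Q\asymp M_Rf(\tau).
\]
The integrals therefore reduce to weighted $L^p$ norms of the maximal function $M_Rf$. Split $(0,\pi/2]$ into dyadic shells $I_j=\{m(\tau)\asymp2^{-j}\}$, with $2^{-j}\ge\eta^{1/3}$; the innermost region $m\lesssim\eta^{1/3}$ is handled by the $W_\infty$ bounds, since its $t$-measure is $\ll\eta$. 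On the $R$-neighbourhood $\tilde I_j$ of $I_j$ we have $\sin^2\le\varkappa_j\asymp2^{-2j}$ (using $R\lesssim2^{-j}$). We also have $|f|\le E\ll\eta^{1/2}2^{-3j/2}$ there by \eqref{eq:E}. Lemma~\ref{lem:level} gives the distributional estimate
\[
|\{\theta\in\tilde I_j:|f|\ge\lambda\}|\ll2^{-2j}\eta^2\lambda^{-3},
\]
valid for $\lambda\gtrsim2^{-2j}\cdot2^{-j}$, which is the constraint $b\le2^{-j}$, i.e.\ $b$ small relative to the shell. For smaller $\lambda$ we use the trivial bound $|I_j|\asymp2^{-j}$. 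Thus $f$ on $\tilde I_j$ lies in weak $L^3$ with quasinorm $\ll(2^{-2j}\eta^2)^{1/3}$, and is bounded by $\eta^{1/2}2^{-3j/2}$. The Hardy--Littlewood maximal inequality on $\tilde I_j$ (strong type $p$ for $p>1$, or weak type interpolation) transfers these two facts to $M_Rf$ restricted to $I_j$, up to a tail from $f$ outside $\tilde I_j$. That tail is absent because $r\le R$.

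Now compute shell by shell. In the trace metric with $s<3$, use $|\delta\,\mathrm{tr}|\ll Q/m$. The contribution of $I_j$ is $\ll 2^{-2j}\cdot2^{js}\int_{I_j}(M_Rf)^s$. By the distributional estimate this is $\ll2^{-2j}2^{js}\,(2^{-2j}\eta^2)^{s/3}2^{-j(1-s/3)}$, using the layer-cake bound for $s<3$ with the cutoff $\lambda\ge\lambda_0\asymp(2^{-j}\eta^2)^{1/3}$ coming from $|I_j|$. We check that this gives $\eta^{2s/3}2^{-j(3-s)}\cdot$(bounded). Summing over $j$ converges for $s<3$, is logarithmic at $s=3$ with about $\log(1/\eta)$ shells, and for $s>3$ is dominated by the innermost shell $2^{-j}\asymp\eta^{1/3}$. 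There the $L^\infty$ bound $Q\ll\eta^{1/2}2^{-3j/2}$ together with $|\delta\,\mathrm{tr}|\ll Q^{2/3}$ and the $t$-mass $\asymp2^{-3j}$ should give $\eta^{s/2+1/2}$ overall, i.e.\ $W_s\ll\eta^{1/2+1/(2s)}$. I would recheck exponents by balancing: the innermost relevant shell for $s>3$ in trace is where $\eta^{1/2}2^{-j/2}$ is the displacement, which gives exactly $\eta^{1/2+1/(2s)}$ after weighting by $2^{-3j}$ with $2^{-j}\asymp\eta^{1/3}$. The angle metric for $s\ge6$ is the same calculation with $|\Delta|\ll Q/m^2$: the innermost shell contributes $(\eta^{1/3})^s\cdot\eta$, giving $\eta^{1/3+1/s}$, with the shell sum convergent for $s>6$ and logarithmic at $s=6$.

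The main obstacle I expect is the maximal-function step, namely making the level-set lemma compatible with $M_R$. Lemma~\ref{lem:level} controls $f$ only on intervals where $\sin^2\le\varkappa$ on a $b$-neighbourhood, and $b=\pi\lambda/(4\varkappa)$ grows with $\lambda$. So for each $\lambda$ I would enlarge $I_j$ by at most a bounded number of neighbouring shells, accepting a constant-factor change in $\varkappa$. For $\lambda$ so large that $b$ exceeds the shell scale, the bound $|f|\le E$ makes the set empty anyway. Then I would apply the weak-$(1,1)$ maximal inequality to $f\mathbf 1_{\{|f|>\lambda/2\}}$ on the enlarged interval to get $|\{M_Rf>\lambda\}\cap I_j|\ll\lambda^{-1}\int_{|f|>\lambda/2}|f|\ll2^{-2j}\eta^2\lambda^{-3}$. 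This recovers the same weak-$L^3$ bound for $M_Rf$, after which the shell sums above go through.
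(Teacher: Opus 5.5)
Your architecture is the paper's: quantile coupling in the angle variable, $R\asymp\eta^{1/3}$ from Theorem~\ref{thm:Winf}, Lemma~\ref{lem:disp}, dyadic shells at scale $\rho_j\gg\eta^{1/3}$ on which $M_Rf$ only sees $f$ on the enlarged shell $\tilde I_j$, the maximal theorem, and Lemma~\ref{lem:level} with $\varkappa\asymp\rho_j^2$ fed into a layer-cake integral. Several parts are correct:
\begin{itemize}
\item the treatment of the cap $m(\tau)\lesssim\eta^{1/3}$ via $W_\infty$;
\item the trace bound for $s<3$;
\item the angle bound for $s\ge6$, which needs only the pointwise bound $|f|\le\lambda_j:=\sup_{\tilde I_j}E\ll\eta^{1/2}\rho_j^{3/2}$ and gives shell contributions $\eta^{s/2}\rho_j^{3-s/2}$.
\end{itemize}
There is one slip. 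The requirement $b\lesssim\rho_j$ in Lemma~\ref{lem:level} is an \emph{upper} bound $\lambda\lesssim\rho_j^3$, not a lower one. It holds for every $\lambda\le\lambda_j$ once $\rho_j\ge C\eta^{1/3}$. The trivial bound $|\tilde I_j|$ is used below $\lambda_0\asymp(\rho_j\eta^2)^{1/3}$ for a different reason; your last paragraph has this right.

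The genuine gap is the trace metric for $s\ge3$. Your per-shell bound $\eta^{2s/3}2^{-j(3-s)}$ comes from the layer cake for $s<3$, whose constant is $\frac3{3-s}$ and which is dominated by $\lambda\approx\lambda_0$. It is invalid for $s\ge3$, and extrapolating it gives the wrong picture.

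For $s>3$ you claim the innermost shell dominates and yields $\eta^{(s+1)/2}$. But there $Q\ll\eta$, so the trace displacement is $\ll Q^{2/3}\ll\eta^{2/3}$ on $t$-mass $\asymp\eta$, contributing only $\eta^{2s/3+1}$. Your ``balancing'' with displacement $\eta^{1/2}2^{-j/2}$ at $2^{-j}\asymp\eta^{1/3}$ gives the same $\eta^{2/3}$, not $\eta^{\frac12+\frac1{2s}}$.

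The rate $\eta^{\frac12+\frac1{2s}}$ is a bulk effect. There $|f|$ can be $\asymp\eta^{1/2}$ on a set of measure $\asymp\eta^{1/2}$; this is the bump $\mu_2$ of Theorem~\ref{thm:D}. The pointwise bound alone gives only $W_s\ll\eta^{1/2}$ in the bulk.

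What you need is the $s>3$ layer cake, which is dominated by its top end:
$\int_{\tilde I_j}|f|^s\ll_s\int_0^{\lambda_j}\lambda^{s-4}\rho_j^2\eta^2\,d\lambda\ll_s\rho_j^2\eta^2\lambda_j^{s-3}\ll\eta^{\frac{s+1}2}\rho_j^{\frac{3s-5}2}$.
So the shell contributes $\rho_j^{2-s}\int_{\tilde I_j}|f|^s\ll_s\eta^{\frac{s+1}2}\rho_j^{\frac{s-1}2}$ to $W_s^s$. This \emph{increases} with $\rho_j$, so the sum is dominated by $\rho_j\asymp1$ and is $O_s(\eta^{(s+1)/2})$.

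Likewise, at $s=3$ the logarithm does not come from $\asymp\log(1/\eta)$ shells each contributing $\eta^2$; the shells in fact decay geometrically. It comes from $\int_{\lambda_0}^{\lambda_j}d\lambda/\lambda\ll\ell_\eta$ inside each shell, giving $\ll\eta^2\ell_\eta\rho_j$ per shell.

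Your weak-$L^3$ and $L^\infty$ bounds for $M_Rf$ suffice for both corrections. As written, however, the $s\ge3$ trace bounds are not established.
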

\noindent In the bulk, $\int|f|^s\ll\eta^{2s/3}$ for $s<3$, while for $s>3$ the values of $f$ of size $\eta^{1/2}$ dominate and
give $\eta^{(s+1)/2}$; at $s=3$ all scales contribute equally. In the angle metric with $s\ge6$ the cap $\theta\lesssim\eta^{1/3}$ dominates.

\begin{proof}
We may assume $\eta\le\eta_0$ for a small absolute $\eta_0$, since otherwise all bounds are trivial; and $s>1$, since
$W_1\le W_{3/2}$. Since $x\mapsto\arccos(x/2)$ is decreasing, the quantile couplings correspond, and by \eqref{eq:quantile} and the
substitution $t=\Psi(\tau)$,
\[
W_s(\mu,\ST)^s=\int_0^\pi\bigl|2\cos q_{\mu^\theta}(\Psi(\tau))-2\cos\tau\bigr|^s\,d\Psi(\tau),\qquad
W_s(\mu^\theta,\ST^\theta)^s=\int_0^\pi|\Delta(\tau)|^s\,d\Psi(\tau),
\]
with $\Delta(\tau)=q_{\mu^\theta}(\Psi(\tau))-\tau$. By Theorem~\ref{thm:Winf}, $|\Delta|\le R:=C_2\eta^{1/3}$ almost everywhere. Applying
everything to $\mu^-$ as well, it suffices to bound the integrals over $(0,\frac\pi2]$, where $m(\tau)=\tau$. Let
$\tau_*=C_7\eta^{1/3}$ with $C_7\ge4C_2$ large.

\emph{The edge $\tau\le\tau_*$.} Here $M_Rf(\tau)\le\sup_{\theta\le\tau_*+R}E(\theta)\le C\eta$, so by Lemma~\ref{lem:disp} the integrands are
$\le C\eta^{2s/3}$ and $\le C\eta^{s/3}$, and $\Psi(\tau_*)\le C\eta$. The contributions are $\le C\eta^{2s/3+1}$ and $\le C\eta^{s/3+1}$.

\emph{Dyadic regions.} For $\rho_j=\frac\pi2 2^{-j}>\tau_*$ let $I_j=(\rho_j/2,\rho_j]$ and $I_j^*=(\rho_j/4,5\rho_j/4]$. For $\tau\in I_j$,
$M_Rf(\tau)=M_R(f\mathbf 1_{I_j^*})(\tau)$ since $R\le\tau_*/4<\rho_j/4$, and $d\Psi\le\frac2\pi\rho_j^2\,d\tau$. By Lemma~\ref{lem:disp} and
the Hardy--Littlewood maximal theorem ($s>1$), the contributions of $I_j$ are at most
\[
C_s\rho_j^{2-s}\mathcal N_j\quad\text{(trace)},\qquad C_s\rho_j^{2-2s}\mathcal N_j\quad\text{(angle)},\qquad \mathcal N_j=\int_{I_j^*}|f|^s .
\]
On $I_j^*$, since $m(\theta)\le\theta$, $|f|\le\lambda_j:=C_1(\eta+\eta^{1/2}(5\rho_j/4)^{3/2})\le C_6\eta^{1/2}\rho_j^{3/2}$ (as $\rho_j\ge\eta^{1/3}$). Apply Lemma~\ref{lem:level} with $I=I_j^*$ and
$\varkappa=4\rho_j^2$: for $\lambda\le\lambda_j$, $b=\pi\lambda/(16\rho_j^2)\le C\eta^{1/2}\rho_j^{-1/2}\le\rho_j/4$ if $C_7$ is large, so the
$b$-neighbourhood of $I_j^*$ lies in $(0,\frac32\rho_j]$, where $\sin^2\le\varkappa$. Hence
$|\{I_j^*:|f|\ge\lambda\}|\le\min(\rho_j,328\rho_j^2\eta^2\lambda^{-3})$ for $\lambda\le\lambda_j$, and $\mathcal N_j=s\int_0^{\lambda_j}\lambda^{s-1}|\{|f|\ge\lambda\}|\,d\lambda$.
With $\lambda_0=(328\rho_j\eta^2)^{1/3}$ this gives
\begin{align*}
&\text{(a) }\mathcal N_j\le\rho_j\lambda_j^s\le C_s\eta^{s/2}\rho_j^{1+3s/2};&&\text{(b) }\mathcal N_j\le C_s\rho_j\lambda_0^s\le C_s\eta^{2s/3}\rho_j^{1+s/3}\ (s<3);\\
&\text{(c) }\mathcal N_j\le C\rho_j^2\eta^2\ell_\eta;&&\text{(d) }\mathcal N_j\le C_s\rho_j^2\eta^2\lambda_j^{s-3}\le C_s\eta^{\frac{s+1}2}\rho_j^{\frac{3s-5}2}\ (s>3).
\end{align*}

\emph{Trace metric.} By (b), (c), (d), the contribution of $I_j$ is $\le C_s\eta^{2s/3}\rho_j^{3-2s/3}$ for $s<3$, $\le C\eta^2\ell_\eta\rho_j$ for
$s=3$, and $\le C_s\eta^{(s+1)/2}\rho_j^{(s-1)/2}$ for $s>3$. The exponents of $\rho_j$ are positive, so the sums over $j$ are
$\ll_s\eta^{2s/3}$, $\eta^2\ell_\eta$ and $\eta^{(s+1)/2}$, which dominate the edge contribution. This gives the trace bounds.

\emph{Angle metric, $s\ge6$.} Let $\rho_*=\eta^{1/7}$. For $\rho_j\le\rho_*$ use (a): the contribution is $\le C_s\eta^{s/2}\rho_j^{3-s/2}$,
whose sum is $\ll\eta^3\ell_\eta$ if $s=6$ and $\ll_s\eta^{s/3+1}$ if $s>6$ (dominated by $\rho_j\approx\tau_*$). For $\rho_j>\rho_*$ use
(d): the contribution is $\le C_s\eta^{(s+1)/2}\rho_j^{-(s+1)/2}$, whose sum is $\ll_s\eta^{3(s+1)/7}\le\eta^{s/3+1}$, as $s\ge6$. With the
edge term $C\eta^{s/3+1}$ this gives $W_s(\mu^\theta,\ST^\theta)^s\ll_s\eta^{s/3+1}$ for $s>6$ and $\ll\eta^3\ell_\eta$ for $s=6$.
\end{proof}

\begin{proof}[Proof of the angle bounds in Theorem~\ref{thm:C}]
For $s\le2$, $W_s\le W_2\le(\pi^2+3.3)\eta^{2/3}$ by Theorem~\ref{thm:W2}; for $s\ge6$ use Theorem~\ref{thm:sharp}, and for $s=\infty$
Theorem~\ref{thm:Winf}. For $2<s<6$ write $\frac1s=\frac{1-t}2+\frac t6$, i.e.\ $t=\frac32-\frac3s$. By \eqref{eq:quantile} and H\"older's
inequality (log-convexity of $L^p$ norms) applied to $\Delta=q_{\mu^\theta}-q_{\ST^\theta}$ on $(0,1)$,
\[
W_s(\mu^\theta,\ST^\theta)\le W_2^{1-t}W_6^t\ll\eta^{\frac23(1-t)}\bigl(\eta^{1/2}\ell_\eta^{1/6}\bigr)^t=\eta^{\frac5{12}+\frac1{2s}}\ell_\eta^{\frac{s-2}{4s}} .\qedhere
\]
\end{proof}

\section{Optimality: proof of Theorem~\ref{thm:D}}\label{sec:sharp}

We construct absolutely continuous measures $\nu$ and then discretize them (Lemma~\ref{lem:discr}). The three constructions
correspond to the three regimes of Theorem~\ref{thm:C}:
Rudin--Shapiro polynomials give many small oscillations of the density with maximal moments up to $k\approx\delta^{-2/3}$ (optimal
for $s\le3$); a single bump in the bulk moves mass $\asymp\delta^{1/2}$ by $\asymp\delta^{1/2}$ (optimal for $D$, for $W_s$ with $s>3$
and for $W_\infty$ in the trace metric); and a hole at the edge removes the mass of a cap of angle $\asymp\delta^{1/3}$ (optimal for
$W_\infty$ and $s>6$ in the angle metric).

\subsection{Discretization}
\begin{lemma}\label{lem:discr}
Let $\nu$ be a probability on $[-2,2]$ with $\nu\le2\ST$, let $0<\delta\le\frac1{10}$, and suppose $|\widehat\nu(k)|\le\frac9{10}(k+1)\delta$ for
$k\ge1$. Let $L\ge42\delta^{-2}$ be an integer, $x_i=q_\nu((i-\frac12)/L)$ for $1\le i\le L$, and $\mu=\frac1L\sum_i\delta_{x_i}$. Then
$|\widehat\mu(k)|\le(k+1)\delta$ for all $k\ge1$, $\sup|F_\mu-F_\nu|\le\frac1{2L}$ and $W_1(\mu,\nu)\le\frac2L$. The $x_i$ lie in the closed
support of $\nu$, and $W_\infty(\mu,\nu)$ is at most the largest length of the intervals $[q_\nu(\frac{i-1}L),q_\nu(\frac iL)]$, where
$q_\nu(0)=\min\operatorname{supp}\nu$.
\end{lemma}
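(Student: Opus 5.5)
The plan is to compare $\mu$ with $\nu$ through the quantile coupling. Couple $\mu$ and $\nu$ by $t\mapsto(q_\mu(t),q_\nu(t))$ on $(0,1)$. For $t\in((i-1)/L,i/L]$ we have $q_\mu(t)=x_i=q_\nu((i-\frac12)/L)$. Both $q_\mu(t)$ and $q_\nu(t)$ therefore lie in $J_i=[q_\nu(\frac{i-1}L),q_\nu(\frac iL)]$, since $q_\nu$ is nondecreasing. Three statements follow at once:
\begin{itemize}
\item the $x_i$ lie in the closed support of $\nu$;
\item $W_\infty(\mu,\nu)\le\max_i|J_i|$;
\item $W_1(\mu,\nu)\le\frac1L\sum_i|J_i|\le\frac1L\bigl(q_\nu(1)-q_\nu(0)\bigr)\le\frac4L$.
\end{itemize}
The last bound is off by a factor $2$ from the claimed $\frac2L$. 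To recover it, I would note that on each $J_i$ the point $x_i$ is the median of $\nu|_{J_i}$, so the integrated distance is at most $\frac12|J_i|\cdot\frac1L$ per cell. Equivalently, use $W_1=\int|F_\mu-F_\nu|\,dx$ together with $|F_\mu-F_\nu|\le\frac1{2L}$ on an interval of length $4$. The Kolmogorov bound itself is immediate: $F_\mu$ jumps by $\frac1L$ exactly at the points where $F_\nu$ crosses $(i-\frac12)/L$. If $\nu$ has atoms, one has to check that this is still consistent; but $\nu\le2\ST$ is absolutely continuous, so $F_\nu$ is continuous and everything is clean.

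The main step is the moment bound. Write $\widehat\mu(k)-\widehat\nu(k)=\int U_k\,d(\mu-\nu)$ and integrate by parts:
\[
\widehat\mu(k)-\widehat\nu(k)=-\int_{-2}^2 U_k'(x)\,\bigl(F_\mu-F_\nu\bigr)(x)\,dx .
\]
This gives $|\widehat\mu(k)-\widehat\nu(k)|\le\frac1{2L}\int_{-2}^2|U_k'|$. Since $U_k$ oscillates between its extrema, the total variation $\int|U_k'|$ is at most a constant times $k(k+1)$. Indeed $|U_k|\le k+1$ and $U_k$ has $k$ monotone pieces, so the variation is at most about $2k(k+1)$. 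Hence $|\widehat\mu(k)-\widehat\nu(k)|\le k(k+1)/L$, which is below $\frac1{10}(k+1)\delta$ only for $k\le L\delta/10$. That is fine for small $k$, but for large $k$ we need something else.

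For large $k$ I would use a different estimate: either the trivial $|\widehat\mu(k)|\le k+1$, which fails, or a cell-by-cell error bound that exploits $\nu\le2\ST$. For $k$ with $(k+1)\delta\ge1$ the claim $|\widehat\mu(k)|\le(k+1)\delta$ is trivial, because $|U_k|\le k+1$ gives $|\widehat\mu(k)|\le k+1$. That is not enough either; what we need is $|U_k|\le$ something with $\delta$. So the real range is $k\le\delta^{-1}$, and there we need an error $\le\frac1{10}(k+1)\delta$.

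To get this on $k\le\delta^{-1}$, I would bound each cell by the oscillation of $U_k$ on $J_i$ times $\frac1L$. I would control that oscillation via $|U_k'(2\cos\theta)|\ll k^2\min(1,1/(k\sin\theta))\cdot\ldots$, in the angle variable, together with $\nu\le2\ST$, which makes the cells short near the edges in angle. This is where $L\ge42\delta^{-2}$ should enter: the sum of squared errors should be $\ll k^2/L\le k^2\delta^2/42$. Concretely, I would use the second-order midpoint cancellation. Since $x_i$ is the $\nu$-median of $J_i$, the first-order term only gives $\frac1L\sum_i\mathrm{osc}_{J_i}U_k$. I expect this bounded-variation-in-angle computation, giving $\le\frac1{10}(k+1)\delta$ for all $k\le\delta^{-1}$ with the constant $42$, to be the main obstacle. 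I would try the angle parametrization first, where $U_k$ has variation $\ll k^2$ and the cells have $\ST^\theta$-mass $\le2/L$.
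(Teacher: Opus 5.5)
Your arguments for the Kolmogorov bound, for $W_1(\mu,\nu)\le\frac2L$ via $\int|F_\mu-F_\nu|\,dx$, for $W_\infty$ and for the support statement match the paper. So does the integration by parts for small $k$. The gap is the moment bound for large $k$, which you never prove.

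Your variation estimate covers only $k\le L\delta/10$, i.e.\ $k\lesssim 4.2\,\delta^{-1}$. The lemma, however, asserts $|\widehat\mu(k)|\le(k+1)\delta$ for \emph{all} $k\ge1$. Your remark that for $(k+1)\delta\ge1$ the claim is trivial because $|U_k|\le k+1$ is false: that gives $|\widehat\mu(k)|\le k+1$, not $(k+1)\delta$. As a result, the range you single out as the main obstacle, $k\le\delta^{-1}$, is already handled by your own integration by parts. The genuinely hard range, $k\to\infty$, is left open.

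A cell-by-cell oscillation or midpoint-cancellation estimate of $\widehat\mu(k)-\widehat\nu(k)$ cannot close this. The variation of $U_k$ is unbounded in $k$ (it is of order $k\log k$, so even an optimal version only reaches $k\le e^{c/\delta}$). Once $k\gg L$, $U_k$ oscillates fully inside each cell, so there is nothing left to cancel. Indeed $\widehat\nu(k)\to0$, while $\widehat\mu(k)$ does not, $\mu$ being discrete.

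The missing idea is a bound on $|\widehat\mu(k)|$ itself that is uniform in $k$, to be compared with the growing target $(k+1)\delta$.
\begin{itemize}
\item Since $U_k(2\cos\theta)=\sin((k+1)\theta)/\sin\theta$, one has $|U_k(x)|\le g(x):=2/\sqrt{4-x^2}$ for every $k$. Hence $|\widehat\mu(k)|\le\frac1L\sum_ig(x_i)$.
\item The function $g$ decreases on $[-2,0]$ and increases on $[0,2]$. So each cell $[\frac{i-1}L,\frac iL]$ contains a half-cell of length $\frac1{2L}$ on which $g(q_\nu(t))\ge g(x_i)$: the right half if $x_i\ge0$, the left half if $x_i<0$.
\item Since these half-cells are disjoint, $\frac1L\sum_ig(x_i)\le2\int_0^1g(q_\nu(t))\,dt=2\int g\,d\nu\le4\int g\,d\ST=\frac{16}\pi$.
\end{itemize}
This is where $\nu\le2\ST$ is really used, not merely for absolute continuity. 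Thus $|\widehat\mu(k)|<5.1<(k+1)\delta$ for $k>k_0:=\lceil5.1/\delta\rceil$.

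The two ranges must then overlap, and with your constant they do not. Your bound $V(U_k)\le2k(k+1)$ reaches only $k\le4.2\,\delta^{-1}$ when $L=42\delta^{-2}$. Use instead $|\frac{d}{d\theta}U_k(2\cos\theta)|\le\sum_{|j|\le k,\,j\equiv k\,(2)}|j|\le\frac{(k+1)^2}2$, so that $V(U_k)\le\frac\pi2(k+1)^2$. The error is then $\frac{\pi(k+1)^2}{4L}\le\frac{5.3\pi}{168}(k+1)\delta<\frac1{10}(k+1)\delta$ for $k+1\le5.3/\delta$. Because $\delta\le\frac1{10}$, this covers every $k\le k_0$.
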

\begin{proof}
$F_\nu(x_i)=(i-\frac12)/L$ and $F_\mu$ jumps from $\frac{i-1}L$ to $\frac iL$ at $x_i$, so $\sup|F_\mu-F_\nu|\le\frac1{2L}$ and
$W_1(\mu,\nu)=\int|F_\mu-F_\nu|\le\frac2L$. For $t\in(\frac{i-1}L,\frac iL]$, $q_\mu(t)=x_i$ and $q_\nu(t)$ lie in the $i$-th interval, which
gives the claim on $W_\infty$.

\emph{Small $k$.} Integrating by parts, $|\int f\,d(\mu-\nu)|\le V(f)\sup|F_\mu-F_\nu|$ for $f\in C^1$. Since
$U_k(2\cos\theta)=\sum_{|j|\le k,\,j\equiv k\,(2)}e^{ij\theta}$, $|\frac d{d\theta}U_k(2\cos\theta)|\le\frac{(k+1)^2}2$ and $V(U_k)\le\frac\pi2(k+1)^2$.
Let $k_0=\lceil5.1/\delta\rceil$, so $k_0+1\le5.3/\delta$. For $k\le k_0$,
\[
|\widehat\mu(k)|\le\tfrac9{10}(k+1)\delta+\frac{\pi(k+1)^2}{4L}\le(k+1)\Bigl(\tfrac9{10}\delta+\frac{5.3\pi}{168}\delta\Bigr)\le(k+1)\delta .
\]
\emph{Large $k$.} Let $g(x)=2/\sqrt{4-x^2}$, so $|U_k|\le g$. As $g$ decreases on $[-2,0]$ and increases on $[0,2]$, for each $i$ there is
a half-cell $J_i\subset[\frac{i-1}L,\frac iL]$ of length $\frac1{2L}$ on which $g(q_\nu(t))\ge g(x_i)$; the $J_i$ are disjoint, so for $k>k_0$
\[
|\widehat\mu(k)|\le\frac1L\sum_ig(x_i)\le2\int_0^1g(q_\nu(t))\,dt=2\int g\,d\nu\le4\int g\,d\ST=\frac{16}\pi<5.1\le k_0\delta<(k+1)\delta.\qedhere
\]
\end{proof}

\subsection{Perturbations of $\ST$}
For a bounded function $r$ on $[0,\pi]$ with $r\ge-1$ and $\int_0^\pi r\sin^2\theta\,d\theta=0$, let $\nu=(1+r)\ST$ (in the angle
variable). Since $U_k(2\cos\theta)\sin\theta=\sin((k+1)\theta)$,
\begin{equation}\label{eq:mom}
\widehat\nu(k)=\frac2\pi\int_0^\pi\sin((k+1)\theta)\,r(\theta)\sin\theta\,d\theta\qquad(k\ge1).
\end{equation}

\subsection{Rudin--Shapiro polynomials}
Let $p_0=q_0=1$ and $p_{n+1}(z)=p_n(z)+z^{2^n}q_n(z)$, $q_{n+1}(z)=p_n(z)-z^{2^n}q_n(z)$ \cite{Rudin}. For $N=2^n$ these have degree
$N-1$, coefficients $\pm1$, and $|p_n|^2+|q_n|^2=2N$ on $|z|=1$; in particular $|p_n|,|q_n|\le\sqrt{2N}$ on $|z|\le1$.

Fix $N=2^n$, $M=N$, and let $a_0,\dots,a_{N-1}\in\{\pm1\}$ be the coefficients of $p=p_n$ or of $p=q_n$. Put $m_j=2(M+j)$ and
\[
Q(\theta)=\sum_{j<N}a_j\cos(m_j\theta)=\operatorname{Re}\bigl(z^Mp(z)\bigr),\qquad P=-Q'=\sum_{j<N}m_ja_j\sin(m_j\theta),\qquad z=e^{2i\theta}.
\]
\begin{lemma}\label{lem:RS}
\textup{(i)} $\|P\|_\infty\le4\sqrt2N^{3/2}$.
\textup{(ii)} $R(\theta)=\sum_ja_j\sin(m_j\theta)/m_j$ satisfies $\|R\|_\infty\le(2N)^{-1/2}$.
\textup{(iii)} For one of the two choices $p\in\{p_n,q_n\}$, $\int_0^\pi\sin^4\theta\,|Q(\theta)|\,d\theta\ge\frac{3\pi}{16}\sqrt{N/2}$.
\end{lemma}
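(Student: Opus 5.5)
Each of the three parts uses the basic identity $|p|^2+|q|^2=2N$ on $|z|=1$ for the Rudin--Shapiro pair $(p_n,q_n)$, together with $z=e^{2i\theta}$ and $|z^M|=1$.

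\emph{Part (i).} I would write $P=-Q'$ and differentiate $Q=\operatorname{Re}(z^Mp(z))$ with respect to $\theta$. Since $dz/d\theta=2iz$,
\[
Q'(\theta)=\operatorname{Re}\bigl(2iz\,(Mz^{M-1}p(z)+z^Mp'(z))\bigr),\qquad\text{so}\qquad |P|\le 2M|p|+2|p'| .
\]
The first term is at most $2N\sqrt{2N}$. For the second, Bernstein's inequality for the polynomial $p$ of degree $N-1$ gives $\|p'\|_\infty\le(N-1)\|p\|_\infty\le(N-1)\sqrt{2N}$ on $|z|=1$. Altogether $\|P\|_\infty\le 2\sqrt{2N}\,(N+N-1)<4\sqrt2N^{3/2}$, as required.

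\emph{Part (ii).} I would avoid summing by parts and instead write $R$ as an integral of $Q$. Since $\frac{d}{d\theta}\frac{-\cos m\theta}{m^2}=\frac{\sin m\theta}{m}$, we have $R=-\tilde Q$, where $\tilde Q=\sum_ja_j\cos(m_j\theta)/m_j^2$ is the second antiderivative of $-Q$. A cleaner route is
\[
\frac{\sin(m\theta)}{m}=\int_0^\theta\cos(mt)\,dt .
\]
However, this only gives $|R|\le\pi\|Q\|_\infty=\pi\sqrt{2N}$, which is far too weak.

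So I would work with the representation $1/m=\int_0^\infty e^{-mu}\,du$. This gives
\[
R(\theta)=\operatorname{Im}\int_0^\infty \sum_j a_j e^{-m_ju+im_j\theta}\,du=\operatorname{Im}\int_0^\infty w^M p(w)\,du,\qquad w=e^{-2u+2i\theta},\ |w|\le1 .
\]
Hence
\[
|R|\le\int_0^\infty e^{-2Mu}\sqrt{2N}\,du=\frac{\sqrt{2N}}{2M}=(2N)^{-1/2}.
\]
This step uses only the maximum principle bound $|p|\le\sqrt{2N}$ on $|z|\le1$, and it is exactly the claimed bound.

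\emph{Part (iii).} I would sum the two choices and show that the sum is at least twice the claimed bound. Write $Q_p$ and $Q_q$ for the two versions of $Q$, and set $\phi=\arg(z^Mp)$ and $\psi=\arg(z^Mq)$. Then
\[
|Q_p|+|Q_q|=|p|\,|\cos\phi|+|q|\,|\cos\psi| .
\]
The phases are the obstacle: a pointwise bound fails because both cosines could be small at the same time. I would handle this through the averaging tool
\[
|\operatorname{Re}X|\ge\frac{(\operatorname{Re}X)^2}{|X|},
\]
which gives
\[
\int\sin^4\theta\,|Q_p|\,d\theta\ge\frac{1}{\sqrt{2N}}\int\sin^4\theta\,Q_p^2\,d\theta .
\]
With $(\operatorname{Re}X)^2=\frac12\bigl(|X|^2+\operatorname{Re}X^2\bigr)$, adding the two choices gives
\[
\int\sin^4\theta\,(Q_p^2+Q_q^2)\,d\theta=\frac12\int\sin^4\theta\,\bigl(2N+\operatorname{Re}(z^{2M}(p^2+q^2))\bigr)\,d\theta .
\]
Now $\sin^4\theta=\frac38-\frac12\cos2\theta+\frac18\cos4\theta$ has frequencies at most $4$ in $\theta$. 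The term $z^{2M}(p^2+q^2)$ contains only frequencies $e^{2ik\theta}$ with $k\ge 2M=2N$, which is at least $4$ once $N\ge2$. So for $N\ge 2$ this term integrates to zero against $\sin^4\theta$. Hence
\[
\int_0^\pi\sin^4\theta\,(Q_p^2+Q_q^2)\,d\theta=\frac12\cdot2N\cdot\frac{3\pi}{8}=\frac{3\pi N}{8} .
\]
Dividing by $\sqrt{2N}$, the sum of the two integrals $\int\sin^4\theta\,|Q|$ is at least $\frac{3\pi N}{8\sqrt{2N}}=\frac{3\pi}{8}\sqrt{N/2}$. Therefore one of the two choices gives at least $\frac{3\pi}{16}\sqrt{N/2}$.

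The case $N=1$ has to be checked separately. There $p=q=1$ up to sign and $Q=\pm\cos 2\theta$, and $\int\sin^4\theta\,|\cos2\theta|\,d\theta$ can be compared directly with $\frac{3\pi}{16}\cdot2^{-1/2}\approx0.42$.
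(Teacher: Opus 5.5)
Your proof is correct and follows the paper's argument essentially step for step. Part (i) is the same Bernstein estimate. In (ii) your integral $\int_0^\infty w^Mp(w)\,du$ is exactly the paper's $\frac12\int_0^1 s^{M-1}z^Mp(sz)\,ds$ after the substitution $s=e^{-2u}$. Part (iii) is the same combination of $|Q|\ge Q^2/\sqrt{2N}$, the Rudin--Shapiro identity and orthogonality against $\sin^4\theta$.

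Your separate treatment of $N=1$ is a point the paper passes over silently. Its orthogonality step needs $4N\ge6$, which is harmless since the lemma is only used with $N\ge2048$. The check does go through: there $\int_0^\pi\sin^4\theta\,|\cos2\theta|\,d\theta=\frac56>\frac{3\pi}{16\sqrt2}$.
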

\begin{proof}
(i) $\sum_jm_ja_je^{im_j\theta}=z^M(2Mp(z)+2zp'(z))$, and by Bernstein's inequality $|p'|\le(N-1)\sqrt{2N}$ on $|z|=1$; so
$|P|\le(2N+2N)\sqrt{2N}$.
(ii) $\sum_ja_jz^{M+j}/(M+j)=\int_0^1s^{M-1}z^Mp(sz)\,ds$ has modulus at most $\sqrt{2N}/M$, and $R=\frac12\operatorname{Im}$ of this.
(iii) Let $Q_p,Q_q$ be built from $p_n,q_n$. Since $\operatorname{Re}(w)^2=\frac12(|w|^2+\operatorname{Re}w^2)$,
\[
Q_p^2+Q_q^2=\tfrac12\bigl(|p_n|^2+|q_n|^2\bigr)(z)+\tfrac12\operatorname{Re}\bigl(z^{2M}(p_n^2+q_n^2)(z)\bigr)=N+\sum_{l\ge4M}f_l\cos(l\theta)
\]
with real $f_l$, $l$ even. As $\sin^4\theta=\frac38-\frac12\cos2\theta+\frac18\cos4\theta$, $\int_0^\pi\sin^4\theta\cos(l\theta)\,d\theta=0$ for even
$l\ge6$, so $\int_0^\pi\sin^4\theta(Q_p^2+Q_q^2)\,d\theta=\frac{3\pi}8N$. Since $|Q_p|,|Q_q|\le\sqrt{2N}$,
$\int_0^\pi\sin^4\theta(|Q_p|+|Q_q|)\,d\theta\ge\frac1{\sqrt{2N}}\cdot\frac{3\pi}8N=\frac{3\pi}8\sqrt{N/2}$.
\end{proof}

\begin{prop}\label{prop:nuN}
Let $N=2^n\ge2048$, $0<t\le(4\sqrt2N^{3/2})^{-1}$, $p$ as in Lemma~\ref{lem:RS}\textup{(iii)}, $r=t\sin\theta\,P(\theta)$ and
$\nu=(1+r)\ST$. Then $0\le\nu\le2\ST$, $|\widehat\nu(k)|\le(k+1)t$ for $k\ge1$, and $W_1(\nu,\ST)\ge0.50\,t\sqrt N$.
\end{prop}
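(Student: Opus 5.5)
The plan is to check the three assertions of Proposition~\ref{prop:nuN} in turn, working in the angle variable via \eqref{eq:mom} and the explicit Fourier structure of $P$. The order is: positivity and normalization, then the moment bound (an exact orthogonality computation), then the $W_1$ lower bound (a formula for $F_\nu-G$ plus Lemma~\ref{lem:RS}(iii)). The $W_1$ step is the one needing care.

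\emph{Positivity and mass.} By Lemma~\ref{lem:RS}(i), $|r|\le t\|P\|_\infty\le t\cdot4\sqrt2N^{3/2}\le1$, so $0\le1+r\le2$ and $0\le\nu\le2\ST$. For $\nu$ to be a probability we need $\int_0^\pi r\sin^2\theta\,d\theta=t\int_0^\pi\sin^3\theta\,P(\theta)\,d\theta=0$. Here $\sin^3\theta$ has only the frequencies $1$ and $3$, and $P$ only the even frequencies $m_j\ge2N$. Each product $\sin(a\theta)\sin(m_j\theta)$ with $a\in\{1,3\}$ is a combination of $\cos(l\theta)$ with $l$ odd and nonzero, so it integrates to $0$ over $[0,\pi]$.

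\emph{Moments.} By \eqref{eq:mom},
\[
\widehat\nu(k)=\frac{2t}\pi\int_0^\pi\sin((k+1)\theta)\sin^2\theta\,P(\theta)\,d\theta .
\]
I will use the identity $\sin^2\theta\sin(m\theta)=\frac12\sin m\theta-\frac14\sin(m+2)\theta-\frac14\sin(m-2)\theta$ together with $\int_0^\pi\sin A\theta\sin B\theta\,d\theta=\frac\pi2\delta_{AB}$ for positive integers $A,B$. Only the indices $j$ with $m_j\in\{k+1,k+3,k-1\}$ contribute, each at most once. This gives
\[
|\widehat\nu(k)|\le t\Bigl(\tfrac12(k+1)+\tfrac14(k+3)+\tfrac14(k-1)\Bigr)=(k+1)t .
\]

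\emph{$W_1$ lower bound.} I will use $W_1(\nu,\ST)=\int_{-2}^2|F_\nu-G|\,dx$ with $x=2\cos\tau$, $dx=2\sin\tau\,d\tau$. Up to sign, $F_\nu(2\cos\tau)-G(2\cos\tau)$ equals
\[
\frac{2t}\pi\int_\tau^\pi\sin^3\theta\,P(\theta)\,d\theta .
\]
Since $P=-Q'$, integration by parts (using $\sin\pi=0$) turns this into
\[
\frac{2t}\pi\Bigl(\sin^3\tau\,Q(\tau)+3\int_\tau^\pi\sin^2\theta\cos\theta\,Q(\theta)\,d\theta\Bigr).
\]
In the remaining integral I write $Q=R'$ and integrate by parts once more. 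With Lemma~\ref{lem:RS}(ii) this bounds it by an absolute constant times $\|R\|_\infty\le(2N)^{-1/2}$.

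Inserting this into the $W_1$ integral, the main term is
\[
\frac{4t}\pi\int_0^\pi\sin^4\tau\,|Q(\tau)|\,d\tau\ \ge\ \frac{4t}\pi\cdot\frac{3\pi}{16}\sqrt{N/2}\ =\ \frac34\,t\sqrt{N/2}\ \approx\ 0.53\,t\sqrt N,
\]
by Lemma~\ref{lem:RS}(iii) for the right choice of $p$. The error term is $O(tN^{-1/2})$ with a small explicit constant. This is where the hypothesis $N\ge2048$ enters: it makes the error below $0.03\,t\sqrt N$, leaving $W_1\ge0.50\,t\sqrt N$.

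The only delicate step is keeping that constant honest. The boundary terms in the second integration by parts are bounded by $2\|R\|_\infty$, and $\|(\sin^2\cos)'\|_1$ is at most $4$. Hence the error is at most roughly
\[
\frac{2t}\pi\cdot3\cdot6\,(2N)^{-1/2}\cdot4\ \ll\ 0.03\,t\sqrt N,
\]
which is comfortably within the required margin.
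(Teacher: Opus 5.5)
Your proposal is correct and follows the paper's proof essentially step for step: the same pointwise bound $|r|\le t\|P\|_\infty\le1$, the same three-term sine-coefficient computation giving $|\widehat\nu(k)|\le(k+1)t$, and the same double integration by parts ($P=-Q'$, then $Q=R'$), which isolates the main term $\frac{4t}{\pi}\int_0^\pi\sin^4\theta\,|Q|\,d\theta$ so that Lemma~\ref{lem:RS}(iii) can be applied. The only differences are cosmetic: you integrate over $[\tau,\pi]$ rather than $[0,\tau]$, and you use $\|w'\|_1\le4$ (in fact $\|w'\|_1=8/(3\sqrt3)$) instead of the paper's $|w'|\le3$, which gives an error of about $32.4\,tN^{-1/2}\le0.016\,t\sqrt N$ and so leaves more slack than the paper's $0.028\,t\sqrt N$.
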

\begin{proof}
\emph{Positivity.} $|r|\le t\|P\|_\infty\le1$ by Lemma~\ref{lem:RS}(i).

\emph{Moments.} By \eqref{eq:mom}, $\widehat\nu(k)=t\mathfrak d_{k+1}$, where $\mathfrak d_l$ is the $l$-th sine coefficient of $\sin^2\theta\,P(\theta)$. From
$\sin^2\theta\sin m\theta=\frac12\sin m\theta-\frac14\sin(m+2)\theta-\frac14\sin(m-2)\theta$ (all $m_j\ge4$), with $e_m=a_j$ if $m=m_j$ and
$e_m=0$ otherwise, $\mathfrak d_l=\frac12le_l-\frac14(l-2)e_{l-2}-\frac14(l+2)e_{l+2}$, so $|\mathfrak d_l|\le l$. Only even $l$ occur, so $\mathfrak d_1=0$, i.e.\
$\int r\sin^2\theta\,d\theta=0$.

\emph{Lower bound.} Let $H(x)=\nu((-\infty,x])-G(x)$. Since $\{x'\le x\}=\{\theta'\ge\theta\}$,
$H(2\cos\theta)=-\frac2\pi\int_0^\theta r(u)\sin^2u\,du$ and $W_1(\nu,\ST)=\int_0^\pi|H(2\cos\theta)|\,2\sin\theta\,d\theta$. Put
$I(\theta)=\int_0^\theta\sin^3u\,P(u)\,du$, so $|H(2\cos\theta)|=\frac{2t}\pi|I(\theta)|$. Integrating by parts twice, with
$w(u)=\sin^2u\cos u$ and $R'=Q$,
\[
I(\theta)=-\sin^3\theta\,Q(\theta)+3J(\theta),\qquad J(\theta)=\int_0^\theta wQ\,du=w(\theta)R(\theta)-\int_0^\theta w'R\,du .
\]
Since $|w|\le1$ and $|w'|\le3$, Lemma~\ref{lem:RS}(ii) gives $|J|\le(1+3\pi)(2N)^{-1/2}\le7.5N^{-1/2}$. Hence, by Lemma~\ref{lem:RS}(iii),
\[
W_1(\nu,\ST)\ge\frac{4t}\pi\Bigl(\int_0^\pi\sin^4\theta\,|Q|\,d\theta-6\|J\|_\infty\Bigr)\ge\frac{4t}\pi\Bigl(\frac{3\pi}{16\sqrt2}-\frac{45}N\Bigr)\sqrt N\ge0.50\,t\sqrt N,
\]
using $N\ge2048$.
\end{proof}

\subsection{A bump in the bulk}
\begin{prop}\label{prop:bulk}
Let $0<\varepsilon\le\frac1{10}$, $c=0.3$, and let $\nu$ have density $\varrho(x)+c\phi'(x/\varepsilon)$, where $\phi(u)=\max(0,1-|u|)$. Then
$\frac1{20}\ST\le\nu\le2\ST$, $|\widehat\nu(k)|\le0.155(k+1)\varepsilon^2$ for $k\ge1$, $D(\nu)\ge c\varepsilon$, $W_\infty(\nu,\ST)\ge\pi c\varepsilon$ and
$W_s(\nu,\ST)\ge0.159^{1/s}\cdot0.0375\,\varepsilon^{1+1/s}$ for $1\le s<\infty$.
\end{prop}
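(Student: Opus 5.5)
The plan is to treat the perturbation $c\phi'(x/\varepsilon)$ explicitly: it equals $+c$ on $(-\varepsilon,0)$, $-c$ on $(0,\varepsilon)$ and $0$ elsewhere. Every claim reduces to elementary estimates near $x=0$, where $\varrho\approx1/\pi$.

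\emph{Positivity and domination.} On $[-\varepsilon,\varepsilon]\subset[-0.1,0.1]$ we have $\varrho\ge\frac1{2\pi}\sqrt{3.99}>0.3179$. Hence $\varrho-c\ge0.0179\ge\varrho/20$, because $\varrho\le1/\pi$ there. Also $\varrho+c\le2\varrho$, since $c=0.3<\varrho$. Outside $[-\varepsilon,\varepsilon]$ the density is exactly $\varrho$. The perturbation has total mass $0$, so $\nu$ is a probability measure with $\frac1{20}\ST\le\nu\le2\ST$.

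\emph{Moments.} I will use the antiderivative of $U_k$ in the angle variable: with $x=2\cos\theta$, $V(x)=2\cos((k+1)\theta)/(k+1)$ satisfies $V'=U_k$. Integrating $U_k$ against the step function gives
\[
c\int U_k(x)\phi'(x/\varepsilon)\,dx=-c\bigl(V(\varepsilon)+V(-\varepsilon)-2V(0)\bigr),
\]
a second difference. It is bounded by $c\varepsilon^2\sup_{|x|\le\varepsilon}|(V\circ\theta)''(x)|$. Now
\[
(V\circ\theta)''=V_{\theta\theta}\,\theta'^2+V_\theta\,\theta'',\qquad |V_{\theta\theta}|\le2(k+1),\quad |V_\theta|\le2,
\]
and $\theta'(x)^2=1/(4-x^2)\le1/3.99$, while $|\theta''(x)|=|x|(4-x^2)^{-3/2}\le0.0126$ for $|x|\le0.1$. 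This yields
\[
|\widehat\nu(k)|\le0.3\,\varepsilon^2\bigl(0.5013(k+1)+0.026\bigr)\le0.155(k+1)\varepsilon^2,
\]
using $\int U_k\,d\ST=0$. This numerical bookkeeping is the only delicate step: a naive bound via $|U_k'|\ll(k+1)^2$ loses a factor $k$. The point is to use the angle antiderivative, whose second $\theta$-derivative is only $O(k+1)$.

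\emph{Kolmogorov and $W_\infty$.} Put $H=F_\nu-G$. Then $H(x)=c(x+\varepsilon)$ on $[-\varepsilon,0]$ and $H(x)=c(\varepsilon-x)$ on $[0,\varepsilon]$, so $D(\nu)\ge H(0)=c\varepsilon$. For $W_\infty$, suppose $W_\infty(\nu,\ST)<w$. Then the quantile coupling moves no mass by more than $w$, so $F_\nu(0)\le G(w)$. This forces $G(w)-G(0)\ge c\varepsilon$. Since $\varrho\le1/\pi$, this gives $w\ge\pi c\varepsilon$.

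\emph{$W_s$ lower bound.} Take $t\in[G(-\varepsilon/2),G(0)]$, an interval of length $\ge0.159\,\varepsilon$, since $\varrho\ge0.3179$ there. For such $t$ the point $x=\mathfrak q(t)\in[-\varepsilon/2,0]$ has $H(x)\ge c\varepsilon/2$. As $F_\nu$ is continuous and increasing, $q_\nu(t)=y$ solves $F_\nu(y)=G(x)$, so
\[
G(x)-G(y)=H(y)\ge\tfrac{c\varepsilon}{2}\quad\text{(or }y\le-\varepsilon\text{)}.
\]
Using $\varrho\le1/\pi$ again, $x-y\ge\min(\pi c\varepsilon/2,\varepsilon/2)\ge0.0375\,\varepsilon$ with room to spare; I will check that the constant comes out as stated, taking care near the case $y<-\varepsilon$. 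Then \eqref{eq:quantile} gives
\[
W_s^s\ge0.159\,\varepsilon\,(0.0375\,\varepsilon)^s,
\]
which is the claim.
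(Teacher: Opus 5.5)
\textbf{Verdict.} Everything except the $W_s$ step is correct and follows the paper's proof: the positivity check, the moment bound, $D(\nu)\ge c\varepsilon$ and $W_\infty(\nu,\ST)\ge\pi c\varepsilon$. Your second difference $V(\varepsilon)+V(-\varepsilon)-2V(0)$ is exactly the paper's $\varepsilon^2\int U_k'(\varepsilon u)\phi(u)\,du$. Your chain-rule bound for $V''=U_k'$ is the same estimate $\frac{k+1}{2\sin^2\theta}+\frac{|\cos\theta|}{2\sin^3\theta}$.

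\textbf{The error.} In the $W_s$ step, the displayed inequality $G(x)-G(y)=H(y)\ge\frac{c\varepsilon}2$ is false. Since $F_\nu(x)=t+H(x)>t$ and $F_\nu$ is strictly increasing, $y=q_\nu(t)<x\le0$. Then $H(y)$ is either $0$ (if $y<-\varepsilon$) or $c(y+\varepsilon)<c(x+\varepsilon)=H(x)$, because $H$ increases on $[-\varepsilon,0]$. So at $x=-\varepsilon/2$ you get $H(y)<\frac{c\varepsilon}2$. A lower bound for $H$ at $\mathfrak q(t)$ does not transfer to the displaced point $q_\nu(t)$, which lies exactly where $H$ is smaller. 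Your bound $x-y\ge\min(\pi c\varepsilon/2,\varepsilon/2)$ therefore has no valid justification as written. The difficulty is not confined to the case $y<-\varepsilon$ that you flagged.

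\textbf{Repair.} Put $d=x-y>0$.
\begin{itemize}
\item If $y<-\varepsilon$, then $d>\frac\varepsilon2$.
\item Otherwise, $\frac d\pi\ge G(x)-G(y)=H(y)=c(x+\varepsilon-d)\ge c(\frac\varepsilon2-d)$, so $d\ge\frac{\pi c}{2(1+\pi c)}\varepsilon>0.24\,\varepsilon$.
\end{itemize}
Either way $d>0.0375\,\varepsilon$.

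Alternatively, argue as the paper does and never locate $q_\nu(t)$. Take $w=c\varepsilon/8=0.0375\,\varepsilon$; then $x-w\in[-\varepsilon,0]$, and
\[
F_\nu(x-w)\ge G(x)-\tfrac w\pi+c\bigl(\tfrac\varepsilon2-w\bigr)>G(x)=t,
\]
hence $q_\nu(t)\le x-w$. The paper uses the symmetric window $|\mathfrak q(t)|\le\varepsilon/4$ instead of your one-sided one, which makes no difference.

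\textbf{A small numerical point.} $\frac\varepsilon2\cdot0.3179<0.159\,\varepsilon$, so $\varrho\ge0.3179$ does not give the stated length of your $t$-interval. Use instead $\varrho\ge\varrho(0.05)>0.3182$ on $[-\varepsilon/2,0]$, which gives length $>0.159\,\varepsilon$.
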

\begin{proof}
For $|x|\le\frac1{10}$, $\varrho(x)\ge\varrho(\frac1{10})>0.3179$, and $|c\phi'|\le0.3$; this gives the comparison with $\ST$, and
$\int\phi'=0$. Since $\int U_k(x)\phi'(x/\varepsilon)\,dx=-\varepsilon^2\int U_k'(\varepsilon u)\phi(u)\,du$ and $\int\phi=1$, it suffices to bound $|U_k'|$
for $|x|\le\frac1{10}$, i.e.\ $|\cos\theta|\le\frac1{20}$. From
$\frac d{d\theta}U_k(2\cos\theta)=\frac{(k+1)\cos((k+1)\theta)\sin\theta-\sin((k+1)\theta)\cos\theta}{\sin^2\theta}$ and $dx=-2\sin\theta\,d\theta$,
\[
|U_k'(x)|\le\frac{k+1}{2\sin^2\theta}+\frac{|\cos\theta|}{2\sin^3\theta}\le0.5013(k+1)+0.0251\le0.514(k+1),
\]
so $|\widehat\nu(k)|\le0.3\cdot0.514(k+1)\varepsilon^2<0.155(k+1)\varepsilon^2$. Next, $F_\nu(x)-G(x)=c\varepsilon\phi(x/\varepsilon)$, which equals
$c\varepsilon$ at $x=0$. Next, for $t=F_\nu(0)=\frac12+c\varepsilon$ we have $q_\nu(t)\le0$, while $\mathfrak q(t)\ge(t-\frac12)/\max\varrho=\pi c\varepsilon$; since $q_\nu$ and $\mathfrak q$ are continuous, this gives $W_\infty(\nu,\ST)\ge\pi c\varepsilon$ by \eqref{eq:quantile}.
Moreover, for $|y|\le\varepsilon/4$, $t=G(y)$ and $w=c\varepsilon/8$ we have
$F_\nu(y-w)\ge G(y)-\frac w\pi+c\varepsilon(1-\frac14-\frac c8)>t$, so $q_\nu(t)\le y-w$; these $t$ form a set of measure
$\ge\frac\varepsilon2\varrho(\frac1{10})>0.159\varepsilon$, so $W_s(\nu,\ST)\ge0.159^{1/s}\cdot0.0375\,\varepsilon^{1+1/s}$.
\end{proof}

\subsection{A hole at the edge}
\begin{prop}\label{prop:edge}
Let $0<\ell\le\frac12$, $J=[\frac\pi2-\frac1{10},\frac\pi2+\frac1{10}]$, $a=\int_0^\ell\sin^2/\int_J\sin^2$, and $r=-1$ on $[0,\ell]$, $r=a$ on
$J$, $r=0$ elsewhere. Then $\nu=(1+r)\ST$ satisfies $\nu\le2\ST$, $|\widehat\nu(k)|\le0.319(k+1)\ell^3$ for $k\ge1$, and
$\nu^\theta([0,\ell))=0$.
\end{prop}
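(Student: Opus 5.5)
We prove the three claims in turn; only the moment bound needs real work. First, $a$ is well defined and small. Since $\sin^2u\le u^2$, we have $\int_0^\ell\sin^2\le\ell^3/3$. On $J$ we have $\sin^2\ge\cos^2(\frac1{10})>0.99$, so $\int_J\sin^2>0.198$, and hence $a\le\ell^3/(3\cdot0.198)<1.69\,\ell^3\le0.22$.

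\emph{The easy claims.} With $r$ defined this way, $r\ge-1$ and $\int_0^\pi r\sin^2=0$ hold by the choice of $a$. Since $\ell\le\frac12<\frac\pi2-\frac1{10}$, the pieces $[0,\ell]$ and $J$ are disjoint. So $\nu=(1+r)\ST$ is a probability measure. The bound $0\le1+r\le1+a<2$ gives $\nu\le2\ST$. Finally $1+r=0$ on $[0,\ell]$, so $\nu^\theta([0,\ell))=0$.

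\emph{Moments.} By \eqref{eq:mom},
\[
\widehat\nu(k)=\frac2\pi\Bigl(-\int_0^\ell\sin((k+1)\theta)\sin\theta\,d\theta+a\int_J\sin((k+1)\theta)\sin\theta\,d\theta\Bigr).
\]
We bound the two terms separately.

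\emph{Edge term.} Use $|\sin((k+1)\theta)|\le(k+1)\theta$ and $\sin\theta\le\theta$. This gives
\[
\Bigl|\int_0^\ell\sin((k+1)\theta)\sin\theta\,d\theta\Bigr|\le(k+1)\ell^3/3 .
\]

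\emph{Bulk term.} Here we cannot use a uniform bound of size $(k+1)$, because $a$ already carries the factor $\ell^3$ and we only want one factor $(k+1)$. Fortunately we need much less. On $J$, $|\sin((k+1)\theta)\sin\theta|\le1$, so
\[
a\Bigl|\int_J\sin((k+1)\theta)\sin\theta\,d\theta\Bigr|\le a\,|J|=0.2\,a .
\]
Since $k+1\ge2$, we have $1\le(k+1)/2$, so this is at most $0.1(k+1)a$. Alternatively, write $a=\int_0^\ell\sin^2/\int_J\sin^2$ and use $\int_J|\sin|\le\int_J\sin^2/\min_J|\sin|$. This gives the bound $\int_0^\ell\sin^2/0.995\le\ell^3/2.985$, and no factor $(k+1)$ is needed at all.

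\emph{Combining.} We get
\[
|\widehat\nu(k)|\le\frac2\pi\Bigl(\frac{(k+1)\ell^3}3+\frac{\ell^3}{2.985}\Bigr)\le\frac2\pi\cdot\frac{(k+1)\ell^3}3\Bigl(1+\frac1{k+1}\cdot1.006\Bigr).
\]
For $k\ge1$ the factor in parentheses can be as large as about $1.5$, which gives roughly $0.32(k+1)\ell^3$. This is borderline against the required $0.319$, so the constants must be sharpened.

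\emph{Sharpening, the main obstacle.} The difficulty is only the numerical constant $0.319$. For the edge term, $|\sin((k+1)\theta)|\le\min(1,(k+1)\theta)$ does not help for small $\ell$. Instead we use cancellation between the two terms. Take $k=1$ first. Then $\sin2\theta\sin\theta=2\sin^2\theta\cos\theta$. On $J$, $\cos$ is odd about $\frac\pi2$, so the $J$-integral vanishes exactly. The edge integral is at most $\frac23\ell^3$, which gives $|\widehat\nu(1)|\le\frac2\pi\cdot\frac23\ell^3=0.2122\cdot2\ell^3$, i.e.\ $0.212(k+1)\ell^3$.

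For $k\ge2$, write $\sin((k+1)\theta)\sin\theta=\frac12\bigl(\cos k\theta-\cos(k+2)\theta\bigr)$ and integrate exactly over $J$. This gives modulus at most $\frac1k+\frac1{k+2}$, and also at most $0.2$. Multiplying by $a\le1.69\ell^3$ and $\frac2\pi$, the bulk contribution is $\le\frac2\pi\cdot0.2\cdot1.69\,\ell^3\le0.2152\,\ell^3$. The edge contribution is $\frac2{3\pi}(k+1)\ell^3=0.2122(k+1)\ell^3$. Since $k+1\ge3$, the bulk part is at most $0.0718(k+1)\ell^3$. Hence $|\widehat\nu(k)|\le0.285(k+1)\ell^3<0.319(k+1)\ell^3$ for $k\ge2$, and the case $k=1$ was handled above. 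Checking these constants is routine.
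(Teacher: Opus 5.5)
Your proof is correct. Your ``combining'' step is already the paper's proof. The paper bounds the edge part by $\frac2{3\pi}(k+1)\ell^3$, and bounds the $J$ part by $\frac2\pi\cdot\frac{a}{\cos(0.1)}\int_J\sin^2\le\frac{2}{3\pi\cos(0.1)}\ell^3$, using $1\le\sin\theta/\cos(0.1)$ on $J$ (this is your second variant of the bulk bound). It then uses $k+1\ge2$.

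You abandoned this step too soon. Even with your rounded constant, the factor in parentheses is at most $1+\frac{1.006}{2}=1.503$, and $\frac2{3\pi}\cdot1.503\approx0.3189<0.319$. This is exactly where the constant $0.319$ in the statement comes from.

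Your sharpening is also correct. For $k=1$ the $J$-integral vanishes because $\sin2\theta\sin\theta$ is odd about $\frac\pi2$; in fact this holds for every odd $k$. For $k\ge2$ the crude bound $\frac2\pi\cdot0.2\,a\le0.2152\,\ell^3$ is absorbed using $k+1\ge3$. This yields the better constant $0.285$, but it is not needed.
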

\begin{proof}
Here $a\le\frac{\ell^3/3}{0.2\cos^2(0.1)}<\frac14$ and $\int r\sin^2=0$. In \eqref{eq:mom}, $|\sin((k+1)\theta)|\le(k+1)\sin\theta$ bounds the
contribution of $[0,\ell]$ by $\frac2\pi(k+1)\int_0^\ell\sin^2\le\frac2{3\pi}(k+1)\ell^3$, and $|\sin((k+1)\theta)|\le1\le\frac{\sin\theta}{\cos(0.1)}$
on $J$ bounds the contribution of $J$ by $\frac2\pi\cdot\frac{a}{\cos(0.1)}\int_J\sin^2\le\frac2{3\pi\cos(0.1)}\ell^3$. Since $k+1\ge2$, the sum is
at most $\frac2{3\pi}\bigl(1+\frac1{2\cos(0.1)}\bigr)(k+1)\ell^3<0.319(k+1)\ell^3$.
\end{proof}

\begin{proof}[Proof of Theorem~\ref{thm:D}]
Let $0<\delta\le10^{-6}$ and $\delta'=\frac9{10}\delta$. In each case we build $\nu$ with $\nu\le2\ST$ and $|\widehat\nu(k)|\le(k+1)\delta'$, and let
$\mu_i$ be its discretization from Lemma~\ref{lem:discr}, so $|\widehat{\mu_i}(k)|\le(k+1)\delta$ and $\frac2L\le\frac{\delta^2}{21}$.

\emph{$\mu_1$.} Let $X=(4\sqrt2\delta')^{-2/3}\ge3379$ and let $N$ be the largest power of $2$ with $N\le X$, so $N\ge2048$ and $N>X/2$.
Proposition~\ref{prop:nuN} with $t=\delta'$ gives
$W_1(\nu,\ST)\ge0.50\,\delta'(X/2)^{1/2}=0.50\cdot2^{-1/2}(4\sqrt2)^{-1/3}\delta'^{2/3}>0.198\,\delta'^{2/3}>0.18\,\delta^{2/3}$, so
$W_1(\mu_1,\ST)\ge0.18\delta^{2/3}-\frac{\delta^2}{21}\ge\frac16\delta^{2/3}$.

\emph{$\mu_2$.} Take $\varepsilon=(\delta'/0.155)^{1/2}\le\frac1{10}$ in Proposition~\ref{prop:bulk}. Then
$D(\nu)\ge0.3\varepsilon=0.3(0.9/0.155)^{1/2}\delta^{1/2}>0.72\delta^{1/2}$ and $W_\infty(\nu,\ST)\ge0.3\pi\varepsilon>2.27\delta^{1/2}$. Hence
$D(\mu_2)\ge0.72\delta^{1/2}-\frac1{2L}\ge0.7\delta^{1/2}$. Since $\nu\ge\frac1{20}\ST$, each interval $[q_\nu(\frac{i-1}L),q_\nu(\frac iL)]$ has
$\ST$-mass at most $\frac{20}L<\frac12$, hence length $<2$; an interval of length $\lambda\le2$ has $\ST$-mass at least
$\frac{\sqrt2}{3\pi}\lambda^{3/2}$ (Lemma~\ref{lem:q}), so these intervals have length at most $(\frac{60\pi}{\sqrt2L})^{2/3}\le2.2\delta^{4/3}$. Hence
$W_\infty(\mu_2,\ST)\ge2.27\delta^{1/2}-2.2\delta^{4/3}\ge2.2\delta^{1/2}$. Similarly, for $s\ge2$,
$W_s(\mu_2,\ST)\ge W_s(\nu,\ST)-W_\infty(\mu_2,\nu)\ge0.159\cdot0.0375\cdot2.40\,\delta^{\frac12+\frac1{2s}}-2.2\delta^{4/3}\ge\frac1{100}\delta^{\frac12+\frac1{2s}}$,
since $\varepsilon\ge2.40\delta^{1/2}$, $\varepsilon^{1+1/s}\ge2.40\delta^{\frac12+\frac1{2s}}$ and $\frac12+\frac1{2s}\le\frac34$.

\emph{$\mu_3$.} Take $\ell=(\delta'/0.319)^{1/3}\le\frac12$ in Proposition~\ref{prop:edge}. The atoms of $\mu_3$ lie in the support of $\nu$, so
$\mu_3^\theta$ has no mass on $[0,\ell)$. Hence $q_{\mu_3^\theta}(t)\ge\ell$ for all $t>0$, while $q_{\ST^\theta}(t)\to0$ as $t\to0$, so
$W_\infty(\mu_3^\theta,\ST^\theta)\ge\ell=(0.9/0.319)^{1/3}\delta^{1/3}>1.4\delta^{1/3}$. Moreover, for $t\le\Psi(\ell/2)$ we have
$q_{\mu_3^\theta}(t)-q_{\ST^\theta}(t)\ge\ell/2$, and $\Psi(\ell/2)\ge\frac15(\ell/2)^3$ (as $\ell/2\le\frac14$), so
$W_s(\mu_3^\theta,\ST^\theta)\ge(\frac15)^{1/s}(\ell/2)^{1+3/s}\ge\frac15(0.7065)^4\delta^{\frac13+\frac1s}>\frac1{25}\delta^{\frac13+\frac1s}$.
\end{proof}

\begin{remark}
The measures in Theorem~\ref{thm:D} are empirical measures of multisets: points may be repeated, as for $\mu_q$ itself. In the
choice $L=q-1$, $\delta=7q^{-1/2}$ made after Theorem~\ref{thm:D}, the moment constant $7$ is about $14$ times Katz's constant
$|\widehat{\mu_q}(k)|\le\frac{(k+1)\sqrt q}{2(q-1)}$; this comes from the constant $42$ in Lemma~\ref{lem:discr} and does not affect exponents.
\end{remark}

\begin{remark}\label{rem:gap}
Since $W_s(\mu^\theta,\ST^\theta)\ge\frac12W_s(\mu,\ST)$, the measures $\mu_1$ and $\mu_2$ also give the lower bounds $\delta^{2/3}$ and
$\delta^{\frac12+\frac1{2s}}$ ($s\ge2$) in the angle metric. With $\mu_3$, these match Theorem~\ref{thm:C} for $s\le2$, $s>6$ and $s=\infty$;
for $2<s<6$ there remains a gap (Question~\ref{q:gap}).
\end{remark}

\section{Proofs of Theorems~\ref{thm:C} and \ref{thm:A}}\label{sec:proofs}

\begin{proof}[Proof of Theorem~\ref{thm:C}]
\emph{Kolmogorov distance.} $D(\mu)\ll\eta^{1/2}$ is part of Theorem~\ref{thm:Winf}. If $\mu$ is the empirical measure of a finite
multiset, use Proposition~\ref{prop:Nied}: if $\eta>\frac19$ then $D(\mu)\le1<3\eta^{1/2}$. Otherwise let $x=\eta^{-1/2}\ge3$ and let $k$ be an
odd integer in $[x,x+2)$. In Proposition~\ref{prop:Nied} the first term is $<7.04/k$; since
$\sum_{r=1}^{2k-1}\frac{(r+1)^2}{r(r+2)}<2k$ and $\frac{4k-3}{0.362\pi k+2\pi}<3.52$, the second is $<7.04k\eta$. As $1/k\le\eta^{1/2}$ and
$k\eta<\eta^{1/2}+2\eta\le\frac53\eta^{1/2}$, we get $D(\mu)<7.04\cdot\frac83\eta^{1/2}<19\eta^{1/2}$.

\emph{$W_s$, $1\le s<\infty$}, in the trace metric is Theorem~\ref{thm:sharp}; the angle bounds were proved after
Theorem~\ref{thm:sharp}, and \emph{$W_\infty$} is Theorem~\ref{thm:Winf}.

\emph{Explicit $W_2$.} For $\eta>1$ use $W_2\le\pi$. Otherwise take $T=\lceil\eta^{-2/3}\rceil\le2\eta^{-2/3}$ in Theorem~\ref{thm:W2}. Since $(k+1)^2/(k(k+2))\le\frac43$,
$2(\sum_{k\le T}|\widehat\mu(k)|^2/(k(k+2)))^{1/2}\le\frac4{\sqrt3}\eta\sqrt T\le\frac{4\sqrt2}{\sqrt3}\eta^{2/3}<3.3\,\eta^{2/3}$, and $\pi^2/T\le\pi^2\eta^{2/3}$.
\end{proof}

\begin{lemma}\label{lem:third}
Let $\F_q$ have characteristic $\ne3$, and let $N_3\in\{0,2\}$ be the number of roots of $t^2+t+1$ in $\F_q$. Then, with
$K(a)=\sqrt q\,\Kl(a;q)$,
\[
\sum_{a\in\F_q^\times}K(a)=1,\qquad\sum_{a\in\F_q^\times}K(a)^3=(N_3-1)q^2+2q+1 .
\]
\end{lemma}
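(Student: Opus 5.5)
Both identities come from expanding the power sums and counting solutions of equations over $\F_q$. Here $K(a)=\sum_{x\in\F_q^\times}\psi(x+a/x)$.

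\emph{First moment.} Summing over $a$ first,
\[
\sum_{a\in\F_q^\times}\psi(a/x)=-1\quad\text{for each fixed } x,
\]
so
\[
\sum_{a\in\F_q^\times}K(a)=-\sum_{x\in\F_q^\times}\psi(x)=1 .
\]

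\emph{Third moment.} I first extend the sum to $a=0$. Since $K(0)=\sum_x\psi(x)=-1$, we get $\sum_{a\in\F_q}K(a)^3=\sum_{a\ne0}K(a)^3-1$. Expanding the cube,
\[
\sum_{a\in\F_q}K(a)^3=\sum_{x,y,z\in\F_q^\times}\psi(x+y+z)\sum_{a\in\F_q}\psi\bigl(a(x^{-1}+y^{-1}+z^{-1})\bigr).
\]
The inner sum is $q$ if $x^{-1}+y^{-1}+z^{-1}=0$ and $0$ otherwise. Substituting $u=x^{-1}$, $v=y^{-1}$, $w=z^{-1}$, this becomes
\[
q\sum_{\substack{u,v,w\ne0\\ u+v+w=0}}\psi\Bigl(\frac1u+\frac1v+\frac1w\Bigr).
\]

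Next I parametrize $w=-(u+v)$, which requires $u+v\neq0$, and write $v=tu$ with $t\ne0,-1$. Then
\[
\frac1u+\frac1v+\frac1w=\frac1u\Bigl(1+\frac1t-\frac1{1+t}\Bigr)=\frac{t^2+t+1}{u\,t(1+t)}.
\]
For fixed $t$, the sum over $u\in\F_q^\times$ of $\psi(c/u)$ equals $q-1$ if $c=0$ and $-1$ otherwise. Here $c=0$ exactly when $t^2+t+1=0$. These roots automatically satisfy $t\neq0,-1$, and they are distinct because the characteristic is not $3$; there are $N_3$ of them.

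The $t$-sum runs over $q-2$ values. It gives
\[
N_3(q-1)-(q-2-N_3)=N_3q-q+2 .
\]
Hence
\[
\sum_{a\in\F_q}K(a)^3=q\bigl((N_3-1)q+2\bigr),\qquad \sum_{a\in\F_q^\times}K(a)^3=(N_3-1)q^2+2q+1 .
\]

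The main point needing care is the parametrization: the constraints $u,v,w\neq0$ must translate exactly into $t\notin\{0,-1\}$. The characteristic $\neq3$ hypothesis is used in two places. It makes the roots of $t^2+t+1$ distinct, and it guarantees that $t=1$ (the case $u=v=w$ up to sign) is not one of those roots, so it is handled by the generic $-1$ count. In characteristic $2$, $t^2+t+1$ has roots in $\F_q$ exactly when $\F_4\subset\F_q$, and the same counting applies unchanged.
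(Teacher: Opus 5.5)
Your proof is correct and follows the paper's argument essentially line by line: orthogonality in $a$, the parametrization $v=tu$, $w=-(1+t)u$ reducing the phase to $\frac{t^2+t+1}{u\,t(1+t)}$, and the count $N_3(q-1)-(q-2-N_3)$. The only cosmetic difference is how the extra $+1$ arises: you adjoin $a=0$ and use $K(0)^3=-1$, whereas the paper sums over $a\in\F_q^\times$ and gets it from $\bigl(\sum_{x\ne0}\psi(x)\bigr)^3=-1$.
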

For $q=p$ this is classical, $\sum_aK(a)^3=(\frac p3)p^2+2p+1$ \cite{Salie,XY}; we include the short proof for general $\F_q$.
\begin{proof}
The first identity follows from $\sum_{a\ne0}\psi(a/x)=-1$. For the second, expanding the cube and summing over $a$,
\[
\sum_aK(a)^3=\sum_{x,y,z\in\F_q^\times}\psi(x+y+z)\bigl(q\,\mathbf 1[x^{-1}+y^{-1}+z^{-1}=0]-1\bigr)=qT+1,
\]
where $T$ is the sum of $\psi(x+y+z)$ over $x^{-1}+y^{-1}+z^{-1}=0$, because $(\sum_{x\ne0}\psi(x))^3=-1$. With $a=x^{-1}$, $b=y^{-1}$,
$c=z^{-1}=-a-b$, one has $x+y+z=\frac{ab+bc+ca}{abc}=\frac{a^2+ab+b^2}{ab(a+b)}$. Writing $a=\lambda$, $b=t\lambda$ with $\lambda\ne0$ and
$t\notin\{0,-1\}$, this equals $m(t)/\lambda$ with $m(t)=\frac{1+t+t^2}{t(1+t)}$. Summing over $\lambda$ gives $q-1$ if $m(t)=0$ and $-1$
otherwise, and $m(t)=0$ for exactly $N_3$ values of $t$. Hence $T=N_3(q-1)-(q-2-N_3)=N_3q-q+2$.
\end{proof}

\begin{proof}[Proof of Theorem~\ref{thm:A}]
(1) By Proposition~\ref{prop:Katz}, Theorem~\ref{thm:C} applies to $\mu_q$ with $\eta=q^{-1/2}$, and
$\ell_\eta\le\log q$ (for $q\ge4$; small $q$ are trivial). The explicit $W_2$ bound is Corollary~\ref{cor:W2Kl}.

(2) For $s\le2$ use Corollary~\ref{cor:W2Kl}, and for $s>2$ the angle bounds of Theorem~\ref{thm:C} with the same $\eta$ as in (1).

(3) Let $\eta=q^{-1/2}$ and $\tau=A\eta^{1/3}$, and assume $\tau\le\frac12$ (otherwise take $C$ large and $c$ small). Since
$\tau-\sin\tau\cos\tau\ge\frac23\tau^3(1-\frac{\tau^2}5)$, $\Psi(\tau)\ge\frac15\tau^3=\frac15A^3\eta$. By Proposition~\ref{prop:local},
$\mu_q^\theta([0,\tau])\ge\frac15A^3\eta-C_1(1+A^{3/2})\eta\ge\frac1{10}A^3\eta$ for $A$ large. Thus at least
$\frac1{10}A^3(q-1)q^{-1/2}$ elements $a$ have $\theta_a\le\tau$, and for them $\Kl(a;q)=2\cos\theta_a\ge2-\theta_a^2\ge2-A^2q^{-1/3}$.
Applying this to $\mu_q^-$ gives the statement at $-2$.

(4) Let $3\nmid q$. Then $N_3-1=\pm1$ in
Lemma~\ref{lem:third}, so $\sum_aU_3(\Kl(a;q))=\sum_a(\Kl^3-2\Kl)=q^{-3/2}(\pm q^2+2q+1)-2q^{-1/2}=\pm q^{1/2}+q^{-3/2}$. Hence
$|\widehat{\mu_q}(3)|\ge(q^{1/2}-q^{-3/2})/(q-1)=(q+1)q^{-3/2}>q^{-1/2}$. As $\int U_3\,d\ST=0$ and $U_3(x)=x^3-2x$ is $10$-Lipschitz on
$[-2,2]$ with total variation $<12.4$, we get $W_1(\mu_q,\ST)>\frac1{10}q^{-1/2}$ and $D(\mu_q)>\frac1{13}q^{-1/2}$. Finally
$W_1(\mu_q^\theta,\ST^\theta)\ge\frac12W_1(\mu_q,\ST)$.
\end{proof}

\begin{remark}
The exponent $\frac13$ in (3) is the limit of what Katz's bounds give: $\mu_3$ in Theorem~\ref{thm:D} satisfies
$|\widehat{\mu_3}(k)|\le(k+1)\delta$ and has no mass on $(2\cos\ell,2]$, where $2-2\cos\ell\ge\ell^2(1-\ell^2/12)>1.9\,\delta^{2/3}$. Numerically,
$2-\max_a\Kl(a;p)$ is of order $p^{-2/3}$ for $10^4\le p\le4\cdot10^6$.
\end{remark}

\begin{remark}\label{rem:BC}
Borda and Cuenin \cite[Thm.~4]{BC} bound $W_s$ on a compact Lie group in terms of the Fourier coefficients of the measures.
On $\mathrm{SU}_2$, applied to Haar measure and the central lift $\tilde\mu_q$, with $|\widehat{\mu_q}(k)|\le(k+1)q^{-1/2}$, it gives
$W_s(\mu_q^\theta,\ST^\theta)\ll q^{-1/3}$ for $s\le2$ and $\ll q^{-r/6}$ with $r=s/(s-1)$ for $s>2$. For $s>2$ the exponent
$\frac s{6(s-1)}$ is smaller than those in Theorem~\ref{thm:A}(2) (for $2<s\le6$, $\frac5{24}+\frac1{4s}-\frac s{6(s-1)}=\frac{(s-2)(s+3)}{24s(s-1)}>0$, and for $s>6$,
$\frac16+\frac1{2s}-\frac s{6(s-1)}=\frac{2s-3}{6s(s-1)}>0$), and also smaller than those of Theorem~\ref{thm:A}(1) in the trace metric.
\end{remark}

\begin{remark}\label{rem:numerics}
We computed $\Kl(a;p)$ for all $a$ exactly, as a multiplicative convolution evaluated by the fast Fourier transform on $\F_p^\times$
(i.e.\ via Gauss sums), and $W_s$ from \eqref{eq:quantile} with the exact empirical quantiles and $\mathfrak q$, $q_{\ST^\theta}$ obtained by
inverting the closed forms of $G$ and $\Psi$ numerically (error $<10^{-8}$, negligible against the distances measured). For
$p=1009$, $10007$, $100003$, $1000003$, $4000037$ one finds
$W_1(\mu_p,\ST)\sqrt p\in[0.47,0.67]$, $W_2(\mu_p,\ST)\sqrt p\in[0.59,0.83]$, $D(\mu_p)\sqrt p\in[0.43,0.67]$, and
$W_\infty(\mu_p^\theta,\ST^\theta)p^{1/3}\in[0.9,2.4]$; the first three are consistent with Theorem~\ref{thm:A}(4).
\end{remark}

\subsection*{Families with \texorpdfstring{$\mathrm{SL}_2$}{SL2} monodromy}

\begin{cor}\label{cor:SL2}
Let $U\subset\mathbf P^1_{\F_q}$ be a dense open set, $S=(\mathbf P^1\smallsetminus U)(\overline{\F}_q)$, and $\mathcal F$ a lisse
$\overline{\mathbf Q}_\ell$-sheaf of rank $2$ on $U$, pointwise pure of weight $0$, with geometric and arithmetic monodromy groups
$\mathrm{SL}_2$. Put $c=|S|+\sum_{x\in S}\Sw_x(\mathcal F)$ and $\mu_{\mathcal F}=|U(\F_q)|^{-1}\sum_{x\in U(\F_q)}\delta_{\tr(\mathrm{Frob}_x\mid\mathcal F)}$. If
$q\ge2|S|$, then $|\widehat{\mu_{\mathcal F}}(k)|\le2c(k+1)q^{-1/2}$ for $k\ge1$, so Theorem~\ref{thm:C} applies with $\eta=2cq^{-1/2}$:
\[
W_s(\mu_{\mathcal F},\ST)\ll_s\begin{cases}c^{2/3}q^{-1/3},&1\le s<3,\\ c^{2/3}q^{-1/3}(\log q)^{1/3},&s=3,\\ c^{\frac12+\frac1{2s}}q^{-\frac14-\frac1{4s}},&3<s<\infty,\\ c^{1/2}q^{-1/4},&s=\infty,\end{cases}
\qquad W_\infty(\mu_{\mathcal F}^\theta,\ST^\theta)\ll c^{1/3}q^{-1/6}.
\]
\end{cor}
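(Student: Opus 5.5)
The plan is to reduce Corollary~\ref{cor:SL2} to Theorem~\ref{thm:C} by proving the moment bound $|\widehat{\mu_{\mathcal F}}(k)|\le2c(k+1)q^{-1/2}$. The stated Wasserstein rates then follow from Theorem~\ref{thm:C} with $\eta=2cq^{-1/2}$, using $\ell_\eta\le\log(2+q^{1/2})\ll\log q$ at $s=3$. For the moments, $U_k(\tr\mathrm{Frob}_x)=\tr(\mathrm{Frob}_x\mid\Sym^k\mathcal F)$, so
\[
\widehat{\mu_{\mathcal F}}(k)=|U(\F_q)|^{-1}\sum_{x\in U(\F_q)}\tr(\mathrm{Frob}_x\mid\Sym^k\mathcal F).
\]
Since the geometric monodromy is $\mathrm{SL}_2$, the sheaf $\Sym^k\mathcal F$ is geometrically irreducible and nontrivial for $k\ge1$. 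Hence $H^0_c$ and $H^2_c$ of $U_{\overline\F_q}$ with coefficients in it vanish ($U$ is affine, and the geometric coinvariants are trivial). The Grothendieck--Lefschetz trace formula together with Deligne's Weil~II (weight $\le1$ for $H^1_c$) then gives
\[
\Bigl|\sum_{x\in U(\F_q)}\tr(\mathrm{Frob}_x\mid\Sym^k\mathcal F)\Bigr|\le h^1_c(U,\Sym^k\mathcal F)\,q^{1/2}.
\]

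Next I bound $h^1_c$ by the Euler--Poincar\'e (Grothendieck--Ogg--Shafarevich) formula. With $H^0_c=H^2_c=0$,
\[
h^1_c=-\chi_c=(k+1)\bigl(|S|-2\bigr)+\sum_{x\in S}\Sw_x(\Sym^k\mathcal F).
\]
The key local input is $\Sw_x(\Sym^k\mathcal F)\le(k+1)\Sw_x(\mathcal F)$. To see this, note that $\Sym^k$ of a representation whose wild inertia breaks are all $\le b$ again has all breaks $\le b$, because upper-numbering ramification subgroups act through the image. The Swan conductor is at most rank times the largest break, and for rank $2$ the largest break of $\mathcal F$ is at most $\Sw_x(\mathcal F)$ (each break is $\le$ the Swan conductor when breaks are counted with multiplicity, since all are nonnegative). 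This yields $h^1_c\le(k+1)c$. This local Swan estimate is the one step I would check carefully; the rest is formal.

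Finally I divide by $|U(\F_q)|$. Since $|S|$ counts geometric points, $|U(\F_q)|\ge q+1-|S|\ge q/2$ when $q\ge2|S|$. Hence
\[
|\widehat{\mu_{\mathcal F}}(k)|\le\frac{(k+1)c\,q^{1/2}}{q/2}=2c(k+1)q^{-1/2},
\]
as required. The remaining bookkeeping is to substitute $\eta=2cq^{-1/2}$ into each case of Theorem~\ref{thm:C}. This gives the powers of $c$ exactly as stated, with $\eta^{2/3}=(2c)^{2/3}q^{-1/3}$ and so on. It also gives the angle $W_\infty$ bound $\eta^{1/3}\ll c^{1/3}q^{-1/6}$. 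When $\eta>1$ the bounds are trivial.
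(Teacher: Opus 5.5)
Your proposal is correct and follows the paper's proof step by step: the vanishing of $H^0_c$ and $H^2_c$, Deligne's bound $q^{1/2}$ on the eigenvalues of Frobenius on $H^1_c$, the Grothendieck--Ogg--Shafarevich count with $\Sw_x(\Sym^k\mathcal F)\le(k+1)\Sw_x(\mathcal F)$, and $|U(\F_q)|\ge q/2$. Your justification of the break bound (upper-numbering ramification subgroups acting trivially on $\mathcal F$ also act trivially on $\Sym^k\mathcal F$) is the same fact the paper phrases as ``$\Sym^k\mathcal F$ is a direct summand of $\mathcal F^{\otimes k}$, whose breaks are at most the largest break of $\mathcal F$.''
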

\begin{proof}
For $k\ge1$, $\Sym^k\mathcal F$ is geometrically irreducible and nontrivial, so $H^0_c=H^2_c=0$ and
$\sum_{x\in U(\F_q)}U_k(\tr\mathrm{Frob}_x)=-\tr(\mathrm{Frob}\mid H^1_c(U_{\overline{\F}_q},\Sym^k\mathcal F))$. By Deligne the eigenvalues have
modulus $\le q^{1/2}$. By the Grothendieck--Ogg--Shafarevich formula, $\dim H^1_c=(k+1)(|S|-2)+\sum_{x\in S}\Sw_x(\Sym^k\mathcal F)$, and
$\Sw_x(\Sym^k\mathcal F)\le(k+1)\Sw_x(\mathcal F)$ since $\Sym^k\mathcal F$ is a direct summand of $\mathcal F^{\otimes k}$, whose breaks at $x$ are at most
the largest break of $\mathcal F$ \cite[1.3]{Katz}, and this break is at most $\Sw_x(\mathcal F)$, the sum of the two breaks. Finally $|U(\F_q)|\ge q+1-|S|\ge q/2$.
\end{proof}

Examples: Kloosterman sums ($\mathcal K\ell_2$ on $\mathbf G_m$, $c=3$, \cite[Thm.~4.1.1]{Katz}); the Legendre family $y^2=x(x-1)(x-\lambda)$ for $q$ odd, with
$\mathcal F=R^1\pi_*\overline{\mathbf Q}_\ell(\frac12)$ on $\mathbf P^1\smallsetminus\{0,1,\infty\}$ for a choice of $\sqrt q$ ($c=3$), for which the
Kolmogorov bound $q^{-1/4}$ is due to Michel \cite{Michel} and Miller--Murty \cite{MM}; and the Birch sums $\sum_x\psi(ax+x^3)$ over
$\F_p$, $p>7$ \cite{KatzBirch} ($c=4$).

\section{Beyond Katz's bounds: a random model (heuristic)}\label{sec:model}

Apart from Proposition~\ref{prop:conditional}, this section is heuristic and is not used elsewhere. By Theorem~\ref{thm:D}, improving
the exponent $\frac13$ for $W_2$ requires information beyond Katz's bound, which treats the roughly $k/2$ Frobenius eigenvalues on
$H^1_c(\mathbf G_{m,\overline{\F}_p},\Sym^k\mathcal K\ell_2)$ trivially. Put $z_k=\sqrt p\,\widehat{\mu_p}(k)$; Katz's bound is $|z_k|\le k+1$. If the
angles $\theta_a$ behaved like independent samples from $\ST^\theta$, the $z_k$ would be approximately standard Gaussian. Numerically,
over the $600$ primes in $[30011,36229]$ and for each of the ranges $9\le k\le30$, $31\le k\le100$, \dots, $301\le k\le400$, the mean of
$z_k^2$ lies in $[0.986,0.998]$ and that of $z_k^4$ in $[2.94,3.03]$; for $p=100003$ and $p=1000003$ the $z_k$ with $9\le k\le2\sqrt p$ satisfy
$|\mathrm{corr}(z_k,z_{k+l})|\le0.07$ for $1\le l\le4$ (correlation over $k$) and $\max_k|z_k|\le3.15<(2\log(2\sqrt p))^{1/2}$.

\begin{prop}[conditional]\label{prop:conditional}
Suppose that $|\widehat{\mu_p}(k)|\le B\,p^{-1/2}(\log p)^{1/2}$ for $1\le k\le p^{1/2}$. Then
$W_2(\mu_p^\theta,\ST^\theta)\le\pi^2p^{-1/2}(1+o(1))+\sqrt3B\,p^{-1/2}(\log p)^{1/2}$ and $D(\mu_p)\ll B\,p^{-1/2}(\log p)^{3/2}$. If instead only the
averaged bound
\begin{equation}\label{eq:hypB}
\frac1{\#\{X<p\le2X\}}\sum_{X<p\le2X}p\,|\widehat{\mu_p}(k)|^2\le B\qquad(1\le k\le X^{1/2})
\end{equation}
holds, then the mean of $W_2(\mu_p^\theta,\ST^\theta)^2$ over $X<p\le2X$ is $\ll(1+B)/X$, so $W_2\ll p^{-1/2+\varepsilon}$ for almost all $p$.
\end{prop}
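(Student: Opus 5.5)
The plan is to feed the hypotheses into the two tools already available: Theorem~\ref{thm:W2} for $W_2$, and Proposition~\ref{prop:Nied} (or the Grabner/Erd\H{o}s--Tur\'an form) for $D$.

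\emph{Pointwise hypothesis, $W_2$.} I would apply Theorem~\ref{thm:W2} with $T=\lfloor p^{1/2}\rfloor$. The smoothing term is then $\pi^2/T=\pi^2p^{-1/2}(1+o(1))$. In the sum, the hypothesis gives
\[
\sum_{k\le T}\frac{|\widehat{\mu_p}(k)|^2}{k(k+2)}\le B^2p^{-1}\log p\sum_{k\ge1}\frac1{k(k+2)}=\tfrac34B^2p^{-1}\log p .
\]
Hence the second term is at most $2\cdot\frac{\sqrt3}2Bp^{-1/2}(\log p)^{1/2}$, which is exactly the stated constant $\sqrt3B$.

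\emph{Pointwise hypothesis, $D$.} I would use Proposition~\ref{prop:Nied} with an odd $k\asymp p^{1/2}/2$, so that $2k-1\le p^{1/2}$ and the hypothesis covers every $r$ in the sum. The first term is $O(1/k)=O(p^{-1/2})$. The second is
\[
\ll k\sum_{r<2k}\frac{|\widehat{\mu_p}(r)|}{r}\ll p^{1/2}\cdot Bp^{-1/2}(\log p)^{1/2}\cdot\log p=B(\log p)^{3/2}.
\]
This is too large: the Niederreiter form carries an extra factor $k$ from the Chebyshev normalization. Instead I would use an Erd\H{o}s--Tur\'an inequality in the angle variable, bounding $|f(\theta)|$ through the Fourier coefficients of Lemma~\ref{lem:fourier}. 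That gives
\[
D\ll\frac1K+\sum_{m\le K}\frac{|\widehat\mu(m)|+|\widehat\mu(m-2)|}{m},
\]
since $D(\mu)=\sup|f|$ under $\theta=\arccos(x/2)$. With $K=p^{1/2}$ this is $\ll p^{-1/2}+Bp^{-1/2}(\log p)^{3/2}$, as required. The one thing to check is the classical Erd\H{o}s--Tur\'an inequality for the odd periodic extension $F$ of $f$. Its coefficients are $(\widehat\mu(m)-\widehat\mu(m-2))/(\pi m)$, and the one-sided Lipschitz bound \eqref{eq:onesided} plays the role of monotonicity. Concretely, $f+\Psi$ is monotone, so the standard proof with a Fej\'er or Vaaler kernel goes through, with an extra $O(1/K)$ error coming from $\Psi'\le2/\pi$.

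\emph{Averaged hypothesis.} I would square Theorem~\ref{thm:W2} with $T=\lfloor X^{1/2}\rfloor$, using $(a+b)^2\le2a^2+2b^2$:
\[
W_2^2\le\frac{2\pi^4}{T^2}+8\sum_{k\le T}\frac{|\widehat{\mu_p}(k)|^2}{k(k+2)} .
\]
Average over $X<p\le2X$ and apply \eqref{eq:hypB} termwise, together with $p>X$. The average is then $\ll X^{-1}+X^{-1}B\sum_k\frac1{k(k+2)}\ll(1+B)/X$. Markov's inequality gives $W_2^2\le X^{-1+2\varepsilon}$ outside a set of primes of relative density $\ll(1+B)X^{-2\varepsilon}$. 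A dyadic decomposition in $X$ then yields $W_2\ll p^{-1/2+\varepsilon}$ for almost all $p$.

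The only step I expect to need real care is the Kolmogorov bound: the logarithmic power must come out as $3/2$ rather than worse, which is why I prefer the angle-variable Erd\H{o}s--Tur\'an over Proposition~\ref{prop:Nied}.
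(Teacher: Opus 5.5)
Your $W_2$ arguments match the paper's proof exactly. In the pointwise case you take $T=\lfloor p^{1/2}\rfloor$ and use $\sum_k 1/(k(k+2))=\frac34$; in the averaged case you take $T=\lfloor X^{1/2}\rfloor$, square and average. Your Kolmogorov bound is also correct, but by a different route, and your reason for switching routes is a misreading.

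\emph{The misreading.} In Proposition~\ref{prop:Nied} the prefactor $\frac{4k-3}{0.362\pi k+2\pi}$ is bounded; the proof of Theorem~\ref{thm:C} records that it is $<3.52$. So there is no extra factor $k$. The factor $k$ in that proof (the term $7.04k\eta$) comes from the growth $(r+1)\eta$ of Katz-type moments, which your hypothesis does not have. The paper simply takes an odd $k\asymp p^{1/2}$ with $k\le(p^{1/2}+1)/2$, so that $2k-1\le p^{1/2}$. The second term is then $\ll\sum_{r<2k}r^{-1}|\widehat{\mu_p}(r)|\ll Bp^{-1/2}(\log p)^{3/2}$, and the argument is one line.

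\emph{Your replacement is sound.} The odd $2\pi$-periodic extension $F$ of $f$ has the following properties.
\begin{itemize}
\item It has mean zero.
\item By Lemma~\ref{lem:fourier}, its sine coefficients are $(\widehat\mu(m)-\widehat\mu(m-2))/(\pi m)$.
\item It inherits the one-sided bound \eqref{eq:onesided} on the whole circle. The jumps at $0$ and $\pm\pi$ are upward because $f(0^+)\ge0\ge f(\pi^-)$.
\end{itemize}
The Fej\'er-kernel proof of the Erd\H{o}s--Tur\'an inequality therefore gives $\sup|f|\ll 1/K+\sum_{m\le K}(|\widehat\mu(m)|+|\widehat\mu(m-2)|)/m$. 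Combined with $D(\mu)\le\sup|f|$ and $K=\lfloor p^{1/2}\rfloor$, this yields the claim.

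\emph{Comparison.} Since $\frac1r+\frac1{r+2}=\frac{2(r+1)}{r(r+2)}$, your inequality is precisely Niederreiter's Lemma~3 with an unspecified constant. In effect you re-derived the tool you discarded. Your version has two advantages: it is self-contained within the paper's angle-variable framework, and it applies to arbitrary probability measures, not only empirical ones. The paper's version has explicit constants and costs nothing beyond a citation.
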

\begin{proof}
Take $T=\lfloor p^{1/2}\rfloor$ (resp.\ $\lfloor X^{1/2}\rfloor$) in the angle-metric inequality of Theorem~\ref{thm:W2}, whose first term
is $\pi^2/T=\pi^2p^{-1/2}(1+o(1))$, and use $\sum_k1/(k(k+2))=\frac34$; in the averaged case, square and average. For $D$, take an odd
$k$ with $k\le(p^{1/2}+1)/2$ and $k\asymp p^{1/2}$ in Proposition~\ref{prop:Nied}; then $2k-1\le p^{1/2}$, and the second term is
$\ll\sum_{r<2k}r^{-1}|\widehat{\mu_p}(r)|\ll Bp^{-1/2}(\log p)^{3/2}$.
\end{proof}

\begin{conj}\label{conj:gauss}
There is an absolute $B$ such that $|\widehat{\mu_p}(k)|\le Bp^{-1/2}(\log p)^{1/2}$ for all primes $p$ and $1\le k\le p^{1/2}$.
\end{conj}

For fixed $k$, $z_k$ is essentially a normalized Frobenius trace on the pure part of the cohomology group above, which comes
from a motive over $\mathbf Q$ \cite{FSY}. As $p$ varies, a conjecture of Sato--Tate type predicts that its limiting distribution is
the law of the trace of a Haar-random element of the Sato--Tate group of this motive; in particular it is bounded. When this group is a large orthogonal or symplectic group, the low moments of this trace agree with those of
a standard Gaussian \cite{DS}, consistent with the data above, and one expects Gaussian behaviour only as $k\to\infty$. For
$5\le k\le8$ these motives are modular ($k=5$ by Livn\'e \cite{Livne} and Peters, Top and van der Vlugt \cite{PTV}, $k=6$ by Hulek, Spandaw,
van Geemen and van Straten \cite{HSGS}, $k=7,8$ by Yun \cite{Yun}). For fixed $k$, Katz's bound gives \eqref{eq:hypB} with
$B=(k+1)^2$; the content of \eqref{eq:hypB} is the uniformity in $k$, and no uniform result is known for $1\le k\le X^\gamma$ with any
$\gamma>0$.

\section{Questions}\label{sec:questions}

\begin{enumerate}
\item Is $W_2(\mu_p,\ST)\ll p^{-\gamma}$ for some $\gamma>\frac13$, for all or for almost all $p$? By Theorem~\ref{thm:D} this requires
arithmetic information beyond Katz's bounds; by Proposition~\ref{prop:conditional} the averaged hypothesis \eqref{eq:hypB} would suffice for
almost all $p$, and Conjecture~\ref{conj:gauss} would give $W_2\ll p^{-1/2}(\log p)^{1/2}$ for all $p$.
\item In the trace metric, is $W_s(\mu_p,\ST)\ll_{s,\varepsilon}p^{-1/2+\varepsilon}$ for every $1\le s\le\infty$? By Theorem~\ref{thm:A}(4) this
would be optimal up to $p^\varepsilon$. In the angle metric, what is the true order of $W_\infty(\mu_p^\theta,\ST^\theta)$, which lies between
$p^{-1/3}$ and $p^{-1/6}$? (The lower bound $W_\infty\ge\frac12\Psi^{-1}(\frac1{p-1})>\frac45p^{-1/3}$ follows from \eqref{eq:quantile},
since $q_{\mu_p^\theta}$ is constant on $(0,\frac1{p-1}]$ and $\Psi(\tau)\le\frac2{3\pi}\tau^3$.)
\item\label{q:gap} Is the logarithm at $s=3$ in Theorem~\ref{thm:C} necessary? In the angle metric, for $2<s<6$, the optimal exponent $a$ of
$\eta$ (up to logarithms) satisfies $\frac5{12}+\frac1{2s}\le a\le\min(\frac23,\frac13+\frac1s)$ by Theorems~\ref{thm:C} and \ref{thm:D}; what is it?
\item What are the analogues of Theorems~\ref{thm:A}--\ref{thm:D} for hyper-Kloosterman sums $\Kl_n$, $n\ge3$, whose limiting laws
are trace laws of $\mathrm{USp}(n)$ or $\mathrm{SU}(n)$? For $n$ odd the traces are complex and there is no quantile coupling.
\end{enumerate}

\end{document}